\numberwithin{equation}{section}
\numberwithin{figure}{section}
\theoremstyle{plain}
\newtheorem{thm}{\protect\theoremname}
  \theoremstyle{definition}
  \numberwithin{thm}{section}
  \newtheorem{defn}[thm]{\protect\definitionname}
  \theoremstyle{plain}
  \newtheorem{prop}[thm]{\protect\propositionname}
  \theoremstyle{definition}
  \newtheorem{example}[thm]{\protect\examplename}
  \newtheorem{rem}[thm]{\protect\remarkname}
\newtheorem*{thm*}{\protect\theoremname}
  \theoremstyle{definition}
\title[Random attractors for locally monotone SPDE]{{Random attractors for locally monotone stochastic partial differential equations}}
\author[B. Gess]{Benjamin Gess}
\email{benjamin.gess@mis.mpg.de}
\address{ Fakult\"at f\"ur Mathematik, Universit\"at Bielefeld, 33615 Bielefeld, Germany, \\
Max Planck Institute for Mathematics in the Sciences, 04103 Leipzig, Germany}
\author[W. Liu]{Wei Liu}
\email{weiliu@jsnu.edu.cn}
\address{School of Mathematics and Statistics, Jiangsu Normal
University,  221116 Xuzhou,  China }
\author[A. Schenke]{Andre Schenke}
\email{aschenke@uni-bielefeld.de}
\address{Fakult\"at f\"ur Mathematik, Universit\"at Bielefeld, 33615 Bielefeld, Germany}
\keywords{Random dynamical systems, random attractors, 2D Navier-Stokes equations, Burgers equation,
Leray-$\alpha$ model, non-Newtonian fluids, Ladyzhenskaya model, Cahn-Hilliard equation, Kuramoto-Sivashinsky equation, Ornstein-Uhlenbeck processes.}
\subjclass[2010]{37L55, 60H15; 35Q35, 47H05, 35G31}
  \providecommand{\definitionname}{Definition}
  \providecommand{\examplename}{Example}
  \providecommand{\propositionname}{Proposition}
  \providecommand{\theoremname}{Theorem}
  \providecommand{\remarkname}{Remark}
\begin{document}

\begin{abstract}
We prove the existence of random dynamical systems and random attractors for a large class of locally monotone stochastic partial differential equations perturbed
by additive L\'{e}vy noise. The main result is applicable to various types of SPDE such as stochastic Burgers type equations, stochastic 2D Navier-Stokes
equations, the stochastic 3D Leray-$\alpha$ model, stochastic power law fluids, the stochastic Ladyzhenskaya model, stochastic Cahn-Hilliard type equations, stochastic Kuramoto-Sivashinsky type equations, stochastic porous media equations and stochastic $p$-Laplace equations.

\end{abstract}
\maketitle
%\com{- Introduction (highlight differences to JDDE,Role of OU construction)\\
%- In the end: shorten drastically; construction of strictly stationary version - shorten whatever already in JDDE paper; shorten examples - whatever in
%locally monotone papers\\
%- extensions: what is NOT covered? Further examples?\\
%- References in Examples, where done before?
%- Comment on possible extension to time-inhomogeneous setting}

\section{Introduction}
%
%\com{To be put in intro:\\
%1. Motivation for trace-class noise (ergodicity - singleton RA, example: Burgers\\
%2. Levy noise and RA references}

Since the foundational work in \cite{CF94,CDF97,S92} the long time behavior of SPDE in terms of the existence of random attractors has been extensively investigated (cf.\ e.g.\ \cite{BLL06, CSY15, Fan04, FGS17, Gess14, G13, G13-4, GLR11, BGLR10, GL17, GT16, GW18, KS04, LG08, LY16b, STW19, Wang17, YY14, Zhao17, Zhao17b, ZZ17}), resulting in an ever increasing list of specific SPDE for which the existence of a random attractor has been verified. While the proofs rely on common ideas, the field yet lacks a general, unifying framework overcoming the case by case verification. The main aim of this work is to further push in the direction of such a unifying framework by providing a general, abstract result on the existence of random attractors for locally monotone SPDE. 

More precisely, we prove the existence of random dynamical systems and random attractors for SPDE of the form
\begin{equation}
dX_{t}=A(X_{t})dt+dN_{t},\label{eqn:SPDE}
\end{equation}
where $N_{t}$ is a L\'{e}vy type noise satisfying a moment condition $(N)$ and $A$ is locally monotone (cf.\ $(A2)$ below) with respect to a Gelfand triple $V\subseteq H \subseteq V^*$. The abstract framework introduced here relies on the concept of \textit{locally} monotone operators. This extends previously available results, which were restricted to monotone operators, and constitutes important progress in so far that, in contrast to the monotone framework, it includes SPDE arising in fluid dynamics as particular examples. Indeed, the generality of this framework is demonstrated by application to a large class of SPDE, including, stochastic reaction-diffusion equations, stochastic Burgers type equations, stochastic 2D Navier-Stokes equations, the stochastic Leray-$\alpha$ model, stochastic power law fluids, the stochastic Ladyzhenskaya model, stochastic Cahn-Hilliard type equations as well as stochastic Kuramoto-Sivashinsky type equations. This recovers results from the literature as simple applications of the abstract framework introduced here and generalizes many known results. In particular, we generalize the results given in \cite{BGLR10,GLR11,G13}. We refer to Section 6 for more details.

The first main result, stated in detail in Theorem \ref{thm:generation} below, addresses the existence of random dynamical systems associated to \eqref{eqn:SPDE}.
\begin{thm*}[Theorem \ref{thm:generation} below]
Assume that $A$ is hemicontinuous, locally monotone, coercive and satisfies a growth condition. Further assume that $V \subseteq H$ is compact and that there exists a hemicontinuous, strictly monotone operator $M:V\to V^*$ satisfying a growth condition. Then there is a continuous random dynamical system $S$ generated by solutions to \eqref{eqn:SPDE}.
\end{thm*}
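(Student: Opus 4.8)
The plan is to follow the pathwise (``random PDE'') strategy: transform \eqref{eqn:SPDE} into a random evolution equation without stochastic integral by subtracting a stationary reference process, solve the resulting equation $\omega$-wise with the variational theory for locally monotone operators, and assemble the solution maps into a cocycle over the driving metric dynamical system $\theta = (\theta_t)_{t \in \mathbb{R}}$ associated with the noise $N$.

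\textbf{Step 1 (stationary reference process).} First I would use the auxiliary operator $M$ to construct a stationary solution $Z = Z_t(\omega)$ of
\[
dZ_t = M(Z_t)\,dt + dN_t
\]
in the Gelfand triple. Since $M$ is hemicontinuous, strictly monotone and satisfies a growth condition, the deterministic variational theory gives global well-posedness of this equation, while a limiting argument over $\theta$ combined with the moment condition $(N)$ produces a unique \emph{tempered} stationary solution with c\`adl\`ag paths in $H$ and, after integrating the coercivity estimate, paths in $L^2_{\mathrm{loc}}([0,\infty);V)$. The point of introducing $M$, rather than a plain linear Ornstein--Uhlenbeck process, is precisely to obtain a reference process whose trajectories lie in $V$, which is what the shift in Step~3 requires; the compact embedding $V \subseteq H$ enters here to secure tightness and measurability in the limiting construction.

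\textbf{Step 2 (transformation) and Step 3 (pathwise well-posedness).} Setting $Y_t := X_t - Z_t$ cancels the noise and yields the random, but pathwise deterministic, equation
\[
\frac{dY_t}{dt} = A(Y_t + Z_t) - M(Z_t), \qquad Y_0 = x - Z_0(\omega),
\]
interpreted in the variational sense in $V^*$. For fixed $\omega$ I would then apply the existence and uniqueness theory for locally monotone equations to the shifted operator $\tilde A_\omega(t,v) := A(v + Z_t(\omega)) - M(Z_t(\omega))$. The main obstacle is to check that $\tilde A_\omega$ inherits hemicontinuity, the local monotonicity $(A2)$, coercivity and the growth condition from $A$ and $M$, uniformly enough on compact time intervals given only the c\`adl\`ag $V$-valued path $Z(\omega)$. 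Here local monotonicity is essential: the additive shift by $Z_t$ generates lower-order terms controlled by powers of $1 + \|Z_t\|_V$, and the $L^2_{\mathrm{loc}}([0,\infty);V)$-integrability of $Z$ from Step~1 is exactly what renders the resulting time-dependent monotonicity and coercivity constants locally integrable. This produces, for every $\omega$ and every $x \in H$, a unique solution $Y \in C([0,T];H) \cap L^2([0,T];V)$ depending continuously on $x$ in $H$.

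\textbf{Step 4 (cocycle, measurability, continuity).} Finally I would define
\[
S(t,\omega)x := Y_t\bigl(\omega, x - Z_0(\omega)\bigr) + Z_t(\omega)
\]
and verify the cocycle property $S(t+s,\omega) = S(t,\theta_s\omega)\circ S(s,\omega)$ from pathwise uniqueness together with the stationarity identity $Z_{t+s}(\omega) = Z_t(\theta_s\omega)$, being careful with the c\`adl\`ag nature of the Lévy-driven paths and the passage from a crude to a perfect cocycle. Joint measurability in $(t,\omega,x)$ follows from the measurable dependence of the variational solution on the data, and the continuity of $x \mapsto S(t,\omega)x$ established in Step~3 yields the asserted continuous random dynamical system $S$.
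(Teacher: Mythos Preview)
Your proposal follows essentially the same route as the paper: construct a strictly stationary solution of the auxiliary equation driven by $M$, subtract it, verify the locally monotone conditions $(H1)$--$(H4)$ for the shifted operator $\tilde A_\omega$ pathwise, apply the variational existence theorem, and assemble the cocycle via the stationary conjugation.

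Two small inaccuracies are worth flagging. First, the reference process must have paths in $L^{\alpha}_{\mathrm{loc}}(\mathbb{R};V)$, not merely $L^{2}_{\mathrm{loc}}$: the verification of coercivity and growth for $\tilde A_\omega$ produces terms like $\|Z_t\|_V^{\alpha}$, and when $\alpha>2$ square integrability is not enough. Second, the compact embedding $V\subseteq H$ is not primarily used in Step~1; its essential role is in Step~3, where the variational existence theorem for \emph{locally} monotone operators (Appendix~A of the paper) requires it. Also note that the paper introduces a free parameter $\sigma>0$ in the auxiliary equation $du_t=\sigma M(u_t)\,dt+dN_t$; this is irrelevant for mere generation of the RDS but becomes crucial later when proving bounded absorption for the attractor.
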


Under a slightly stronger coercivity condition we then prove the existence of a random attractor, leading to the second main result.
\begin{thm*}[Theorem \ref{thm:superlinear_ra} below]
Assume that $A$ is hemicontinuous, locally monotone, coercive and satisfies a growth condition. Further assume that $V \subseteq H$ is compact and that there exists a hemicontinuous, strictly monotone operator $M:V\to V^*$ satisfying a growth condition. Then the random dynamical system $S$ is compact and there is a random attractor for $S$.
\end{thm*}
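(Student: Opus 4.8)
The plan is to prove existence of a random attractor by verifying the two standard sufficient conditions from the theory of random dynamical systems: the existence of a compact random absorbing set, together with the asymptotic compactness (here, genuine compactness) of the cocycle $S$. Since the previous theorem already hands me a continuous RDS $S$ generated by solutions of \eqref{eqn:SPDE}, I do not need to re-do the generation step; I only need the dynamical estimates. So let me think about how the slightly stronger coercivity is exploited.

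\textbf{Step 1: Pathwise transformation and a priori estimates.} First I would reduce the SPDE to a random PDE by subtracting the L\'evy-driven Ornstein--Uhlenbeck (or stationary) process $Z_t$ solving the linear part, setting $Y_t = X_t - Z_t$, so that $Y$ solves a pathwise evolution equation with a random (but $\omega$-fixed) forcing term governed by $Z$. The moment condition $(N)$ on the noise guarantees that $Z$ has a tempered, stationary structure, which is exactly what is needed to produce random radii. I would then test the equation for $Y$ with $Y$ itself in the Gelfand triple $V \subseteq H \subseteq V^*$ and apply the It\^o/chain-rule formula for $\tfrac{d}{dt}\|Y_t\|_H^2$, using the coercivity condition on $A$. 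The enhanced coercivity should yield a differential inequality of the form $\tfrac{d}{dt}\|Y_t\|_H^2 \le -c\,\|Y_t\|_V^\alpha + g(\theta_t\omega)$ with $c>0$ and a tempered random term $g$ built from $Z$; crucially, $\|\cdot\|_V^\alpha$ controls $\|\cdot\|_H^2$ because $V \hookrightarrow H$ continuously, giving a strictly dissipative estimate.

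\textbf{Step 2: Random absorbing set in $H$.} From the differential inequality I would apply a Gronwall-type argument (the uniform/tempered Gronwall lemma for the cocycle, pulling back along $\theta_{-t}\omega$) to extract a tempered random radius $r(\omega)$ such that the closed ball $B_H(0,r(\omega))$ in $H$ absorbs every tempered bounded set. This is the random absorbing set. The pullback structure is essential: I evaluate the estimate at time $0$ starting from $-t$ and let $t\to\infty$, using temperedness of $Z$ to ensure the integrals converge.

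\textbf{Step 3: Compactness via the compact embedding.} This is where the hypothesis that $V \subseteq H$ is compact, together with the strictly monotone comparison operator $M$, does the decisive work — and I expect this to be the main obstacle. Integrating the differential inequality over a bounded time window gives an a priori bound on $\int \|Y_s\|_V^\alpha\,ds$, hence a bound in $V$ at some (or a.e.) time; pushing this further I would obtain a \emph{compact} random absorbing set in $V$, whose image in $H$ is precompact by the compact embedding $V \hookrightarrow\hookrightarrow H$. The role of $M$ is to supply the auxiliary strict monotonicity needed to upgrade weak convergence of solutions to strong convergence in $H$ (absorbing the locally monotone, possibly fluid-dynamical nonlinearity, which is only \emph{locally} monotone and therefore not directly dissipative), thereby establishing the compactness of the cocycle $S$ claimed in the statement. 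The genuinely delicate point is handling the local monotonicity: the bound on the nonlinearity involves $\|\cdot\|_V$-growth times a lower-order factor, so one must carefully balance the dissipation from coercivity against this growth to keep the estimate uniform in the tempered pullback limit.

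\textbf{Step 4: Conclusion.} Having produced a compact random absorbing set and shown $S$ is compact, I would invoke the standard existence theorem for random attractors (the random analogue of the Crauel--Flandoli--Schmalfu{\ss} criterion stated in the references of the introduction): a continuous RDS possessing a compact random absorbing set admits a unique random attractor, given by the $\Omega$-limit set $\mathcal A(\omega) = \overline{\bigcup_{B}\Omega_B(\omega)}$ over tempered bounded sets $B$. This yields both assertions of the theorem, that $S$ is compact and that a random attractor exists.
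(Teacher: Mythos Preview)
Your overall architecture (transform to a random PDE, derive a dissipative differential inequality, produce a bounded absorbing set, use the compact embedding $V\hookrightarrow H$ for compactness, then invoke the abstract attractor criterion) matches the paper's. But there is a genuine gap in how you use the operator $M$, and it is not cosmetic.

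You propose subtracting ``the L\'evy-driven Ornstein--Uhlenbeck process $Z_t$ solving the linear part'' and only invoke $M$ later as a device to ``upgrade weak convergence to strong convergence''. That is not the role of $M$ here, and your transformation would actually fail: since $N_t$ is only assumed to take values in $H$, any linear Ornstein--Uhlenbeck process built from $N$ is again only $H$-valued, so after subtraction the expression $A(Y_t+Z_t)$ is not defined (the argument is not in $V$). The paper avoids this precisely by using $M$: one constructs the \emph{nonlinear} strictly stationary process $u_t$ solving $du_t=\sigma M(u_t)\,dt+dN_t$, and the strong monotonicity of $M$ forces $u_\cdot\in L^\alpha_{loc}(\mathbb{R};V)$. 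Subtracting $u_t$ (not a linear OU process) is what makes the transformed random PDE well-posed. This is the reason the assumption on $M$ appears in the theorem.

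A second, related point: after the correct transformation the coercivity estimate does \emph{not} give the clean inequality $\tfrac{d}{dt}\|Y_t\|_H^2\le -c\|Y_t\|_V^\alpha+g(\theta_t\omega)$ that you wrote. One obtains instead
\[
\tfrac{d}{dt}\|Z_t\|_H^2\le \big(-\tilde c+C\|u_t(\omega)\|_V^\alpha\big)\|Z_t\|_H^2+\tilde f(t,\omega),
\]
i.e.\ the dissipation coefficient is itself random and a priori not sign-definite. The paper closes this by applying Birkhoff's ergodic theorem to the strictly stationary $u$ and choosing $\sigma$ large enough so that the time average of $\|u_t\|_V^\alpha$ is small; only then does Gronwall give a tempered absorbing radius. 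Your Step~2 omits this mechanism.

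For the compactness, the paper does not construct a $V$-valued absorbing set nor use $M$ in a monotonicity trick. It first shows the cocycle is compact (bounded sets are mapped to precompact sets, via the integrated $V$-bound and the compact embedding), and then combines this with the bounded absorbing set $F$ by setting $K(\omega):=\overline{Z(0,-1;\omega)F(\theta_{-1}\omega)}$, which is compact and absorbing. This is close in spirit to your Step~3, but the attribution of the work to $M$ is wrong.
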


The existence of a random attractor is typically proven in two steps: In the first step, uniform bounds on the $H$-norm of the flow are established, which means that there exists a bounded attracting set. In the second step, the existence of a compact attracting set is shown. In this work, we will use the compactness of the embedding $V \subseteq H$ to prove that the cocycle $S$ is compact, which together with the first step implies the existence of a compact attracting set. Notably, the approach introduced here only relies on the standard coercivity assumption of the variational approach to SPDE. This avoids further assumptions typically required in the literature in order to prove higher regularity of solutions. In particular, this avoids to pose stronger regularity assumptions on the noise.

The generality of the framework of locally monotone SPDE \eqref{eqn:SPDE} driven by additive L\'{e}vy noise results in several technical difficulties: The inclusion of additive, trace-class L\'{e}vy noise requires more involved estimates, e.g.\ in the proof of exponential integrability properties of the strictly stationary Ornstein-Uhlenbeck process. In addition, the more general growth assumptions introduced by the second author and R\"ockner in \cite{LR10} (cf.\ $(A4)$ below), lead to difficulties in controlling the coercivity and growth properties of the transformed (random) PDE, thus requiring more involved estimates than in previous works. Another key contribution of the present work is the detailed treatment of a rather long list of examples, which rely on an intensive use of interpolation inequalities, and which underlines the generality of the abstract framework. 
%In the examples, we tried to find optimal conditions on the parameters within the limitations of the framework, especially the condition $\beta (\alpha - 1) \leq 2$ using interpolation inequalities, which was not necessary in the monotone case, as there $\beta = 0$. This requires a more careful analysis of these examples.

Notably, the L\'{e}vy process $N_{t}$ in \eqref{eqn:SPDE} is only assumed to take values in $H$ which is the natural choice of noise as far as trace-class noise is considered. This is in contrast to a number of works where the noise was assumed to take values in the domain of the operator $A$, in order to make sense of the transformed equation for $\tilde{Z}_{t} := X_{t} - N_{t}$ which has the form
$$
	d\tilde{Z}_{t} = A(\tilde{Z}_{t} + N_{t})dt.
$$
It was later noticed in \cite{G13} that this assumption can be relaxed to $N_{t} \in H$ by not subtracting the noise directly, but a form of \textit{nonlinear} Ornstein-Uhlenbeck process instead. More precisely, if the operator $A$ possesses a strongly monotone part $M$, we construct in Theorem \ref{thm:nonlinear_OU} a strictly stationary solution  $u_{t}$ of the equation (for sufficiently large $\sigma > 0$)
$$
	du_{t} = \sigma M(u_{t})dt + dN_{t}.
$$
Here, the smoothing properties of the operator $M$ guarantee that $u_{t}$ takes values in the space $V$. This allows to prove the existence of a random dynamical system, assuming only trace-class noise in $H$.

\subsection*{Literature}

We now give a brief account on the available literature on random attractors for SPDE. Since this is a very active research field, this attempt has to remain incomplete and we restrict to those works which appear most relevant to the results of this work.

Random attractor was first studied in  \cite{CF94,CDF97,S92}. It is a very important concept of capturing the long-time behavior of random dynamical systems (RDS) and there are many results on existence and properties of random attractors for various SPDE \cite{CSY15, Fan04, FGS17, Gess14, GL17, GT16, GW18, KS04, LG08, LY16b, STW19, Wang17, YY14, Zhao17, Zhao17b, ZZ17}.

Equivalent conditions for the existence of random attractors were given in \cite{CDS09}. Further properties of random attractors that have been studied include measurability \cite{CF94, CDF97, CLL18}, upper-semicontinuity \cite{CLR98, CL03, LCL14, Wang14, YL17}, regularity \cite{GLWY18, LGL15, LY16} and dimension estimates \cite{Debussche97, Langa03, ZZ16}. The problem of unbounded domains has also been addressed, e.g. in \cite{BL06, BLW09, HQWZ19, KW14, LWW18, Wang09}. For random attractors on weighted spaces, cf. \cite{BLW13, BLW14, LLL18}. Furthermore, the concept of a weak random attractor has been introduced recently in \cite{Wang18, Wang19}. Further references will be given in the discussion of the examples in Section \ref{sec:appl}.

Stochastic (partial) differential equations driven by L\'{e}vy noise have been studied widely, motivated among other things by applications in finance, statistical mechanics, fluid dynamics. For an overview we refer to \cite{PZ07}. For results on random attractors see \cite{GLR11} and the references therein. Well-posedness for locally monotone SPDE driven by L\'{e}vy noise was first studied by Brze\'{z}niak, the second author and Zhu \cite{BLZ12}.

\subsection*{Overview}

In Section \ref{sec:setup} we will state the assumptions on the coefficients and the noise $N$. In Section \ref{sec:strict_stat_sol} we will study strictly stationary solutions for strongly monotone SPDE. The following section, Section \ref{sec:construction} is devoted to constructing a stochastic flow via transformation of equation \eqref{eqn:spde2} into a random PDE. This stochastic flow is then proven to be compact in Section \ref{sec:existence_ra}. Combining with the existence of a random bounded absorbing set then immediately imply the existence of a random attractor. Applications to various SPDE are given in Section \ref{sec:appl}. Appendix \ref{app:var_spde} gathers the necessary results on random PDE with locally monotone coefficients. In Appendix \ref{app:rds} we will recall the basic notions and results concerning stochastic flows, random dynamical systems and random attractors.

\section{Main framework}\label{sec:setup}

Let $(H,\<\cdot,\cdot\>_{H})$ be a real separable Hilbert space, identified with its dual space $H^{*}$ by the Riesz isomorphism.
Let $V$ be a real reflexive Banach space continuously and densely embedded into $H$.
In particular, there is a constant $\l>0$ such that $ \l \|v\|_{H}^{2}\le \|v\|_{V}^{2}$ for all $v\in V$. Then we have the following Gelfand triple
\[
V\subseteq H\equiv H^{*}\subseteq V^{*}.
\]
If $_{V^{*}}\<\cdot,\cdot\>_{V}$ denotes the dualization between $V$ and its dual space $V^{*}$, then
\[
_{V^{*}}\<u,v\>_{V}=\<u,v\>_{H},\quad\forall u\in H,v\in V.
\]

As mentioned in the introduction, we consider SPDE of the form
\begin{equation}
dX_{t}=A(X_{t})dt+dN_{t},\label{eqn:spde2}
\end{equation}
where $A:V\to V^{*}$ is $\mcB(V)/\mcB(V^{*})$-measurable (we extend $A$ by $0$ to $H$) and $N:\R\times\O\to H$ is a centered,
two-sided L\'{e}vy process on $H$.
We assume that $N$ is given by its canonical realization on $\O:=D(\R;H)$,
the space of all \cadlag\  paths in $H$ endowed with the canonical filtration
\[
\mcF_{s}^{t}=\s(\o(u)-\o(v)|\o\in\O,\ s\le u,v\le t)
\]
and Wiener shifts $\{\t_{t}\}_{t\in\R}$ (cf. e.g. \cite[Appendix A.3]{A98}, \cite[Section 1.4.1]{A09}).
% Let $(m,Q,\nu)$ be the L\'{e}vy characteristics associated to $N$.
We will impose some moment condition on $N$ which will be specified below.
Let $\P$ be the law of $N$ on $\O$. Then $(\O,\mcF,\{\mcF_{s}^{t}\}_{t\in [s , \infty)},\{\t_{t}\}_{t\in\R},\P)$ is an ergodic metric dynamical system.
We denote the augmented filtration by $\{\bar{\mcF}_{s}^{t}\}_{t\in [s , \infty)}$ and note that $\{\bar{\mcF}_{s}^{t}\}_{t\in [s , \infty)}$ is right-continuous. The extension of $\P$ to $\bar{\mcF}$ is denoted by $\bar{\P}$ and we define $\bar{\mcF}_{-\infty}^{t} := \sigma \left( \bigcup_{- \infty < s \leq t} \bar{\mcF}_{s}^{t} \right)$.

Suppose that for some $\alpha\ge2$ and $\beta\ge0$ with $\b(\a-1)\le2$, there exist constants $C, K\ge0$ and $\gamma>0$
 such that the following conditions hold for all $v,v_{1},v_{2}\in V$:
\begin{enumerate}
\item [$(A1)$](Hemicontinuity) The map $s\mapsto{}_{V^{*}}\<A(v_{1}+sv_{2}),v\>_{V}$ is continuous on $\mathbb{R}$.
\item [$(A2)$](Local monotonicity)
\[
2{}_{V^{*}}\<A(v_{1})-A(v_{2}),v_{1}-v_{2}\>_{V}\le\left(C+\eta(v_{1})+\rho(v_{2})\right)\|v_{1}-v_{2}\|_{H}^{2},
\]
% \com{Since we use det. existence results we could allow full local monotonicity here.}
where $\eta,\rho:V\rightarrow\R_{+}$ are locally bounded measurable functions.
\item [$(A3)$](Coercivity)
\[
2{}_{V^{*}}\<A(v),v\>_{V}\le-\gamma\|v\|_{V}^{\alpha}+K\|v\|_{H}^{2}+C.
\]

\item [$(A4)$](Growth) %  \com{Note that so far we need this extra restriction}

\[
\|A(v)\|_{V^{*}}^{\frac{\a}{\a-1}}\le C(1+\|v\|_{V}^{\alpha})(1+\|v\|_{H}^{\b}).
\]

\end{enumerate}

In order to be able to deduce the existence and uniqueness of solutions from the results derived in \cite{BLZ12},
we note that due to the L\'{e}vy-It\^{o} decomposition (cf. e.g. \cite[Theorem 4.1]{AR05}),
and since $\nu$ is assumed to have first moment, we have $\P$-a.s.
\begin{equation}\label{eq:LevyIto}
N_{t}=m t+W_{t}+\int_{H}z\td N(t,dz),\quad\forall t\in\R,
\end{equation}
where $m\in H$, $W_{t}$ is a trace-class $Q$-Wiener process on $H$ and $\td N$ is a compensated Poisson random measure
on $H$ with intensity measure $\nu$ (cf. \cite{AR05} for Definitions).
Now we state the assumptions on the L\'{e}vy noise as follows:

$(N)$ The process $(N_{t})_{t \in \mathbb{R}}$ is a two-sided L\'{e}vy process with values in $H$ and the corresponding L\'{e}vy measure has finite moments up to order $4$. Furthermore, without loss of generality, we assume $m=0$.

Throughout this paper we will work with the convention that $C,\td C\ge0$ and $c,\td c>0$ are generic constants,
each of which is not important for its specific value and allowed to change from line to line.

Let us now define what we mean by a solution to \eqref{eqn:spde2}.
\begin{defn}
\label{def:soln_pathw} A \cadlag, $H$-valued, $\{\bar{\mcF}_{s}^{t}\}_{t \in [s,\infty)}$-adapted process $\{S(t,s;\o)x\}_{t\in[s,\infty)}$ is a solution to
\eqref{eqn:spde2} with initial condition $x$ at time $s$ if for $\P$-a.a.\ $\o\in\O$, $S(\cdot,s;\o)x\in L_{loc}^{\a}([s,\infty);V)$ and
\[
S(t,s;\o)x=x+\int_{s}^{t}A(S(r,s;\o)x)dr+N_{t}(\o)-N_{s}(\o),\quad\forall t\ge s.
\]

\end{defn}

\section{Strictly stationary solutions for monotone SPDE\label{sec:strict_stat_sol}}

The construction of stochastic flows for locally monotone SPDE driven by L\'{e}vy noise (presented in Section \ref{sec:construction} below)
will be based on strictly stationary solutions for strongly monotone SPDE driven by L\'{e}vy noise.
The existence and uniqueness of such strictly stationary solutions will be proven in this section, which might be of independent interest.
This generalizes a similar construction presented in \cite{G13} for the case of trace-class Wiener noise.

More precisely, in this section we will consider strongly monotone SPDE of the form
\begin{equation}
dX_{t}=\sigma M(X_{t})dt+dN_{t},\label{eqn:stricly_mon_SPDE}
\end{equation}
where $\sigma>0$, $N_{t}$ is a two-sided L\'{e}vy process (as above) and $M:V\to V^{*}$ is measurable.
Instead of the local monotonicity condition $(A2)$, we assume that $M$ is strongly monotone, i.e.
\begin{enumerate}
\item [($A2^\prime$)](Strong monotonicity) There exists a constant $c>0$ such that
\[
2{}_{V^{*}}\<M(v_{1})-M(v_{2}),v_{1}-v_{2}\>_{V}\le-c\|v_{1}-v_{2}\|_{V}^{\a},\quad\forall v_{1},v_{2}\in V,
\]
where $\a$ is the same constant as in $(A3)$.
\end{enumerate}
It is easy to see that $(A2')$ implies that $(A3)$ also holds for $M$.

By the above L\'{e}vy-It\^{o} decomposition \eqref{eq:LevyIto}, we may rewrite \eqref{eqn:stricly_mon_SPDE} as
\begin{equation}
dX_{t}= \sigma M(X_{t})  dt + dW_{t}+\int_{H}z\td N(d t, dz)  \label{eqn:SPDE_2}
\end{equation}
and \cite[Theorem 1.2]{BLZ12} implies the existence and uniqueness of an $\bar{\mcF}_{s}^{t}$-adapted variational solution $X(t,s;\o)x$ for each $x\in
H$.
Strictly stationary solutions to \eqref{eqn:stricly_mon_SPDE} will be constructed by letting $s\to-\infty$ in $X(t,s;\o)x$ and then selecting a strictly
stationary version $u$ from the resulting stationary
limit process using Proposition \ref{prop:stat_perfect} in Appendix B.

\begin{thm}[Strictly stationary solutions]
\label{thm:nonlinear_OU} Suppose that $M$ satisfies $(A1)$, $(A2')$,  $(A4)$ with $\beta=0$ and let $X(\cdot,s;\o)x$ be the solution to
\eqref{eqn:SPDE_2} starting in $x\in H$ at time $s$. Then
\begin{enumerate}
\item [ $(i)$ ] There exists an $\bar{\mcF}_{-\infty}^{t}$-adapted, $\mcF$-measurable process $u\in L^{2}(\O;D(\R;H))\cap
    L^{\a}(\O;L_{loc}^{\a}(\R;V))$ such that
\[
\lim_{s\to-\infty}X(t,s;\cdot)x=u_{t}
\]
in $L^{2}(\O;H)$ for all $t\in\R$, $x\in H$.
\item [$(ii)$]  $u$ solves \eqref{eqn:stricly_mon_SPDE} in the following sense:
\begin{equation}
u_{t}=u_{s}+\sigma\int_{s}^{t}M(u_{r})dr+N_{t}-N_{s},\ \P\text{-a.s., }\ t\ge s.\label{eqn:all_time_spde}
\end{equation}
\item [$(iii)$] $u$ can be chosen to be strictly stationary with \cadlag\  paths and satisfying $u_{\cdot}(\o)\in L_{loc}^{\a}(\R;V)$, for all
    $\o\in\O$.
\item [$(iv)$] Let $2 \le p \le 4$, then for each $\delta \ge 0$, $t\in\R$  and large enough $\sigma > \frac{8 \delta}{p c \lambda}$, there is a constant $C(\delta,\sigma)>0$ such that
\begin{equation}
\E\int_{-\infty}^{t}e^{\delta r}\|u_{r}\|_{V}^{\a}\|u_{r}\|_{H}^{p-2}dr\le C(\delta,\sigma)e^{\delta t},\label{eqn:u_V_bound}
\end{equation}
where $C(\delta,\sigma)\to0$ for $\sigma\to\infty$.
\item [$(v)$] There exists a $\t$-invariant set $\O_{0}\subseteq\O$ of full $\P$-measure such that for $\o\in\O_{0}$ and $s,t \in \R$, $s < t$,
\[
\frac{1}{t-s}\int_{s}^{t}\|u_{r}(\o)\|_{V}^{\a}dr\to\E\|u_{0}\|_{V}^{\a}\le C(\sigma),  \  s\to-\infty,
\]
where $C(\sigma)\to0$ for $\sigma\to\infty$.
\end{enumerate}
Let $p\in\N,$ $p\ge2$, then
\begin{enumerate}
\item [ $(vi)$] There exists a $\t$-invariant set $\O_{0}\subseteq\O$ of full $\P$-measure such that for $\o\in\O_{0}$
\begin{equation}
\frac{1}{t}\int_{0}^{t}\|u_{r}(\o)\|_{H}^{p}dr\to\E\|u_{0}\|_{H}^{p},\quad t\to\pm\infty.\label{eqn:OU_bound}
\end{equation}
\item[$(vii)$]$\|u_{t}(\o)\|_{H}^{p}$ has sublinear growth, i.e.
\[
\lim_{t\to\pm\infty}\frac{\|u_{t}(\o)\|_{H}^{p}}{|t|}=0.
\]
\end{enumerate}
\end{thm}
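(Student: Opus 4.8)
The plan is to construct $u$ as the $s\to-\infty$ limit of the solution flow $X(t,s;\cdot)x$, exploiting that strong monotonicity $(A2')$ makes solutions contract in $H$. For $(i)$ I would first establish a uniform-in-$s$ bound $\sup_{s\le t}\mathbb{E}\,\|X(t,s;\cdot)x\|_H^2\le C(1+\|x\|_H^2)$ by applying It\^o's formula to $\|X_t\|_H^2$: testing $(A2')$ against $v_2=0$ and using Young's inequality together with $(A4)$ (so that $\|M(0)\|_{V^*}<\infty$) gives the coercivity estimate $2\,{}_{V^*}\langle M(v),v\rangle_V\le-\tfrac{c}{2}\|v\|_V^\alpha+C$, which combined with $\|v\|_V^\alpha\ge\lambda^{\alpha/2}\|v\|_H^\alpha\ge\lambda^{\alpha/2}(\|v\|_H^2-1)$ yields dissipativity in $H$; the Wiener and jump corrections are finite by $(N)$, and Gronwall closes the bound. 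Next, for $s_1<s_2\le t$, uniqueness forces $X(\cdot,s_1;\omega)x$ and $X(\cdot,s_2;\omega)x$ to solve \eqref{eqn:SPDE_2} with identical noise on $[s_2,t]$, so their difference $D_r$ obeys the pathwise identity $\tfrac{d}{dr}\|D_r\|_H^2=2\sigma\,{}_{V^*}\langle M(X(r,s_1)x)-M(X(r,s_2)x),D_r\rangle_V\le-\sigma c\|D_r\|_V^\alpha\le-\sigma c\lambda^{\alpha/2}\|D_r\|_H^\alpha$. For $\alpha=2$ this gives exponential contraction $\|D_t\|_H^2\le e^{-\sigma c\lambda(t-s_2)}\|D_{s_2}\|_H^2$, and for $\alpha>2$ the inequality $\phi'\le-k\phi^{\alpha/2}$ gives the data-independent rate $\|D_t\|_H^2\le[k(\tfrac{\alpha}{2}-1)(t-s_2)]^{-2/(\alpha-2)}$; together with the uniform moment bound controlling $\|D_{s_2}\|_H^2=\|X(s_2,s_1)x-x\|_H^2$, both show $(X(t,s;\cdot)x)_s$ is Cauchy in $L^2(\Omega;H)$ as $s\to-\infty$, defining $u_t$, whose measurability is inherited from the approximants. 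For $(ii)$ I would pass to the limit in the integral equation: the uniform bounds give weak convergence $X(\cdot,s_n)x\rightharpoonup u$ in $L^\alpha(\Omega;L^\alpha_{loc}(\mathbb{R};V))$ and, by $(A4)$ with $\beta=0$, $M(X(\cdot,s_n)x)\rightharpoonup\xi$ in the dual; a Minty-type monotonicity argument using $(A1)$ and $(A2')$ identifies $\xi=M(u)$, giving \eqref{eqn:all_time_spde}.

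For $(iii)$, the construction is shift-covariant, $X(t,s;\theta_h\omega)x=X(t+h,s+h;\omega)x$, so the limit satisfies $u_t(\theta_h\omega)=u_{t+h}(\omega)$ for $\mathbb{P}$-a.e.\ $\omega$; that is, $u$ is stationary in the crude sense. Upgrading this to a genuinely strictly stationary version with c\`adl\`ag paths and $u_\cdot(\omega)\in L^\alpha_{loc}(\mathbb{R};V)$ for \emph{every} $\omega$ is exactly the content of Proposition \ref{prop:stat_perfect}, which I would invoke directly.

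Part $(iv)$ is the analytic core. For $2\le p\le4$ I would apply It\^o's formula to $r\mapsto e^{\delta r}\|u_r\|_H^p$: the weight yields $\delta e^{\delta r}\|u_r\|_H^p$, the drift yields the dissipative term $-\tfrac{pc\sigma}{4}e^{\delta r}\|u_r\|_H^{p-2}\|u_r\|_V^\alpha$ plus lower-order drift, and the Wiener and jump parts yield corrections controlled by the moments up to order $4$ from $(N)$ (which is where $p\le4$ is used). Bounding $\delta\|u_r\|_H^p\le\tfrac{\delta}{\lambda}\|u_r\|_H^{p-2}\|u_r\|_V^\alpha$ up to lower-order terms, the hypothesis $\sigma>\tfrac{8\delta}{pc\lambda}$ ensures $\tfrac{pc\sigma}{4}>\tfrac{2\delta}{\lambda}$, so the dissipative term strictly dominates; taking expectations and integrating over $(-\infty,t]$ (for $\delta>0$ the boundary contribution at $-\infty$ vanishes by stationarity and the finiteness of $\mathbb{E}\|u_0\|_H^p$) isolates $\mathbb{E}\int_{-\infty}^t e^{\delta r}\|u_r\|_V^\alpha\|u_r\|_H^{p-2}\,dr$, and dividing by the dissipation coefficient, which is of order $\sigma$, leaves $C(\delta,\sigma)$ of order $1/\sigma$, hence $C(\delta,\sigma)\to0$ as $\sigma\to\infty$. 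The main obstacle is precisely this step: making the It\^o formula for $\|\cdot\|_H^p$ and the passage to $-\infty$ rigorous in the variational L\'evy setting while tracking all constants sharply enough to produce both the explicit threshold on $\sigma$ and the decay of $C(\delta,\sigma)$.

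Parts $(v)$--$(vii)$ are then ergodic-theoretic consequences of $(iii)$, using that the metric dynamical system is ergodic. For $(v)$, applying $(iv)$ with $p=2$ and a fixed $\delta>0$ and using stationarity gives $\mathbb{E}\int_{-\infty}^t e^{\delta r}\|u_r\|_V^\alpha\,dr=\tfrac{1}{\delta}e^{\delta t}\,\mathbb{E}\|u_0\|_V^\alpha$, whence $\mathbb{E}\|u_0\|_V^\alpha\le\delta\,C(\delta,\sigma)=:C(\sigma)\to0$ as $\sigma\to\infty$, while Birkhoff's ergodic theorem gives convergence of the time averages on a $\theta$-invariant set of full measure. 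Part $(vi)$ is a direct application of the pointwise ergodic theorem to the nonnegative stationary integrand $\|u_r\|_H^p$ (with the convention that both sides may be $+\infty$). For $(vii)$ I would derive sublinear growth from stationarity by a Borel--Cantelli argument at integer times, $\sum_n\mathbb{P}(\|u_n\|_H^p>\varepsilon n)=\sum_n\mathbb{P}(\|u_0\|_H^p>\varepsilon n)<\infty$ under the relevant moment bound, upgraded to continuous time by controlling the oscillation of the c\`adl\`ag paths over unit intervals.
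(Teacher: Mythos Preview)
Your proposal is essentially correct and tracks the paper's proof closely for $(i)$--$(iv)$: the contraction estimate via $(A2')$ (power-law decay for $\alpha>2$, exponential for $\alpha=2$), the Minty argument for identifying the limit, shift-covariance followed by Proposition~\ref{prop:stat_perfect}, and the weighted It\^o formula for $e^{\delta r}\|u_r\|_H^p$ all match the paper. Two minor deviations are worth noting. First, in $(v)$ you extract $\E\|u_0\|_V^\alpha\le\delta\,C(\delta,\sigma)$ from the $\delta>0$ case of $(iv)$, whereas the paper simply takes $\delta=0$ and $p=2$ in the intermediate inequality \eqref{eqn:u_V_bound_1}, which gives the bound in one line; both routes work. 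Second, and more substantively, for $(vi)$ and $(vii)$ you gloss over the finiteness of $\E\|u_0\|_H^p$ (``with the convention that both sides may be $+\infty$'' and ``under the relevant moment bound''), but this is precisely what is needed and the paper establishes it separately via It\^o's formula and the a~priori lemmas from \cite[Lemma~5.2]{G13}. For $(vii)$ itself, the paper invokes the dichotomy of linear growth \cite[Proposition~4.1.3]{A98}, which says directly that a stationary nonnegative $L^1$ observable satisfies $f(\theta_t\omega)/|t|\to0$ a.s.; this is cleaner than your Borel--Cantelli route, since the latter requires controlling oscillations of the c\`adl\`ag path over unit intervals, which for jump processes is genuine extra work that you have not indicated how to carry out.
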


\begin{proof}
As the operator in \eqref{eqn:SPDE_2} is strongly monotone, some parts of the proof here are similar to  the associated statements in \cite{G13}. So here we  will only highlight the differences arising from allowing L\'{e}vy noise  and otherwise refer to \cite{G13}.

Let $X(t,s;\o)x$ denote the variational solution to \eqref{eqn:stricly_mon_SPDE} starting at time $s$ in $x\in H$ (cf. \cite{BLZ12}).

$(i)$ First we show that there is an $\bar{\mcF}_{-\infty}^{t}$-adapted, $\mcF$-measurable process $u:\R\times\O\to H$ such that
\[
\lim_{s\to-\infty}X(t,s;\cdot)x=u_{t},
\]
in $L^{2}(\O;H)$ for each $t\in\R$, independent of $x\in H$.

Following the same line of argument as in \cite[p. 143]{G13}, using the coercivity, It\^{o}'s formula and the comparison lemma \cite[Lemma 5.1]{G13} for $\alpha > 2$ or Gronwall's lemma for $\alpha = 2$, respectively, we obtain that for all $t\wedge0\ge s_{2}$
\[
\begin{split}\label{eqn:diff_{2}} & \E\sup_{r\in[t,\infty)}\|X(r,s_{2};\cdot)x-X(r,s_{1};\cdot)y\|_{H}^{2}\\
 & \hskip10pt\le\begin{cases}
\left((\frac{\a}{2}-1)c\sigma\lambda^{\frac{\a}{2}}(t-s_{2})\right)^{-\frac{2}{\a-2}} & \text{, if }\a>2;\\
2\left(e^{\frac{c\sigma\lambda}{2}s_{1}}\|y\|_{H}^{2}+e^{\frac{c\sigma\lambda}{2}s_{2}}\|x\|_{H}^{2}+C\right)e^{\frac{c\sigma\lambda}{2}s_{2}}e^{-c\sigma\lambda
t} & \text{, if }\a=2.
\end{cases}
\end{split}
\]
Hence, $X(\cdot,s;\cdot)x$ is a Cauchy sequence in $L^{2}(\O;D([t,\infty);H))$ and
\[
u_{t}:=\lim_{s\to-\infty}X(t,s;\cdot)x
\]
exists as a limit in $L^{2}(\O;H)$ for all $t\in\R$ and $u$ is $\bar{\mcF}_{-\infty}^{t}$-adapted.

Since $X(\cdot,s;\cdot)x$ also converges in $L^{2}(\O;D([t,\infty);H))$, $u$ is \cadlag\ $\P$-almost surely. Since $u$ is $\bar{\mcF}$-measurable, we can
choose an indistinguishable $\mcF$-measurable version of $u$.

$(ii)$  The next step consists of showing that  $u$ solves \eqref{eqn:all_time_spde}.

This is achieved using It\^{o}'s formula for $\| \cdot \|_{H}^{2}$ (with  the only difference being an additional term of $\int_{H} \| z \|_{H}^{2} \nu(dz)$ on the right-hand side), the compactness of the embedding as well as the monotonicity trick and the hemicontinuity $(A1)$. For details, cf. \cite[p.144]{G13}.

$(iii)$  Now we prove the crude stationarity for $u$. Let us first show $X(t,s;\o)x=X(0,s-t;\t_{t}\o)x$ for all $t\ge s$, $\P$-almost surely.

Let $h>0,t\ge s$ and define $\bar{X}_{h}(t)(\o):=X(t-h,s-h;\t_{h}\o)x$. Then for $\P$-a.a. $\o\in\O$ (with zero set possibly depending on $s,h,x$)
\begin{align*}
\bar{X}_{h}(t)(\o) & = X(t-h,s-h;\t_{h}\o)x\\
 & =x + \sigma \int_{s-h}^{t-h} M(X(r,s-h;\t_{h}\o)x)dr+N_{t-h}(\t_{h}\o)-N_{s-h}(\t_{h}\o)\\
 & =x + \sigma \int_{s-h}^{t-h}M(X(r,s-h;\t_{h}\o)x)dr+N_{t}(\o)-N_{s}(\o)\\
 & =x + \sigma\int_{s}^{t}M(\bar{X}_{h}(r)(\o))dr+N_{t}(\o)-N_{s}(\o).
\end{align*}
Hence, by uniqueness, $X(t-h,s-h;\t_{h}\o)x=X(t,s;\o)x$, $\P$-almost surely. In particular
\begin{equation}
X(0,s-t;\t_{t}\o)x=X(t,s;\o)x,\label{eqn:X-shift}
\end{equation}
$\P$-almost surely (with zero set possibly depending on $t,s,x$).

Now for an arbitrary sequence $s_{n} \to -\infty$ there exists a subsequence (again denoted by $s_{n}$) such that $X(t,s_{n};\cdot)x\to u_{t}$ and
$X(0,s_{n}-t;\cdot)x\to u_{0}$ $\P$-almost surely. Therefore, passing to the limit in \eqref{eqn:X-shift} gives
\[
u_{0}(\t_{t}\o)=u_{t}(\o),
\]
$\P$-almost surely (with zero set possibly depending on $t$).

Since $u_{\cdot}\in L^{\a}(\O;L_{loc}^{\a}(\R;V))$, hence in particular $u_{\cdot}(\o)\in L_{loc}^{\a}(\R;V)$ for almost all $\o\in\O$, and since $u$ is
$\mcF$-measurable,
we can employ Proposition \ref{prop:stat_perfect} to deduce the existence of an indistinguishable, $\mcF$-measurable,
$\bar{\mcF}_{-\infty}^{t}$-adapted, strictly stationary, \cadlag\  process $\tilde{u}$ such that $\tilde{u}_{\cdot}(\o)\in L_{loc}^{\a}(\R;V)$ for all
$\o\in\O$, i.e. crude stationarity.

$(iv)$  Next we proceed to prove \eqref{eqn:u_V_bound}. Let $\delta\ge0$ and note that by $(A2')$ and $(A4)$
\[
2\sigma\dualdel{V}{M(v)}{v}+\text{tr}Q \le-\frac{c\sigma}{2}\|v\|_{V}^{\a}+C, \quad\forall v\in V.
\]
An application of It\^{o}'s formula and the product rule yields that
\begin{align*}
 & e^{\delta t_{2}}\|u_{t_{2}}\|_{H}^{p}=e^{\delta t_{1}}\|u_{t_{1}}\|_{H}^{p}\\
 & \hskip15pt+\frac{p}{2}\int_{t_{1}}^{t_{2}}e^{\delta r}\|u_{r}\|_{H}^{p-2}\left(2\sigma\Vbk{M(u_{r}),u_{r}}+\text{tr}Q \right)dr\\
 & \hskip15pt+p\int_{t_{1}}^{t_{2}}e^{\delta r}\|u_{r}\|_{H}^{p-2}\<u_{r},dW_{r}\>_{H}+p(\frac{p}{2}-1)\int_{t_{1}}^{t_{2}}e^{\delta
 r}\|u_{r}\|_{H}^{p-4}\|Q^{\frac{1}{2}}u_{r}\|_{H}^{2}dr\\
 & \hskip15pt+p\int_{t_{1}}^{t_{2}}\int_{H}e^{\delta r}\|u_{r}\|_{H}^{p-2}\<u_{r},z\>_{H}\td N(dr,dz)\\
 & \hskip15pt+\int_{t_{1}}^{t_{2}}\int_{H}e^{\delta r} \left( \|u_{r}+z\|_{H}^{p}-\|u_{r}\|_{H}^{p}-p\|u_{r}\|_{H}^{p-2}\<u_{r},z\>_{H} \right)
 N(dr,dz)\\
 & \hskip15pt+\delta\int_{t_{1}}^{t_{2}}e^{\delta r}\|u_{r}\|_{H}^{p}dr.
\end{align*}
and thus by $(A3)$
\begin{align*}
 & \E e^{\delta t_{2}}\|u_{t_{2}}\|_{H}^{p}\le\E e^{\delta t_{1}}\|u_{t_{1}}\|_{H}^{p}\\
 & \hskip15pt+\frac{p}{2}\E\int_{t_{1}}^{t_{2}}e^{\delta r}\|u_{r}\|_{H}^{p-2}\left(-\frac{c\sigma}{2}\|u_{r}\|_{V}^{\a}+C\right)dr\\
 & \hskip15pt+p(\frac{p}{2}-1)\E\int_{t_{1}}^{t_{2}}e^{\delta r}\|u_{r}\|_{H}^{p-4}\|Q^{\frac{1}{2}}u_{r}\|_{H}^{2}dr\\
 & \hskip15pt+\E\int_{t_{1}}^{t_{2}}\int_{H}e^{\delta r} \left( \|u_{r}+z\|_{H}^{p}-\|u_{r}\|_{H}^{p}-p\|u_{r}\|_{H}^{p-2}\<u_{r},z\>_{H} \right)
 N(dr,dz)\\
 & \hskip15pt+\delta\E\int_{t_{1}}^{t_{2}}e^{\delta r}\|u_{r}\|_{H}^{p}dr.
\end{align*}
Noting that
\[
|\|x+h\|_{H}^{p}-\|x\|_{H}^{p}-p\|x\|_{H}^{p-2}\<x,h\>_{H}|\le C_{p}(\|x\|_{H}^{p-2}\|h\|_{H}^{2}+\|h\|^{p}),
\]
we obtain by using the moment assumption $(N)$ that

\begin{align*}
 & \E\int_{t_{1}}^{t_{2}}\int_{H}e^{\delta r} \left( \|u_{r}+z\|_{H}^{p}-\|u_{r}\|_{H}^{p}-p\|u_{r}\|_{H}^{p-2}\<u_{r},z\>_{H} \right) N(dr,dz)\\
 & =\E\int_{t_{1}}^{t_{2}}\int_{H}e^{\delta r} \left( \|u_{r}+z\|_{H}^{p}-\|u_{r}\|_{H}^{p}-p\|u_{r}\|_{H}^{p-2}\<u_{r},z\>_{H} \right) \nu(dz)dr\\
 & \le C\E\int_{t_{1}}^{t_{2}}\int_{H}e^{\delta r} \left( \|u_{r}\|_{H}^{p-2}\|z\|_{H}^{2}+\|z\|_{H}^{p} \right) \nu(dz)dr\\
 & \le C\left(\E\int_{t_{1}}^{t_{2}}e^{\delta r}\left(\|u_{r}\|_{H}^{p-2}+1\right) d r \right) ,
\end{align*}
and therefore
\begin{align*}
\E e^{\delta t_{2}}\|u_{t_{2}}\|_{H}^{p}\le~ & \E e^{\delta t_{1}}\|u_{t_{1}}\|_{H}^{p}-\frac{pc\sigma}{4}\E\int_{t_{1}}^{t_{2}}e^{\delta
r}\|u_{r}\|_{H}^{p-2}\|u_{r}\|_{V}^{\a}dr\\
 & +\left(p(\frac{p}{2}-1) \text{tr}Q +C\right)\E \int_{t_{1}}^{t_{2}}e^{\delta r}\|u_{r}\|_{H}^{p-2}dr \\
& +\delta\E\int_{t_{1}}^{t_{2}}e^{\delta r}\|u_{r}\|_{H}^{p}dr +C\int_{t_{1}}^{t_{2}}e^{\delta r}dr.
\end{align*}
Applying Young's inequality and the embedding $V \subset H$, we get
\begin{align*}
e^{\delta t_{2}}\E\|u_{t_{2}}\|_{H}^{p} & \le e^{\delta t_{1}}\E\|u_{t_{1}}\|_{H}^{p} - \left( \frac{pc\sigma}{4} - 2 \delta \lambda^{-1} \right)
\E\int_{t_{1}}^{t_{2}}e^{\delta r}\|u_{r}\|_{H}^{p-2}\|u_{r}\|_{V}^{\a}dr \\
&\quad +C \int_{t_{1}}^{t_{2}}e^{\delta r}dr.
\end{align*}
Stationarity of $u_{t}$ implies
\begin{align}
\left( \frac{pc\sigma}{4} - 2 \delta \lambda^{-1} \right) \E\int_{t_{1}}^{t_{2}}e^{\delta r}\|u_{r}\|_{H}^{p-2}\|u_{r}\|_{V}^{\a}dr\le C \int_{t_{1}}^{t_{2}}e^{\delta
r}dr\label{eqn:u_V_bound_1}
\end{align}
and thus \eqref{eqn:u_V_bound} holds, provided $\sigma$ is sufficiently large that $\frac{pc\sigma}{4} - 2 \delta \lambda^{-1} > 0$.

$(v)$  Applying \eqref{eqn:u_V_bound_1} for $\delta=0$ and $p=2$ yields
\[
\E\int_{t_{1}}^{t_{2}}\|u_{r}\|_{V}^{\a}dr\le C (t_2- t_1) .
\]
Since $u_{t}$ is stationary, we have $\E\|u_{r}\|_{V}^{\a}=\E\|u_{0}\|_{V}^{\a}$. Hence,
\[
\E\|u_{0}\|_{V}^{\a}\le  C<\infty,
\]
and Birkhoff's ergodic theorem implies the claimed convergence.

$(vi)$  The convergence \eqref{eqn:OU_bound} follows exactly as in \cite[Proof of Theorem 3.3 (i), 146 f.]{G13} from the stationarity and Birkhoff's ergodic theorem as well as an application of It\^{o}'s formula and the \emph{a priori} bounds arising from \cite[Lemma 5.2]{G13} in the case $\alpha > 2$ and Gronwall's lemma for $\alpha = 2$, respectively.

$(vii)$ This is proven by invoking the dichotomy of linear growth (cf. \cite[Proposition 4.1.3]{A98}) in the same way as in \cite[Proof of Theorem 3.3 (ii), p. 147]{G13}.
\end{proof}

\section{Generation of random dynamical systems\label{sec:construction}}

In order to construct a stochastic flow associated to \eqref{eqn:spde2}, we aim to transform \eqref{eqn:spde2} into a random PDE.
However, since we only assume that $N_{t}$ takes values in $H$ we cannot directly subtract the noise.
Motivated by \cite{G13} we use the transformation based on a strongly stationary solution to the strictly monotone part of \eqref{eqn:spde2}.
More precisely, we impose
the following assumption:
\begin{enumerate}
\item [$(V)$]There exists an operator $M:V\to V^{*}$ satisfying $(A1), (A2')$ and $(A4)$ with $\b=0$.
\end{enumerate}

The motivation behind the assumption $(V)$ is that $M$ is the strongly monotone part of $A$ in \eqref{eqn:spde2}. For example, for many semilinear SPDE
such as stochastic reaction-diffusion equations, stochastic Burgers equations and stochastic 2D Navier-Stokes equations, one can take $M=\Delta$
(standard Laplace operator). For quasilinear SPDE like stochastic porous media equations, stochastic $p$-Laplace equations or stochastic Cahn-Hilliard type equations one can take
$M(v)=\Delta(|v|^{r-1}v)$, $M(v)=\div(|\nabla v|^{p-2}\nabla v)$ and $M(v) = -\Delta^{2} v$, respectively (see Section \ref{sec:appl} for more concrete examples).

Following the arguments given in \cite{G13}, for $\sigma>0$ we may  consider the $\mcF$-measurable, strictly stationary solution $u_{t}$ (given by
Theorem \ref{thm:nonlinear_OU}) to
\[
du_{t}=\sigma M(u_{t})dt+dN_{t}.
\]
The key point is that $u$ takes values in $V$, while $N$  takes values in $H$. The operator $M$ is used to construct Ornstein-Uhlenbeck type process corresponding to $dX_{t}=\sigma M(X_{t})dt+dN_{t}$. If $N_{t}$ takes values in $V$ (cf.\cite{GLR11}), then this regularizing property is not needed and we can just choose $M=-Id_{H}$. The condition $(V)$ can be removed in this case.

Let $X(t,s;\o)x$ denote a variational solution to \eqref{eqn:spde2} starting in $x$ at time $s$ (the existence and uniqueness of this solution
will be proved in Theorem  \ref{thm:generation}).

Defining $\bar{X}(t,s;\o)x:=X(t,s;\o)x-u_{t}(\o)$ we get
\begin{align*}
\<\bar{X}(t,s;\o)x,v\>_{H} & =\<x-u_{s},v\>_{H}+\int_{s}^{t}\ _{V^{*}}\<A(\bar{X}(r,s;\o)x+u_{r}),v\>_{V}dr\\
 & \hskip10pt-\sigma\int_{s}^{t}\ _{V^{*}}\<M(u_{r}),v\>_{V}dr,\ v\in V,\ \P\text{-}a.s.
\end{align*}
We have used the following stationary conjugation mapping
\begin{equation}
T(t,\o)y:=y-u_{t}(\o)\label{eqn:stat_conj}
\end{equation}
and the conjugated process $Z(t,s;\o)x:=T(t,\o)X(t,s;\o)T^{-1}(s,\o)x$ satisfies
\begin{equation}
Z(t,s;\o)x=x+\int_{s}^{t}\left(A(Z(r,s;\o)x+u_{r})-\sigma M(u_{r})\right)dr\label{eqn:transformed_spde_0}
\end{equation}
as an equation in $V^{*}$. Let
\[
A_{\o}(r,v):=\begin{cases}
A\left(v+u_{r}\right)-\sigma M(u_{r}),\  & \text{if }u_{r}\in V;\\
A\left(v\right),\  & \text{else,}
\end{cases}
\]
where for the simplicity of notations we suppressed the $\o$-dependency of $u_{r}$.

Since $u_{r}(\o)\in V$ for all $\o\in\O$ and a.a. $r\in\R$, from \eqref{eqn:transformed_spde_0} we obtain
\begin{align}
Z(t,s;\o)x=x+\int_{s}^{t}A_{\o}(r,Z(r,s;\o)x)\ dr.\label{eqn:transformed_spde}
\end{align}
In order to define the associated stochastic flow to \eqref{eqn:spde2}, we will first solve \eqref{eqn:transformed_spde} for each $\o\in\O$ and then set
\begin{equation}
S(t,s;\o)x:=T(t,\o)^{-1}Z(t,s;\o)T(s,\o)x.\label{eq:S-def}
\end{equation}
This will be done in detail in the proof of Theorem \ref{thm:generation} below. For this purpose and also in order to subsequently prove the compactness
of the stochastic flow, we need to impose the following additional assumption:
\begin{enumerate}
\item [(A5)] The embedding $V\subseteq H$ is compact. \end{enumerate}
\begin{thm}[Generation of stochastic flows]
\label{thm:generation} Suppose that $(A1)$--$(A5)$, $(V)$ are satisfied and  there exist non-negative constants $C$ and $\kappa$ such that
\begin{equation}
\begin{split}\eta(v_{1}+v_{2}) & \le C(\eta(v_{1})+\eta(v_{2})),\\
\rho(v_{1}+v_{2}) & \le C(\rho(v_{1})+\rho(v_{2})),\\
\eta(v)+\rho(v) & \le C(1+\|v\|_{V}^{\alpha})(1+\|v\|_{H}^{\kappa}),\quad\forall v_{1},v_{2},v\in V.
\end{split}
\label{conditon on eta and rho}
\end{equation}
Then we have the following:
\begin{enumerate}
\item [$(i)$] There is a unique solution $Z(t,s;\o)$ to \eqref{eqn:transformed_spde}. $Z(t,s;\o)$ and $S(t,s;\o)$ (defined in \eqref{eq:S-def}) are
    stationary conjugated continuous RDS in $H$ and $S(t,s;\o)x$ is a solution of \eqref{eqn:spde2} in the sense of Definition \ref{def:soln_pathw}.
%    \com{ check proof}
\item [$(ii)$] The maps $t\mapsto Z(t,s;\o)x$, $S(t,s;\o)x$ are \cadlag, $x\mapsto Z(t,s;\o)x$, $S(t,s;\o)x$ are continuous locally uniformly in $s,t$
    and $s\mapsto Z(t,s;\o)x$, $S(t,s;\o)x$ are right-continuous. \end{enumerate}
\end{thm}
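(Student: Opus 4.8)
The plan is to reduce everything to the \emph{pathwise} well-posedness of the transformed random PDE \eqref{eqn:transformed_spde} and then to transport the resulting flow back through the stationary conjugation $T(t,\omega)$ of \eqref{eqn:stat_conj}. First I would fix $\omega$ in a $\theta$-invariant set of full $\mathbb{P}$-measure on which the strictly stationary process $u$ from Theorem \ref{thm:nonlinear_OU} satisfies $u_\cdot(\omega)\in L^\alpha_{loc}(\mathbb{R};V)$ and on which the integrability bounds of parts $(iv)$--$(v)$ hold $\omega$-wise (the finiteness of the expectations there forces a.s.\ local finiteness of the relevant time integrals). On such $\omega$ the time-dependent operator $A_\omega(r,\cdot)$ is a genuine deterministic operator, and the strategy is to verify that it fits the structural hypotheses of the variational existence and uniqueness theory for locally monotone PDE collected in Appendix \ref{app:var_spde}. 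Once a unique solution $Z(t,s;\omega)x$ of \eqref{eqn:transformed_spde} is produced for each such $\omega$, I set $S(t,s;\omega)x:=Z(t,s;\omega)(x-u_s(\omega))+u_t(\omega)$ as in \eqref{eq:S-def} and recover the remaining claims by undoing the transformation.

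The heart of the argument is checking the four structural conditions for $A_\omega$. Hemicontinuity is immediate from $(A1)$, since the shifts by $u_r$ and by $\sigma M(u_r)$ are affine in the test direction. Local monotonicity follows by applying $(A2)$ to the pair $v_1+u_r$ and $v_2+u_r$, whose difference is again $v_1-v_2$; this produces the bound with coefficient $C+\eta(v_1+u_r)+\rho(v_2+u_r)$ times $\|v_1-v_2\|_H^2$, and the subadditivity and growth assumptions \eqref{conditon on eta and rho} are exactly what is needed to dominate $\eta(v_1+u_r)+\rho(v_2+u_r)$ by $C(\eta(v_1)+\rho(v_2))$ plus a term controlled by $(1+\|u_r\|_V^\alpha)(1+\|u_r\|_H^\kappa)$, locally integrable in $r$ by Theorem \ref{thm:nonlinear_OU}. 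For coercivity and growth I would expand $\langle A(v+u_r)-\sigma M(u_r),v\rangle$ by splitting $v=(v+u_r)-u_r$, applying $(A3)$ and $(A4)$ to the $(v+u_r)$-terms, and absorbing the cross terms by Young's inequality, producing a $-\tfrac{\gamma}{2}\|v\|_V^\alpha$ leading term plus inhomogeneities that are polynomial in $\|u_r\|_V$ and $\|u_r\|_H$. \textbf{This step is the main obstacle:} under the more general growth condition $(A4)$ the inhomogeneous terms carry factors $\|u_r\|_H^\beta$ and mixed powers of the type $\|u_r\|_V^\alpha\|u_r\|_H^{p-2}$, and one must show that they are locally integrable in time and integrable against the coercivity estimate (here the constraint $\beta(\alpha-1)\le 2$ is used in the Young splitting). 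This is precisely where the refined bounds of Theorem \ref{thm:nonlinear_OU}$(iv)$--$(v)$, in particular \eqref{eqn:u_V_bound}, are invoked, so that the effective coercivity and growth data of $A_\omega$ are finite on compact time intervals $\mathbb{P}$-a.s.

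With the structural conditions in hand, the abstract theory of Appendix \ref{app:var_spde} yields a unique variational solution $Z(t,s;\omega)x$ of \eqref{eqn:transformed_spde}, continuous in $x$ locally uniformly in $s,t$ as a byproduct of the local monotonicity estimate (which also yields uniqueness). I would then establish the cocycle and shift properties: uniqueness forces the two-parameter evolution identity $Z(t,r;\omega)\circ Z(r,s;\omega)=Z(t,s;\omega)$, and the strict stationarity $u_t(\omega)=u_0(\theta_t\omega)$ from Theorem \ref{thm:nonlinear_OU}$(iii)$ gives $A_{\theta_h\omega}(r-h,\cdot)=A_\omega(r,\cdot)$, whence, again by uniqueness, $Z(t,s;\omega)x=Z(t-s,0;\theta_s\omega)x$, which is the perfect cocycle identity for the conjugated flow. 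Measurability in $\omega$ follows from the measurable dependence of the variational solution on the measurable data $r\mapsto u_r(\omega)$, and transporting through $T$ shows that $S$ is a continuous RDS stationarily conjugate to $Z$.

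Finally, for part $(ii)$ and the solution property: since $Z(\cdot,s;\omega)x$ is continuous in $t$ as the $V^*$-valued integral in \eqref{eqn:transformed_spde}, while $t\mapsto u_t(\omega)$ is \cadlag, the representation $S(t,s;\omega)x=Z(t,s;\omega)(x-u_s)+u_t$ is \cadlag\ in $t$; continuity in $x$ is transported from $Z$, and right-continuity in $s$ follows from right-continuity of $s\mapsto Z(t,s;\omega)x$ together with continuity in the initial datum. To verify that $S$ solves \eqref{eqn:spde2} in the sense of Definition \ref{def:soln_pathw}, I would insert $A_\omega(r,Z)=A(Z+u_r)-\sigma M(u_r)$ into the integral identity \eqref{eqn:transformed_spde}, use $Z(r,s;\omega)(x-u_s)+u_r=S(r,s;\omega)x$ to recognise $A(S(r,s;\omega)x)$ in the integrand, and invoke the equation \eqref{eqn:all_time_spde} for $u$ to replace $u_t-u_s-\sigma\int_s^t M(u_r)\,dr$ by $N_t-N_s$; this yields exactly the integral identity of Definition \ref{def:soln_pathw}, with the required $L^\alpha_{loc}([s,\infty);V)$ regularity inherited from $Z$ and $u$.
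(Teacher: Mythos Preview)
Your proposal is correct and follows essentially the same route as the paper: verify $(H1)$--$(H4)$ for $A_\omega$ pathwise, apply the variational theory of Appendix \ref{app:var_spde}, deduce the flow and cocycle properties from uniqueness together with strict stationarity of $u$, and transport everything through the conjugation $T$.

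One point worth correcting: you identify the local integrability of the inhomogeneous coefficients (mixed terms like $\|u_r\|_V^\alpha\|u_r\|_H^{2}$) as the ``main obstacle'' and invoke Theorem \ref{thm:nonlinear_OU}$(iv)$--$(v)$, in particular \eqref{eqn:u_V_bound}, to handle it. In fact part $(iii)$ alone already suffices here: since the strictly stationary version satisfies $u_\cdot(\omega)\in L^\alpha_{loc}(\mathbb{R};V)$ and is \cadlag\ (hence locally bounded) in $H$ for \emph{every} $\omega\in\Omega$, all products of the form $\|u_r\|_V^\alpha(1+\|u_r\|_H^q)$ are trivially in $L^1_{loc}$, and no restriction to a full-measure set is needed at this stage. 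The paper makes exactly this observation when checking $(H3)$--$(H4)$. The quantitative bounds of Theorem \ref{thm:nonlinear_OU}$(iv)$--$(v)$ are not used for generation; they are reserved for the bounded absorption estimate in Section \ref{sec:existence_ra}, where one needs control of the long-time averages (and the freedom to choose $\sigma$ large), not merely local finiteness.
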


\begin{proof}
 $(i)$  We consider \eqref{eqn:transformed_spde} as an $\o$-wise random PDE. We will use this point of view to define the associated stochastic flow.

In order to obtain the existence and uniqueness of solutions to \eqref{eqn:transformed_spde} for each fixed $(\o,s)\in\O\times\R$, we need to verify the
assumptions $(H1)$--$(H4)$
(see  Appendix \ref{app:var_spde}) for $(t,v)\mapsto A_{\o}(t,v)$. We will check $(H1)$--$(H4)$ for $A_{\o}(t,v)$ on each bounded interval
$[S,T] \subset \R$ and for each fixed $\o \in \O$. For ease of notations we suppress the $\o$-dependency of the coefficients in the following calculations.

$(H1)$: Follows immediately from $(A1)$ for $A$.

$(H2)$: Let $v_{1},v_{2}\in V$, $\o\in\O$ and $t\in\R$ such that $u_{t}(\o)\in V$. Then by $(A2)$ and \eqref{conditon on eta and rho} we find
\begin{align*}
 & 2{}_{V^{*}}\<A_{\o}(t,v_{1})-A_{\o}(t,v_{2}),v_{1}-v_{2}\>_{V}\\
 & =2{}_{V^{*}}\<A\left(v_{1}+u_{t}\right)-A\left(v_{2}+u_{t}\right),(v_{1}+u_{t})-(v_{2}+u_{t})\>_{V}\\
 & \le\left(C+\eta(v_{1}+u_{t})+\rho(v_{2}+u_{t})\right)\|v_{1}-v_{2}\|_{H}^{2}\\
 & \le\left(C+C\eta(u_{t})+C\rho(u_{t})+C\eta(v_{1})+C\rho(v_{2})\right)\|v_{1}-v_{2}\|_{H}^{2}.
\end{align*}
Note that by \eqref{conditon on eta and rho}
\[
\eta(u_{t})+\rho(u_{t})\le C\left(1+\|u_{t}\|_{V}^{\alpha}\right)\left(1+\|u_{t}\|_{H}^{\kappa}\right).
\]
Since $u_{\cdot}(\o)\in L_{loc}^{\a}(\R;V)\cap L_{loc}^{\infty}(\R;H)$, we have
\[
f(t):=C+C\eta(u_{t})+C\rho(u_{t})\in L_{loc}^{1}(\R),
\]
i.e. $(H2)$ holds for $A_{\omega}$. For $t\in\R$ such that $u_{t}(\o)\not\in V$ a similar calculation holds.

$(H3)$: For $v\in V$, $\o\in\O$ and $t\in\R$ such that $u_{t}(\o)\in V$, by $(A3)$ we can estimate
\begin{align}
2{}_{V^{*}}\<A_{\o}(t,v),v\>_{V}=~ & 2 {}_{V^{*}}\<A\left(v+u_{t}\right),v+u_{t}\>_{V}\label{eqn:H3-1}\\
 & -2 {}_{V^{*}}\<A\left(v+u_{t}\right),u_{t}\>_{V}-2\sigma{}_{V^{*}}\<M(u_{t}),v\>_{V}\nonumber \\
\le~ & K \|v+u_{t}\|_{H}^{2}-\gamma \|v+u_{t}\|_{V}^{\a}+C\nonumber \\
 & +2 \|A\left(v+u_{t}\right)\|_{V^{*}}\|u_{t}\|_{V}-2 \sigma{}_{V^{*}}\<M(u_{t}),v\>_{V}.\nonumber
\end{align}
For any $\ve_{1},\ve_{2}>0$, by $(A4)$, the condition $(\alpha - 1)\beta \leq 2$ and Young's inequality there exist constants $C_{\ve_{1}},C_{\ve_{2}}$ such that
\begin{align*}
 & 2\|A\left(v+u_{t}\right)\|_{V^{*}}\|u_{t}\|_{V}\\
 & \le C\left(1+\|v+u_{t}\|_{V}^{\a-1}\right)\left(1+\|v+u_{t}\|_{H}^{\b\frac{\a-1}{\a}}\right)\|u_{t}\|_{V}\\
 & \le\ve_{1}\left(1+\|v+u_{t}\|_{V}^{\a}\right)+C_{\ve_{1}}\left(1+\|v+u_{t}\|_{H}^{\b(\a-1)}\right)\|u_{t}\|_{V}^{\a}\\
 & \le\ve_{1}\|v+u_{t}\|_{V}^{\a}+C_{\ve_{1}}\|u_{t}\|_{V}^{\a}\|v+u_{t}\|_{H}^{2}+2C_{\ve_{1}}\|u_{t}\|_{V}^{\a}+\ve_{1}
\end{align*}
and
\begin{align*}
2\sigma{}_{V^{*}}\<M(u_{t}),v\>_{V} & \le C_{\ve_{2}}\|M(u_{t})\|_{V^{*}}^{\frac{\a}{\a-1}}+\ve_{2}\|v\|_{V}^{\a}\\
 & \le C_{\ve_{2}}\Big(C\|u_{t}\|_{V}^{\a}+C\Big)+\ve_{2}\|v\|_{V}^{\a},
\end{align*}
where we recall that $M$ satisfies $(A4)$ with $\beta=0$.

Combining the above estimates with \eqref{eqn:H3-1} we have
\begin{align*}
2{}_{V^{*}}\<A_{\o}(t,v),v\>_{V} & \le(K+C_{\ve_{1}}\|u_{t}\|_{V}^{\a})\|v+u_{t}\|_{H}^{2}-(\gamma-\ve_{1})\|v+u_{t}\|_{V}^{\a}\\
 & \hskip10pt+2C_{\ve_{1}}\|u_{t}\|_{V}^{\a}+C+\varepsilon_{1}+C_{\ve_{2}}\big(C\|u_{t}\|_{V}^{\a}+C\big)+\ve_{2}\|v\|_{V}^{\a}.
\end{align*}
Using
\[
\|v+u_{t}\|_{V}^{\a}\ge2^{1-\a}\|v\|_{V}^{\a}-\|u_{t}\|_{V}^{\a}
\]
we obtain (for $\ve_{1}$ small enough):
\begin{align*}
 & 2{}_{V^{*}}\<A_{\o}(t,v),v\>_{V}\\
 & \le-(\gamma-\ve_{1}-2^{\alpha-1}\ve_{2})2^{1-\a}\|v\|_{V}^{\a} +2(K+C_{\ve_{1}}\|u_{t}\|_{V}^{\a})\|v\|_{H}^{2}  \\
 &
 \hskip10pt+(\gamma-\ve_{1}+2C_{\ve_{1}}+CC_{\ve_{2}})\|u_{t}\|_{V}^{\a}+2(K+C_{\ve_{1}}\|u_{t}\|_{V}^{\a})\|u_{t}\|_{H}^{2}+CC_{\ve_{2}}+C+\varepsilon_{1}.
\end{align*}
Now choosing $\ve_{1},\ve_{2}$ small enough yields
\begin{align}
2_{V^{*}}\<A_{\o}(t,v),v\>_{V} & \le-\tilde{\gamma}\|v\|_{V}^{\a}+g(t)\|v\|_{H}^{2}+\tilde{f}(t),\label{eqn:transf_coerc}
\end{align}
where
\begin{align*}
\tilde{\gamma} & :=(\gamma-\ve_{1}-2^{\alpha-1}\ve_{2})2^{1-\a}>0;\\
g(t) & :=2(K+C_{\ve_{1}}\|u_{t}\|_{V}^{\a})\in L_{loc}^{1}(\R);\\
\tilde{f}(t) & :=( \gamma-\ve_{1}+2C_{\ve_{1}}+CC_{\ve_{2}})\|u_{t}\|_{V}^{\a}\\
 & \hskip10pt+2(C+C_{\ve_{1}}\|u_{t}\|_{V}^{\a})\|u_{t}\|_{H}^{2}+CC_{\ve_{2}}+C+\varepsilon_{1}\in L_{loc}^{1}(\R).
\end{align*}
Here the local integrability of $g$ and $\tilde{f}$ follows from the local $L^{\alpha}$-integrability of $u$ in $V$ and local boundedness of $u$ in $H$.
For $t\in\R$ such that $u_{t}(\o)\not\in V$ we can use the same calculation to prove $(H3)$.

$(H4)$: For $v\in V$, $\o\in\O$ and $t\in\R$ such that $u_{t}(\o)\in V$:
\begin{align*}
\|A_{\o}(t,v)\|_{V^{*}}^{\frac{\a}{\a-1}} & =\|A\left(v+u_{t}\right)-\sigma M(u_{t})\|_{V^{*}}^{\frac{\a}{\a-1}}\\
 & \le C\left(\|A\left(v+u_{t}\right)\|_{V^{*}}^{\frac{\a}{\a-1}}+\|M(u_{t})\|_{V^{*}}^{\frac{\a}{\a-1}}\right)\\
 & \le C\left(1+\|v+u_{t}\|_{V}^{\a}\right)\left(1+\|v+u_{t}\|_{H}^{\b}\right)+C\|M(u_{t})\|_{V^{*}}^{\frac{\a}{\a-1}}\\
 & \le C\Big(1+\|v\|_{V}^{\a}+\|u_{t}\|_{V}^{\a}+\|v\|_{H}^{\b}+\|u_{t}\|_{H}^{\b}+\|v\|_{V}^{\a}\|v\|_{H}^{\b}\\
 & \hskip15pt+\|v\|_{V}^{\a}\|u_{t}\|_{H}^{\b}+\|u_{t}\|_{V}^{\a}\|v\|_{H}^{\b}+\|u_{t}\|_{V}^{\a}\|u_{t}\|_{H}^{\b}\Big)\\
 & \le C\left(1+\|u_{t}\|_{H}^{\b}\right)\|v\|_{V}^{\a}+C(1+\|u_{t}\|_{V}^{\a})\|v\|_{H}^{\b}+C\|v\|_{V}^{\a}\|v\|_{H}^{\b}\\
 & \hskip15pt+C\left(1+\|u_{t}\|_{V}^{\a}+\|u_{t}\|_{H}^{\b}+\|u_{t}\|_{V}^{\a}\|u_{t}\|_{H}^{\b}\right)\\
 & \le\left(C_{1}(t)+C_{2}(t)\|v\|_{V}^{\a}\right)\left(1+\|v\|_{H}^{\b}\right),
\end{align*}
where
\begin{align*}
C_{1}(t) & :=C\left(1+\|u_{t}\|_{V}^{\a}+\|u_{t}\|_{H}^{\b}+\|u_{t}\|_{V}^{\a}\|u_{t}\|_{H}^{\b}\right)\in L_{loc}^{1}(\R);\\
C_{2}(t) & :=C\left(1+\|u_{t}\|_{H}^{\b}\right)\in L_{loc}^{\infty}(\R).
\end{align*}
This yields $(H4)$ on any bounded interval $[S,T]\subseteq\R$. %

For $t\in\R$ such that $u_{t}\not\in V$ one can show $(H4)$ by a similar calculation.

Hence, $(H1)$--$(H4)$ are satisfied for $A_{\o}$ for each $\o\in\O$ and on each bounded interval $[S,T]\subseteq\R$. By Theorem \ref{thm:var_ex} there
thus exists a unique solution
\[
Z(\cdot,s;\o)x\in L_{loc}^{\a}([s,\infty);V)\cap L_{loc}^{\infty}([s,\infty);H)
\]
to \eqref{eqn:transformed_spde} for every $(s,\o,x)\in\R\times\O\times H$.

By the uniqueness of solutions for \eqref{eqn:transformed_spde} we have the flow property
\[
Z(t,s;\o)x=Z(t,r;\o)Z(r,s;\o)x.
\]
Therefore, by Proposition \ref{prop:def_conj_flow} the family of maps given by
\begin{equation}
S(t,s;\o):=T(t,\o)\circ Z(t,s;\o)\circ T^{-1}(s,\o)\label{eq:def_flow}
\end{equation}
defines a stochastic flow.

Strict stationarity of $u_{t}$ implies that $A_{\o}(t,v)=A_{\t_{t}\o}(0,v)$. By the uniqueness of solutions for \eqref{eqn:transformed_spde} we deduce
that
\begin{align*}
Z(t,s;\o)x=Z(t-s,0;\t_{s}\o)x
\end{align*}
and thus $Z(t,s;\o)x$ is a cocycle. Since $T(t,\o)$ is a stationary conjugation, the same holds for $S(t,s;\o)x$.

Measurability of $\o\mapsto Z(t,s;\o)x$  follows as in the proof of \cite[Theorem 1.4]{GLR11}. In fact, the same argument proves
$\bar{\mcF}_{s}^{t}$-adaptedness
of $\o\mapsto Z(t,s;\o)x$. Due to \eqref{eq:def_flow}, in order to deduce measurability and $\bar{\mcF}_{s}^{t}$-adaptedness of $S(t,s;\o)x$
we only need to prove local uniform continuity of $x\mapsto Z(t,s;\o)x$ which will be done in  $(ii)$ below. Then it is  simple  to show that
$S(t,s;\o)x$ is a solution to \eqref{eqn:spde2}.

$(ii)$  Since $T(t,\o)y$ is \cadlag\  in $t$ locally uniformly in $y$, $t\mapsto S(t,s;\o)x$ is \cadlag. Since $(H2)$ holds for $A_{\omega}$, by
Gronwall's lemma (cf. \cite[Theorem 5.2.4 (i), Eq. (5.32)]{LR15}) we have for $s\le t$,
\begin{align*}
&\|Z(t,s;\o)x-Z(t,s;\o)y\|_{H}^{2} \\
 &\quad \le  \exp\left[ \int_{s}^{t} \left(f(r)+\eta(Z(r,s;\omega)x)+\rho(Z(r,s;\omega)y) \right) dr \right]\|x-y\|_{H}^{2}.
\end{align*}

By (\ref{conditon on eta and rho}) for $y\in B(x,r):=\{y\in H|\ \|x-y\|_{H}\le r\}$ we have
\[
\int_{s}^{t}(f(r)+\eta(Z(r,s;\omega)x)+\rho(Z(r,s;\omega)y))dr\le C.
\]
Thus $x\mapsto Z(t,s;\o)x$ is continuous locally uniformly in $s,t$. Moreover,  for   $s_{1}<s_{2}$ we have
\begin{align*}
 & \|Z(t,s_{1};\o)x-Z(t,s_{2};\o)x\|_{H}^{2}\\
= & \|Z(t,s_{2};\o)Z(s_{2},s_{1};\o)x-Z(t,s_{2};\o)x\|_{H}^{2}\\
\le & \exp\left[  \int_{s_{2}}^{t} \left( f(r)+\eta(Z(r,s_{1};\omega)x)+\rho(Z(r,s_{2};\omega)x) \right) d r \right] \|Z(s_{2},s_{1};\o)x-x\|_{H}^{2},
\end{align*}
which implies right-continuity of $s\mapsto Z(t,s;\o)x$.

Right continuity of $s\mapsto S(t,s;\o)x$ and continuity of $x\mapsto S(t,s;\o)x$ locally uniformly in $s,t$ follow from the corresponding properties of $Z(t,s;\o)$.
\end{proof}

\section{Existence of a random attractor\label{sec:existence_ra}}

% A familiy $\{D(t,\o)\}_{t \in \R, \o \in \O}$ of subsets of $H$ is said to be tempered if $\lim_{s\to-\infty} e^{\eta s}\|D(s,\o)\|_H^2 = 0$, for all $\o \in \O, \eta > 0$.
%    \begin{cases}
%        \text{system of all families } \{D(t,\o)\}_{t \in \R, \o \in \O} \text{ of subsets of } H,& \text{ if } \a > 2 \\
In the following let $\mcD$ be the system of all tempered sets. Now we are in a position to state the main result of this work.
\begin{thm}
\label{thm:superlinear_ra} Suppose that $(A1)$--$(A5)$, $(V)$ and $(\ref{conditon on eta and rho})$ hold and let $S(t,s;\o)$ be the continuous cocycle
constructed in Theorem \ref{thm:generation}. Then

(i) $S(t,s;\o)$ is a compact cocycle.

For $\a=2$ additionally assume $K<\frac{\gamma\lambda}{4}$ in $(A3)$. Then

(ii) there is a random $\mcD$-attractor $\mcA$ for $S(t,s;\o)$.
\end{thm}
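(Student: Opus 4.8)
The plan is to obtain the attractor from the standard criterion recalled in Appendix~\ref{app:rds}: a continuous cocycle that is \emph{compact} and admits a bounded random $\mcD$-absorbing set possesses a random $\mcD$-attractor, realized as the $\o$-limit set of the absorbing set. Part (i) supplies the compactness and the remainder of the argument supplies the absorbing set. I would carry out both steps for the transformed flow $Z(t,s;\o)$ solving \eqref{eqn:transformed_spde}, since by \eqref{eq:def_flow} the full flow $S$ is the conjugate of $Z$ by the stationary homeomorphism $T(t,\o)=\cdot-u_t(\o)$ of \eqref{eqn:stat_conj}; composition with this translation affects neither relative compactness nor, thanks to the sublinear growth $\|u_t(\o)\|_H=o(|t|)$ from Theorem~\ref{thm:nonlinear_OU}(vii), temperedness.

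For (i) I would first test \eqref{eqn:transformed_spde} with $Z$ and invoke the transformed coercivity \eqref{eqn:transf_coerc}, giving
\[
\tfrac{d}{dr}\|Z(r,s;\o)x\|_H^2 \le -\tilde\gamma\|Z(r,s;\o)x\|_V^\a + g(r)\|Z(r,s;\o)x\|_H^2 + \tilde f(r),
\]
with $g,\tilde f\in L^1_{loc}(\R)$. Gronwall's inequality bounds $\sup_{r\in[s,t]}\|Z\|_H^2$ and then, upon integrating, $\int_s^t\|Z\|_V^\a\,dr$, uniformly over $x$ in any $H$-bounded set $B$; the growth bound $(A4)$ in turn bounds $\partial_r Z=A_\o(\cdot,Z)$ in $L^{\a/(\a-1)}([s,t];V^*)$. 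Using the compact embedding $(A5)$ in the chain $V\hookrightarrow\hookrightarrow H\hookrightarrow V^*$, the Aubin--Lions--Simon lemma renders $\{Z(\cdot,s;\o)x:x\in B\}$ relatively compact in $L^\a([s,t];H)$. For a bounded sequence $(x_n)$ I extract a subsequence along which $Z(\cdot,s;\o)x_{n_k}$ converges in $L^\a([s,t];H)$, hence in $H$ at a.e.\ time; fixing one such time $r_0\in(s,t)$ and using the continuity of $y\mapsto Z(t,r_0;\o)y$ from Theorem~\ref{thm:generation}(ii) together with the cocycle identity $Z(t,s;\o)x_{n_k}=Z(t,r_0;\o)Z(r_0,s;\o)x_{n_k}$ shows that $Z(t,s;\o)x_{n_k}$ converges in $H$. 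Thus $Z(t,s;\o)$, and therefore $S(t,s;\o)$, is compact. This is exactly the ``no higher regularity'' route: compactness follows from time-integrated $V$-bounds plus Aubin--Lions, not from a pointwise-in-time bound in $V$.

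For (ii) it remains to construct the absorbing set. From the same energy inequality and $\|v\|_V^\a\ge\lambda^{\a/2}\|v\|_H^\a$ I reduce to the scalar inequality
\[
\tfrac{d}{dr}\|Z(r,s;\o)x\|_H^2 \le -\tilde\gamma\lambda^{\a/2}\|Z\|_H^\a + g(r)\|Z\|_H^2 + \tilde f(r)
\]
and integrate backward from $t=0$ as $s\to-\infty$. For $\a>2$ the superlinear term dominates and a comparison argument as in \cite[Lemmas 5.1, 5.2]{G13} yields a bound on $\|Z(0,s;\o)x\|_H$ independent of $\|x\|_H$; for $\a=2$ I instead apply Gronwall with effective rate $g(r)-\tilde\gamma\lambda$, whose constant part is $2K-\tilde\gamma\lambda$, negative precisely under the hypothesis $K<\gamma\lambda/4$ (since $\tilde\gamma\to\gamma/2$ as $\ve_1,\ve_2\to0$). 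The fluctuating parts of $g$ and $\tilde f$ are governed by $\|u_r\|_V^\a$, whose exponentially weighted integral is finite and whose time-average equals $\E\|u_0\|_V^\a\le C(\sigma)$ with $C(\sigma)\to0$ by Theorem~\ref{thm:nonlinear_OU}(iv),(v); choosing $\sigma$ large keeps the averaged rate strictly dissipative and the forcing integrable. This produces a $\t$-invariant full-measure set and a tempered radius $r(\o)$ with $Z(0,s;\o)D(\t_s\o)\subseteq\bar B_H(0,r(\o))$ for all sufficiently negative $s$ and every $D\in\mcD$, and conjugating by $T$ gives a bounded $\mcD$-absorbing set for $S$. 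Combining (i) with this absorbing set, the existence theorem of Appendix~\ref{app:rds} produces the random $\mcD$-attractor $\mcA$. The principal obstacle throughout is the treatment of the random, only locally integrable coefficients $g(r),\tilde f(r)$ assembled from $\|u_r\|_V^\a$: securing absorption \emph{uniformly as $s\to-\infty$} forces one to balance the (super)linear dissipation against these fluctuations via the ergodic time-average bounds of Theorem~\ref{thm:nonlinear_OU} and the smallness $C(\sigma)\to0$ for large $\sigma$, which is also where the sharp threshold $K<\gamma\lambda/4$ is consumed in the critical case $\a=2$.
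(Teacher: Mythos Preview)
Your proposal is correct and follows essentially the same route as the paper: compactness of $Z$ (hence $S$) via time-integrated $V$-bounds together with the compact embedding $(A5)$ (the paper simply cites \cite[Theorem~3.1]{G13-4} for this step rather than writing out the Aubin--Lions argument), then bounded $\mcD$-absorption from the transformed coercivity \eqref{eqn:transf_coerc} combined with the ergodic averages of Theorem~\ref{thm:nonlinear_OU}(v) and a sufficiently large choice of $\sigma$, and finally transfer from $Z$ to $S$ via the stationary conjugation. The only minor deviation is that the paper handles all $\a\ge 2$ by a single Gronwall step (for $\a>2$ the term $K\|v\|_H^2$ is absorbed into $-\tilde\gamma\|v\|_V^\a$ by Young's inequality, so no smallness of $K$ is needed there), whereas you invoke a separate comparison lemma for the superlinear case; both variants are valid.
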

\begin{comment}
%\com{Could consider the strongly monotone case here. Then attractor a single point.
% However, would not cover more than our former result, since restricted to strongly monotone situations.}

% Under the strong monotonicity assumption
%   \begin{enumerate}
%     \item [$(A2')$] (Strong Monotonicity) There exist $\l: \O \to \R_+\setminus\{0\}$, $\a >  2$ such that
%         $$2{  }_{V^*}\<A(t,v_1;\o)-A(t,v_2;\o), v_1-v_2 \>_V \le -\l(\o) \|v_1-v_2\|_H^\a,$$
%   \end{enumerate}
% for all $v_1,v_2 \in V, t \in \R, \o \in \O$, the long-time behavior of solutions to \eqref{eqn:spde2} becomes especially simple. We will prove that in
this case the random attractor consists of a single random point. A similar result has been obtained in \cite{GLR11} for additive noise taking values in
$V$. We generalize this result by allowing $H$-valued noise and additional real linear multiplicative noise. Let $\mcD^g$ be the system of all families
of subsets  $\{D(t,\o)\}_{t \in \R, \o \in \O}$ of $H$.
% \begin{theorem}\label{thm:equil}
%    Let $A$ be $(\mcB(\R)\otimes\mcB(V)\otimes\mcF,\mcB(V^*))$-measurable, $\bar\mcF_t$-adapted, satisfy $(A1)$--$(A4)$, $(A2')$ and assume $(V)$. Then
there exists an $\mcF$-measurable, continuous process $\eta: \R \times \O \to H$ such that $\P$-a.s.
%       \[ \lim_{s \to - \infty} S(t,s;\o)x = \eta(t,\o), \ \forall\ (t,x) \in \R\times H\]
%     and $\mcA(t,\o) := \{ \eta(t,\o) \}$ defines a random $\mcD^g$-attractor for $S(t,s;\o)x$. The speed of convergence is estimated by
%     $$ \|S(t,s;\o)x - \eta(t,\o)\|_H^2 \le
%          \left( \left(\frac{\a}{2}-1 \right)c(\o) \int_{s}^t e^{(\a-2)\mu (\b_r(\o)-\b_t(\o))}  dr \right)^{-\frac{2}{\a-2}}.$$
%     If $A$ is strictly stationary then $\mcA$ can be chosen to be strictly stationary.
% \end{theorem}
\end{comment}

As a first step of the proof of Theorem \ref{thm:superlinear_ra} we shall prove bounded absorption. Let $B(x,r):=\{y\in H|\ \|x-y\|_{H}\le r\}$. %
\begin{comment}
For $\delta:\O\to\R_{+}\setminus\{0\}$, let $\mcD^{\delta}$ be the set of all sets of subexponential growth of order $\delta$, i.e.\
 $\{D(\o)\}_{\o\in\O}\in\mcD^{\delta}$ iff $\|D(\theta_{s}\o)\|_{H}^{2}=O(e^{\delta(\o)s})$ for $s\to-\infty$ and all $\o\in\O$. First, we will prove
 bounded absorption for the stochastic flows $S(t,s;\o)$, $Z(t,s;\o)$.
\end{comment}

\begin{prop}[Bounded absorption]
\label{prop:bdd_abs} Assume $(A1)$--$(A5)$, $(V)$ and $(\ref{conditon on eta and rho})$. If $\a=2$, additionally assume $K<\frac{ \gamma\lambda}{4}$ in
$(A3)$. Then there is a random bounded $\mcD$-absorbing set $\{F(\o)\}_{\o\in\O}$ for $S(t,s;\o)$. \\
More precisely, there is a measurable function $R:\O\to\R_{+}\setminus\{0\}$ such that for all $D\in\mcD$ there is an absorption time $s_{0}=s_{0}(D;\o)$
such that
\begin{equation}
S(0,s;\o)D(\theta_{s}\o)\subseteq B(0,R(\o)),\quad\forall s\le s_{0},\ \P\text{-a.s.}\label{eqn:abs_by_ball}
\end{equation}
\end{prop}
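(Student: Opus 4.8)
The plan is to work entirely with the conjugated flow $Z(t,s;\o)x = T(t,\o)S(t,s;\o)T^{-1}(s,\o)x$, establish a uniform-in-initial-data bound on $\|Z(0,s;\o)x\|_H$ as $s \to -\infty$, and then transfer this to $S$ via the conjugation $S(0,s;\o)y = Z(0,s;\o)(y - u_s(\o)) + u_0(\o)$, using that $u_0(\o) \in H$ is a fixed $\o$-dependent point and that tempered sets $D(\theta_s\o)$ have subexponentially growing $H$-diameter.

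First I would apply Itô's formula (really the chain rule, since \eqref{eqn:transformed_spde} is a deterministic random PDE) to $\|Z(r,s;\o)x\|_H^2$ and invoke the transformed coercivity estimate \eqref{eqn:transf_coerc} proved in Theorem \ref{thm:generation}, namely
\[
\frac{d}{dr}\|Z(r,s;\o)x\|_H^2 \le -\tilde\gamma\|Z(r,s;\o)x\|_V^\a + g(r)\|Z(r,s;\o)x\|_H^2 + \tilde f(r),
\]
where $g(r) = 2(K + C_{\ve_1}\|u_r\|_V^\a)$ and $\tilde f \in L^1_{loc}$. Using $\|\cdot\|_V^\a \ge \lambda^{\a/2}\|\cdot\|_H^\a \ge \lambda\|\cdot\|_H^2$ (after discarding lower-order terms when $\a>2$, or directly when $\a=2$), the $-\tilde\gamma\|Z\|_V^\a$ term dominates and yields a genuine dissipation rate. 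The key structural point is that $\int g(r)\,dr$ is controlled by the integrated $V$-norm of the stationary process $u$, and by Theorem \ref{thm:nonlinear_OU}(v) the time average $\frac{1}{t-s}\int_s^t \|u_r(\o)\|_V^\a\,dr$ converges to $\E\|u_0\|_V^\a \le C(\sigma)$, which can be made small by taking $\sigma$ large. Thus for large $\sigma$ the effective exponent $\int_s^0 (g(r) - \tilde\gamma\lambda)\,dr$ is negative and linear in $s$, giving exponential decay of the $\|x\|_H^2$ contribution via Gronwall.

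The main obstacle is handling the $\a = 2$ case, where there is no superlinear dissipation to spare and one must play the constant $K$ against $\tilde\gamma\lambda$; this is exactly why the extra hypothesis $K < \gamma\lambda/4$ is imposed. Here I would verify that after tracking the constants through \eqref{eqn:transf_coerc}, the deterministic drift coefficient $\tilde\gamma\lambda - 2K$ (or its analogue) is strictly positive, so Gronwall again produces a decaying exponential dominating the growing $\|x\|_H^2$. The remaining care is in the inhomogeneous term: I would write the Gronwall solution as
\[
\|Z(0,s;\o)x\|_H^2 \le e^{\int_s^0 g(r)\,dr - \tilde\gamma\lambda|s|}\|x\|_H^2 + \int_s^0 e^{\int_r^0 g - \tilde\gamma\lambda(r'-\cdots)}\tilde f(r)\,dr,
\]
and argue that the integral term converges as $s \to -\infty$ to a finite, $\o$-measurable quantity, since $\tilde f$ is built from $\|u_r\|_V^\a$ and $\|u_r\|_H^2$, whose exponentially weighted integrals are finite by Theorem \ref{thm:nonlinear_OU}(iv) and the sublinear growth (vii) controlling the exponential prefactor against the stationary process.

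Finally I would define $R(\o)^2$ to be (twice) the limiting value of this inhomogeneous integral plus a fixed margin, verify its $\o$-measurability, and conclude: for any tempered $D$, substituting $x = y - u_s(\o)$ with $y \in D(\theta_s\o)$, the initial-data term carries a prefactor $\|y - u_s\|_H^2 \le 2\|D(\theta_s\o)\|_H^2 + 2\|u_s\|_H^2$ which, being subexponential by temperedness of $D$ and by the sublinear growth of $\|u_s\|_H^2$ from Theorem \ref{thm:nonlinear_OU}(vii), is annihilated by the exponential decay factor as $s \to -\infty$. Re-adding $u_0(\o)$ shifts the ball center back, and choosing the absorption time $s_0(D;\o)$ so that the decaying term is below the margin for all $s \le s_0$ gives \eqref{eqn:abs_by_ball}. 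I expect the bookkeeping of constants in the $\a=2$ case and the measurable selection of $R$ to be the only genuinely delicate points; the rest is the standard absorbing-set argument adapted to the nonlinear Ornstein-Uhlenbeck conjugation.
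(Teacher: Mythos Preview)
Your proposal is correct and follows essentially the same route as the paper: work with the conjugated flow $Z$, use the transformed coercivity \eqref{eqn:transf_coerc} together with $V\hookrightarrow H$ to obtain a differential inequality of the form $\frac{d}{dt}\|Z\|_H^2 \le (-\tilde c + C\|u_t\|_V^\a)\|Z\|_H^2 + \tilde f(t)$, choose $\sigma$ large so that Theorem~\ref{thm:nonlinear_OU}(v) forces the time-averaged exponent to be strictly negative, apply Gronwall, and pass back to $S$ via the tempered conjugation. The paper handles the $\a=2$ and $\a>2$ cases simultaneously (the extra hypothesis $K<\gamma\lambda/4$ is used exactly as you describe), and invokes exponential integrability of $\tilde f$---which is the combination of parts (iv), (v) and (vii) you single out---to show the inhomogeneous Gronwall term defines a finite measurable $R(\o)$.
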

\begin{proof}
By \eqref{eqn:transf_coerc} we have
\begin{align*}
2{}_{V^{*}}\<A_{\o}(t,v),v\>_{V} & \le -(\gamma-\ve_{1}-2^{\alpha-1}\ve_{2})2^{1-\a}\|v\|_{V}^{\a} +
2(K+C_{\ve_{1}}\|u_{t}\|_{V}^{\a})\|v\|_{H}^{2} +\td f(t).
\end{align*}
Note that for $\a=2$ we also have $K<\frac{\gamma\lambda}{4}$, and choosing $\ve_{1},\ve_{2}$ small enough, we conclude
\[
2\dualdel{V}{A_{\o}(t,v)}{v}\le  c(t,\o)\|v\|_{H}^{2}+\td f(t,\o),\quad\forall v\in V,
\]
% &\le -c(t) \|v\|_H^2 + \td f_t.
% \end{align*}
where $c(t,\o):=-\td{c}+C\|u_{t}(\o)\|_{V}^{\a}$ and
\begin{align*}
\tilde{f}(t,\o)=C\left(1+\|u_{t}\|_{V}^{\a}+\|u_{t}\|_{H}^{2}+\|u_{t}\|_{H}^{2}\|u_{t}\|_{V}^{\a}\right)
\end{align*}
for some $C, \td {c}>0$.

Note that  $\td {c}$ does not depend on $\sigma$.
 For a.e.\ $t\ge s$ we obtain
\begin{align*}
\frac{d}{dt}\|Z(t,s;\o)x\|_{H}^{2} & =2\ _{V^{*}}\<A_{\o}(t,Z(t,s;\o)x),Z(t,s;\o)x\>_{V}\\
 & \le c(t, \o)\|Z(t,s;\o)x\|_{H}^{2}+\td f(t,\o).
\end{align*}
By Theorem \ref{thm:nonlinear_OU}, for sufficiently large $\sigma$, there is a subset $\O_{0}\subseteq\O$ of full $\P$-measure such that
\[
\frac{1}{-s}\int_{s}^{0}\|u_{\tau}(\o)\|_{V}^{\a}d\tau\to\E\|u_{0}\|_{V}^{\a}<\frac{\td {c}}{2C},\quad\text{for }s\to-\infty
\]
and $\|u_{t}(\o)\|_{H}^{2}\|u_{t}(\o)\|_{V}^{\a}$ is exponentially integrable for all $\o\in\O_{0}$.

Hence, there is an $s_{0}(\o)\le0$ such that
\[
\frac{1}{-s}\int_{s}^{0}\left(-\td {c}+C\|u_{\tau}\|_{V}^{\a}\right)\ d\tau\le- \frac{\td {c}}{2},
\]
for all $s\le s_{0}(\o)$, $\o\in\O_{0}$ and some $\td {c}>0$.

Let $D\in\mcD$, $x_{s}(\o)\in D(\theta_{s}\o)$.%
\begin{comment}
\begin{proof}
i.e.\ there is a function $C:\O\to\R$ such that $\|D(\theta_{s}\o)\|_{H}^{2}e^{\delta(\o)s}\le C(\o)$ for all $s$ small enough.\end{proof}
\end{comment}
{} For some $\td s_{0}=\td s_{0}(D;\o)$, by Gronwall's lemma we obtain
\begin{equation}
\begin{split}  &  \|Z(0,s;\o) x_{s}(\o)\|_{H}^{2} \\
 \le & \|x_{s}(\o)\|_{H}^{2} e^{ \frac{\td c s }{2} } + \int_{s}^{s_{0}} \td e^{\frac{\td {c}}{2} } f(r,\o)dr  + \int_{s_{0}}^{0} e^{\int_{r}^{0} ( - \td{c} + C\| u_{r} \|_{V}^{\a}) d\tau } \td{f}(r,\o) dr \\
  \le & ~1 + \int_{-\infty}^{s_{0}} e^{ \frac{\td {c}}{2} r} \td f(r,\o) dr + \int_{s_{0}}^{0} e^{\int_{r}^{0} ( - \td{c} + C\| u_{r} \|_{V}^{\a}) d\tau } \td{f}(r,\o) dr \\
  =: & R(\o),\quad\forall s\le\td s_{0},\ \P-\text{a.s.,}
\end{split}
\label{eqn:bdd_abs}
\end{equation}
where the finiteness of the second term follows from the exponential integrability of $\td f$.

Since $T(t,\o)=T(\t_{t}\o)$ is a bounded tempered map, we find bounded absorption for $S(t,s;\o)$.
\end{proof}

\begin{proof}[Proof of Theorem \ref{thm:superlinear_ra}]
$(i)$  Compactness of the cocycles $S(t,s;\o)$, $Z(t,s;\o)$ follows as in \cite[Theorem 3.1]{G13-4}.

$(ii)$   We prove that $Z(t,s;\o)x$ is $\mcD$-asymptotically compact. By Proposition \ref{prop:bdd_abs} there is a random, bounded $\mcD$-absorbing set
$F$. Let
\[
K(\o):=\overline{Z(0,-1;\o)F(\theta_{-1}\o)},\ \forall\o\in\O.
\]
Since $F(\theta_{-1}\o)$ is a bounded set and $Z(t,s;\o)$ is a compact flow, $K(\o)$ is compact. Furthermore, $K(\o)$ is $\mcD$-absorbing:
\begin{align*}
Z(0,s;\o)D(\theta_{s}\o) & =Z(0,-1;\o)Z(-1,s;\o)D(\theta_{s}\o)\\
 & \subseteq Z(0,-1;\o)F(\theta_{-1}\o)\subseteq K(\o),
\end{align*}
for all $s\le s_{0}$ $\P$-almost surely. By Theorem \ref{thm:suff_cond_attr} this yields the existence of a random $\mcD$-attractor for $Z(t,s;\o)$ and
thus, by Theorem \ref{thm:conj_attractor} for $S(t,s;\o)$. %
\begin{comment}
\begin{proof}
Restricting the domain of attraction of $\mcA_{Z},\mcA_{S}$ to the system of all tempered sets $\mcD$ thus yields the claim.\end{proof}
\end{comment}

\end{proof}

\section{Examples} \label{sec:appl}

%\com{1. In all the examples: Give references for previous work\\
%2. Mention all examples that are included (also monotone ones). At least very briefly, i.e. at least put the equation somewhere.
%Maybe a section on monotone examples... Including: RDE, PME, p-Laplace (done)}
%
%\textcolor{red}{4. More examples that should be checked for applicability: }
%\begin{itemize}
%\item \textcolor{red}{Ladyshenskaja model: \cite{DZ10} (works, Ex. 6.5)}
%\item \textcolor{red}{non-newtonian model: \cite{GG10} (works, Ex. 6.5)}
%\item \textcolor{red}{Zhao, Li: $p$-Laplace type SPDE \cite{ZL11} (works, but trivial since monotone)}
%\item \textcolor{red}{Gao, Sun: primitive equations \cite{GS09} (doesn't seem to work)}
%\item \textcolor{red}{surface growth model (generalized coercivity paper) (doesn't seem to work since we only get local-in-time existence of solutions)}
%\item \textcolor{red}{Yang, Kloeden: \cite{YK11} (works, but trivial since monotone)}
%\end{itemize}
The main results of Theorems \ref{thm:generation} and \ref{thm:superlinear_ra} are applicable to a large class of SPDE, which not only
generalizes/improves many existing results but also can be used to obtain the existence of random attractors for some new examples.
In this section, we mostly present those stochastic equations with a locally monotone operator in the drift,
hence the existing results of \cite{GLR11,G13-4,G13} concerning only monotone operators are not applicable to those examples. We gather the examples considered in these papers at the end of this section.

Here is an overview of the examples considered:
In Section \ref{sub:Burgers} we study general Burgers-type equations. Sections \ref{sub:Stochastic-2D-Navier-Stokes} and \ref{sub:Leray} are devoted to Newtonian fluids, in particular we study the 2D Navier-Stokes equations and the 3D Leray-$\alpha$ model. More similar examples where the framework can be applied are summarized in Remark \ref{rem:HD-eqns}. We then move on to non-Newtonian fluids in Sections \ref{sub:Power-Law-Fluids} and \ref{sub:Ladyzhenskaya}, where power law fluids and the Ladyzhenskaya model are discussed. Sections \ref{sub:CH} and \ref{sub:KS} are concerned with Cahn-Hilliard-type equations in the sense of\cite{NC90} and general Kuramoto-Sivashinsky-type equations. Finally, in Section \ref{sub:monotone} we show how the aforementioned equations with monotone operators can be embedded into framework presented here.

\textbf{Notations} In this section we use $D_{i}$ to denote the spatial derivative $\frac{\partial}{\partial x_{i}}$ and $\Lambda\subseteq\mathbb{R}^{d}$
is supposed to be an open, bounded domain with smooth boundary and outward pointing unit normal vector $n$ on $\partial\Lambda$. For the Sobolev space
$W_{0}^{1,p}(\Lambda,\R^{d})$ $(p\ge2)$ we always use the following (equivalent) Sobolev norm
\[
\|u\|_{1,p}:=\left(\int_{\Lambda}|\nabla u(x)|^{p}dx\right)^{\frac{1}{p}}.
\]
Most examples below will deal with equations for vector-valued quantities. However, in some examples like those of Sections \ref{sub:Burgers}, \ref{sub:CH} and \ref{sub:KS}, we are in the scalar-valued case. We use the same notation for $L^{p}$ and Sobolev spaces in either case, as there is no risk of confusion.
Thus, for $p\ge1$, let $L^{p}$ denote either the vector-valued $L^{p}$-space $L^{p}(\L,\R^{d})$ or the scalar-valued $L^{p}$-space $L^{p}(\L,\R)$, with norm $\|\cdot\|_{L^{p}}$.

For an $\R^{d}$-valued function
$u:\L\to\mathbb{R}^{d}$ we define

\[
u\cdot\nabla=\sum_{j=1}^{d}u_{j}\partial_{j}
\]
and for an $\R^{d\times d}$-valued function $M:\L\to\mathbb{R}^{d\times d}$
\[
\text{div}\left(M\right)=\left(\sum_{j=1}^{d}\partial_{j}M_{i,j}\right)_{i=1}^{d}.
\]

For the reader's convenience, we recall the following Gagliardo-Nirenberg interpolation inequality (cf. e.g.\cite[Theorem 2.1.5]{taira:95}).

If $m,n\in\mathbb{N}$ and $q\in[1,\infty]$ such that
\[
\frac{1}{q}=\frac{1}{2}+\frac{n}{d}-\frac{m \theta}{d},\ \frac{n}{m}\le\theta\le1,
\]
then there exists a constant $C>0$ such that
\begin{equation}
\|u\|_{W^{n,q}}\le C\|u\|_{W^{m,2}}^{\theta}\|u\|_{L^{2}}^{1-\theta},\ \ u\in W^{m,2}(\Lambda).\label{GN_inequality}
\end{equation}
In particular, if $d=2$, we have the following well-known estimate on $\mathbb{R}^{2}$ (cf. \cite{T01,LR10}):
\begin{equation}
\|u\|_{L^{4}}^{4}\le C\|u\|_{L^{2}}^{2}\|\nabla u\|_{L^{2}}^{2},\ u\in W_{0}^{1,2}(\Lambda).\label{2d}
\end{equation}

\subsection{Stochastic Burgers type and reaction diffusion equations}\label{sub:Burgers}

We consider the following semilinear stochastic equation
\begin{align}
\begin{split}\label{rde}dX_{t}=\left(\Delta X_{t}+\sum_{i=1}^{d}f_{i}(X_{t})D_{i}X_{t}+f_{0}(X_{t})\right)dt+dN_{t},\end{split}
\end{align}
for the scalar quantity $X$ on $\L$. Let $(N_{t})_{t \in \R}$ be an $L^{2}(\Lambda)$-valued two-sided L\'{e}vy process satisfying $(N)$. Suppose the coefficients satisfy the following conditions:
\begin{enumerate}
\item [(i)] $f_{i}$ is Lipschitz on $\mathbb{R}$ for all $i=1,\ldots,d$;
\item [(ii)] $f_{0}\in C^{0}(\R)$ satisfies
\begin{equation}
\begin{split}|f_{0}(x)| & \le C(\vert x\vert^{r}+1),\ x\in\mathbb{R};\\
(f_{0}(x)-f_{0}(y))(x-y) & \le C (1+|y|^{s})(x-y)^{2},\ x,y\in\mathbb{R}.
\end{split}
\label{c1}
\end{equation}
where $C,r,s$ are some positive constants. \end{enumerate}
\begin{example}
\label{thm:burgers_RDE}Assume

(1) If $d=1,r=2,s=2$,

(2) If $d=2,r=2,s=2$, and $f_{i},i=1,2,3$ are bounded,

(3) If $d=3,r=2,s=\frac{4}{3}$ and $f_{i},i=1,2,3$ are bounded measurable functions which are independent of $X_{t}$.

Furthermore assume that the constant $K$ in the condition $(A3)$ and the domain $\Lambda$ satisfy $K < \frac{\lambda}{8}$.

Then there is a continuous cocycle and a random attractor associated to $(\ref{rde})$.
\end{example}
\begin{proof} We consider the following Gelfand triple
\[
V:=W_{0}^{1,2}(\Lambda)\subseteq H:=L^{2}(\Lambda)\subseteq V^{*}=(W_{0}^{1,2}(\Lambda))^{*}
\]
and define the operator
\[
A(u) = \tilde{A}(u) + f_{0}(u) =\Delta u+\sum_{i=1}^{d}f_{i}(u)D_{i}u+f_{0}(u),\ u\in V.
\]
One can show that $A$ satisfies $(A1)$--$(A3)$ with $\alpha=2$ and $\gamma = \frac{1}{2}$ and a constant $K$ (see \cite[Example 3.2]{LR10}). For $(A4)$ we note that
\[
	\| \tilde{A}u + f_{0}(u) \|_{V^{*}}^{2} \leq C \left( \| \tilde{A}u  \|_{V^{*}}^{2} + \| f_{0}(u) \|_{V^{*}}^{2} \right).
\]
The first term satisfies
\[
	\| \tilde{A}u  \|_{V^{*}}^{2} \leq C (1 + \| u \|_{V}^{2} ) (1 + \| v \|_{H}^{\nu}),
\]
where $\nu = 2$ in case (1) and $\nu = 0$ in case (2). For the second term we note that by applying H\"older's inequality and \eqref{GN_inequality}
\begin{align*}
	|~_{V^{*}}\langle f_{0}(u), v \rangle_{V} |^{2} &\leq \begin{cases}
	\| v \|_{L^{\infty}}^{2} \left( 1 + \| u \|_{L^{2}}^{4} \right),	\quad  &d=1 \\
	\| v \|_{L^{2}}^{2} \left( 1 + \| u \|_{L^{4}}^{4} \right),	\quad  &d=2 \\
	\| v \|_{L^{6}}^{2} \left( 1 + \| u \|_{L^{12/5}}^{4} \right),	\quad  &d=3
\end{cases} \\
	&\leq \| v \|_{V}^{2} (1 + \| u \|_{V}^{2} \| u \|_{H}^{2}).
\end{align*}
Thus, $(A4)$ holds with $\alpha = \beta = 2$.

Note that $(A5)$, $(V)$ and (\ref{conditon on eta and rho}) hold obviously with $M=\D$, therefore, the assertion follows from Theorem
\ref{thm:generation} and Theorem \ref{thm:superlinear_ra}. \end{proof}

\begin{rem}
(1) If $d=1$, one may take $f_{1}(x)=x$ such that Theorem \ref{thm:burgers_RDE} can be applied to the classical stochastic Burgers equation
(i.e. $(\ref{rde})$ with $f_{0}\equiv0$). Note that we may also allow a polynomial perturbation $f_{0}$ in the drift of $(\ref{rde})$. Hence, Theorem \ref{thm:burgers_RDE} also covers
stochastic reaction-diffusion-type equations. Due to the restrictions of the variational approach to (S)PDE we can only consider reaction terms of at most quadratic growth. However, as outlined in \cite[Remark 4.6]{G13}, the main ideas apply to SRDE with higher-order reaction terms as well, e.g. using the mild approach to SPDE.

(2) The stochastic Burgers equation has been studied intensively over the last decades. E, Khanin, Mazel and  Sinai \cite{EKMS00} proved the existence of singleton random attractors in 1D for periodic boundary conditions and noise of spatial regularity $C^{3}$. Iturriaga and Khanin in \cite{IK03} generalized these periodic results to the multidimensional case with spatial $C^{4}$ noise. Bakhtin \cite{Bak07} studied the case on $[0,1]$ with random boundary conditions of Ornstein-Uhlenbeck-type. The case on the whole space driven by a space-time homogeneous Poisson point field was studied by Bakhtin, Cator and Khanin \cite{BCK14}.

In \cite{DPD07}, Da Prato and Debussche study the stochastic  Burgers equation on an interval with Dirichlet boundary conditions and for cylindrical Wiener noise. They note \cite[Remark 2.4]{DPD07} that one can prove existence of a random attractor using essentially the same techniques as \cite{CF94}. The theorems proved in the present paper extend the above results to the case of more general, rougher noise as well as to the more general class of equations of the form \eqref{rde}.
\end{rem}

\subsection{Stochastic 2D Navier-Stokes equation and other hydrodynamical models}\label{sub:Stochastic-2D-Navier-Stokes}

The next example is stochastic 2D Navier-Stokes equation driven by additive noise. The Navier-Stokes equation is an important model in
fluid mechanics to describe the time evolution of incompressible fluids. It can be formulated as follows
\begin{equation}
\begin{split}\partial_{t}u(t) & =\nu\Delta u(t)-(u(t)\cdot\nabla)u(t)+\nabla p(t)+f,\\
\text{div}(u) & =0,\ u|_{\partial\Lambda}=0,~u(0)=u_{0},
\end{split}
\label{3D NSE}
\end{equation}
where $u(t,x)=(u^{1}(t,x),u^{2}(t,x))$ represents the velocity field of the fluid, $\nu$ is the viscosity constant, $p(t,x)$ is the pressure and $f$ is a
(known) external force field acting on the fluid. The stochastic version was first considered by Bensoussan and Temam in \cite{BT73} and has since been studied intensively. Random attractors for additive (as well as linear multiplicative) Wiener noise were first obtained by Crauel and Flandoli \cite{CF94}.

 As usual we define 
 (cf. \cite[Theorems 1.4 and 1.6]{T01}):
\begin{equation}
\begin{split}H & =\left\{ u \in L^{2}(\Lambda; \mathbb{R}^{2}):\ \nabla\cdot u=0\ \text{in}\ \Lambda,\ u\cdot n=0\ \text{on}\ \partial\Lambda\right\} ;\\
V & =\left\{ u\in W_{0}^{1,2}(\Lambda; \mathbb{R}^{2}):\ \nabla\cdot u=0\ \text{in}\ \Lambda\right\} .
\end{split}
\label{eq:div_free_spaces}
\end{equation}
 The Helmholtz-Leray projection $P_{H}$ and the Stokes operator $L$ with viscosity constant $\nu$ are defined by
\[
P_{H}:L^{2}(\Lambda,\mathbb{R}^{2})\rightarrow H,\ \text{orthogonal projection};
\]
\[
L:H^{2,2}(\Lambda,\mathbb{R}^{2})\cap V\rightarrow H,\ Lu=\nu P_{H}\Delta u.
\]
We thus arrive at the following abstract formulation of the Navier-Stokes equation
\begin{equation}
u'=Lu+F(u)+f,\ u(0)=u_{0}\in H,\label{NSE}
\end{equation}
where $f\in H$ (for simplicity we write $f$ for $P_{H}f$ again) and
\begin{align*}
F:V\times V\rightarrow V^{\ast},\ F(u,v) :=-P_{H}\left[\left(u\cdot\nabla\right)v\right],\ F(u):= F(u,u).
\end{align*}
It is well known that $F:V\times V\to V^{\ast}$ is well-defined and continuous. Using the Gelfand triple $V\subset H\equiv H^{*}\subset V^{*}$, one sees
that $L$ extends by continuity to a map $L:V\to V^{*}$.
Now we consider a random forcing and thus obtain the stochastic 2D Navier-Stokes equation
\begin{equation}
dX_{t}=\left(LX_{t}+F(X_{t})+f\right)dt+dN_{t}, \label{SNSE}
\end{equation}
where $(N_{t})_{t \in \R}$ is a two-sided trace-class L\'{e}vy process in $H$ satisfying $(N)$.

\begin{example} (Stochastic 2D Navier-Stokes equation) There exists a continuous cocycle and a random attractor associated to $(\ref{SNSE})$.
\end{example}

\begin{proof} According to the result in \cite[Example 3.3]{LR10},
$(A1)$--$(A4)$ hold with $\alpha=\beta=2$, $\eta\equiv0$ and $\rho(v)=\|v\|_{L^{4}}^{4}$ and $K=0$. $(A5)$, $(V)$ and (\ref{conditon
on eta and rho}) hold obviously
(with $M:=L$). Therefore, the assertion follows from Theorem \ref{thm:generation} and Theorem \ref{thm:superlinear_ra}. \end{proof}

\begin{rem}\label{rem:HD-eqns}
(1) The above result improves the classical results in \cite[Theorem 7.4]{CF94}  and \cite[Example 3.1]{CDF97} by allowing more general types of noise.
Besides L\'{e}vy-type noise being allowed here, even for Wiener-type noise, we don't need impose
any further assumptions on the noise except those needed for the well-posedness of the equation.

(2) As we mentioned in the introduction, many other
hydrodynamical systems also satisfy the local monotonicity $(A2)$ and coercivity condition $(A3)$.
For example, Chueshov and Millet \cite{CM10} studied well-posedness and large deviation principles for abstract stochastic semilinear
equations (driven by Wiener noise), covering a wide class of fluid dynamical models.
%In fact, the Condition $(C1)$ and $(C2)$ in \cite{CM10}
%implies that the assumptions in Theorem \ref{thm:superlinear_ra} hold. More precisely, $(2.2)$ in \cite{CM10} implies the coercivity $(A3)$ holds, and
%the local monotonicity $(A2)$ follows from $(2.4)$ (or $(2.8)$) in \cite{CM10}. The remaining assumptions in Theorem \ref{thm:superlinear_ra} can also
%easily be verified.
In fact, they consider abstract equations of the form
$$
	du(t) = (Lu(t) + B(u(t),u(t)) + \mathcal{R}u(t)) dt + \sigma(u(t)) dW(t).
$$
The operator  $L$ is a linear unbounded, self-adjoint and negative definite operator with $V = D((-L)^{1/2})$, $H$ is a separable Hilbert space such that the Gelfand triple $V \subseteq H \subseteq V^{*}$ holds. In \cite{CM10}, the inclusions do not have to be compact, but we have to assume this. $\mathcal{R} \colon H \rightarrow H$ is a bounded linear operator. The bilinear operator $B$ satisfies certain continuity, symmetry and interpolation/growth conditions, cf.\ \cite[$(C1)$]{CM10}.
These assumptions imply the conditions of this article:

$(A1)$ is clear by the continuity assumptions on the operators. $(A2)$ has been shown in \cite[Eq. (2.8)]{CM10} for the operator $B$. For the other two operators this follows immediately. $(A3)$ with $\alpha = 2$, $\gamma = 1$ and $K = \| \mathcal{R} \|$ follows as by assumption $_{V^{*}}\langle B(v,v), v \rangle_{V} = 0$, and $(A4)$ with $\beta = 2$ is implied by
\begin{align*}
	| ~_{V^{*}}\langle B(v,v), u \rangle_{V} | = | ~_{V^{*}}\langle B(v,u), v \rangle_{V} | \leq C \|u \|_{V} \| v \|_{H} \| v \|_{V}.
\end{align*}
As we assumed bounded domains, $(A5)$ holds and finally $(V)$ holds for $M = L$. Since $\alpha = 2$, we get the additional constraint $K = \| \mathcal{R} \| < \frac{\lambda}{4}$.

Therefore, Theorem \ref{thm:generation} and Theorem \ref{thm:superlinear_ra} can be applied to show the existence of a continuous cocycle and of a random
attractor for all the hydrodynamical models studied in \cite{CM10} driven by additive L\'{e}vy-type noise.
These models include stochastic magneto-hydrodynamic equations, the stochastic Boussinesq model for the B\'{e}nard convection, the stochastic 2D
magnetic B\'{e}nard problem and the stochastic 3D Leray-$\alpha$ model driven by additive noise. For brevity we shall restrict our attention to one further example, namely the stochastic 3D Leray-$\alpha$ model. \end{rem}

\subsection{Stochastic 3D Leray-$\alpha$ model}\label{sub:Leray}

We now apply the main result to the 3D Leray-$\alpha$ model of turbulence, which is a regularization of the 3D Navier-Stokes equation and was first
considered by Leray \cite{L34} in order to prove the existence of a solution to the Navier-Stokes equation in $\mathbb{R}^{3}$. Here we use a special
smoothing kernel, which goes back to Cheskidov, Holm, Olson and Titi \cite{CH05} (cf.\cite{CTV07} for more references). It has been shown there that the 3D Leray-$\alpha$ model
compares successfully with experimental data from turbulent channel and pipe flows for a wide range of Reynolds numbers and therefore has the potential to
become a good sub-grid-scale large-eddy simulation model for turbulence. The (deterministic) Leray-$\alpha$ model can be formulated as follows:
\begin{equation}
\begin{split}\label{3D} & \partial_{t} u =\nu\Delta u-(v\cdot\nabla)u-\nabla p+f,\\
 & \div(u)=0,\ u_{|\partial\L}=0,\ u=v-\varepsilon^{2}\Delta v,
\end{split}
\end{equation}
where $\nu>0$ is the viscosity, $u$ is the velocity, $p$ is the pressure and $f$ is a given body-forcing term. Using the same divergence-free Hilbert
spaces $V$ and $H$ as in (\ref{eq:div_free_spaces}) (but in the 3D case), one can rewrite the stochastic Leray-$\alpha$ model in the following abstract form:
\begin{equation}
dX_{t}=\left(LX_{t}+F(X_{t},X_{t})+f \right)dt+dN_{t},\label{Leray}
\end{equation}
where $f\in H$, $(N_{t})_{t \in \R}$ is a trace-class L\'{e}vy process in $H$ satisfying condition $(N)$, and
\[
Lu=\nu P_{H}\Delta u,\ \ F(u,v)=-P_{H}\left[\left(\left(I-\varepsilon^{2}\Delta\right)^{-1}u\cdot\nabla\right)v\right].
\]
The stochastic 3D Leray-$\alpha$ model was studied by Deugoue and Sango in \cite{DS10} and Chueshov and Millet in \cite{CM10} for the case of  Brownian motion noise. The inviscid case $\nu = 0$ was investigated by Barbato, Bessaih and Ferrario in \cite{BBF14}. The model has also been extended to the case of 3D MHD equations by Deugou\'{e}, Razafimandimby and Sango in \cite{DRS12}.

\begin{example}\label{exa4}(Stochastic 3D Leray-$\alpha$ model)
There exists a continuous cocycle and a random attractor associated to $(\ref{Leray})$.
\end{example}
\begin{proof} Conditions $(A1)$--$(A4)$ have been checked above and in \cite[Example 3.6]{L11} with $\alpha = 2$, $K=0$, $\beta = 2$. Condition $(V)$ holds with $M:=L$ and $(A5)$ is clear.
The assertion now follows from Theorem \ref{thm:generation} and Theorem \ref{thm:superlinear_ra}.
\end{proof}

\begin{rem}
	To the best of our knowledge,  the existence of a random attractor seems to be new for this model.
\end{rem}

\subsection{Stochastic power law fluids}\label{sub:Power-Law-Fluids}

The next example is an SPDE model which describes the velocity field of a viscous and incompressible non-Newtonian fluid subject to random forcing
in dimension $2 \le d \le 4$. The deterministic model has been studied intensively in PDE theory (cf.\cite{FR,MNRR} and the references therein). For a vector
field $u:\Lambda\rightarrow\R^{d}$, we define the rate-of-strain tensor by
\[
e(u):\Lambda\rightarrow\R^{d}\otimes\R^{d};\ e_{i,j}(u)=\frac{\partial_{i}u_{j}+\partial_{j}u_{i}}{2},\ i,j=1,\ldots,d
\]
and we consider the case that the stress tensor has the following polynomial form:
\[
\tau(u):\Lambda\rightarrow\R^{d}\otimes\R^{d};\ \tau(u)=2\nu(1+|e(u)|)^{p-2}e(u),
\]
where $\nu>0$ is the kinematic viscosity and $p>1$ is a constant, and for $U \in \R^{d} \otimes \R^{d}$ we define $|U| = \left( \sum_{i,j = 1}^{d} |U_{ij}|^{2} \right)^{1/2}$

In the case of deterministic forcing, the dynamics of power law fluids can be modelled by the following PDE (cf.\cite[Chapter 5]{MNRR}):
\begin{equation}
\begin{split} & \partial_{t}u=\text{div}\left(\tau(u)\right)-(u\cdot\nabla)u-\nabla p+f,\\
 & \text{div}(u)=0,\ u|_{\partial\Lambda}=0,~u(0)=u_{0},
\end{split}
\label{eqn:power law fluids}
\end{equation}
where $u=u(t,x)=\left(u_{i}(t,x)\right)_{i=1}^{d}$ is the velocity field, $p$ is the pressure and $f$ is an external force.

\begin{rem} For $p=2$,  \eqref{eqn:power law fluids} describes Newtonian fluids and $(\ref{eqn:power law fluids})$ reduces to the classical Navier-Stokes equation $(\ref{3D NSE})$.

The cases $p \in (1,2)$ and $p \in (2,\infty)$ are called shear-thinning fluids and
 shear-thickening fluids, respectively. They have been widely studied in different fields of science and
engineering (cf.   e.g. \cite{FR,MNRR} and the references therein). \end{rem}

In this section, we will only consider the case $p \geq \frac{d+2}{2} \geq 2$, i.e. the shear-thickening case.

In the following we consider the Gelfand triple $V\subset H\subset V^{*},$ where
\begin{align*}
V & =\left\{ u\in W_{0}^{1,p}(\Lambda;\R^{d}):\ \nabla\cdot u=0\ \text{in}\ \Lambda\right\} ;\\
H & =\left\{ u\in L^{2}(\Lambda;\R^{d}):\ \nabla\cdot u=0\ \text{in}\ \Lambda,\ u\cdot n=0\ \text{on}\ \partial\Lambda\right\} .
\end{align*}
Let $P_{H}$ be the orthogonal (Helmholtz-Leray) projection from $L^{2}(\L,\R^{d})$ to $H$. As in Example
\ref{sub:Stochastic-2D-Navier-Stokes}, the operators
\[
\mathcal{N}:W^{2,p}(\Lambda;\R^{d})\cap V\rightarrow H,\ \mathcal{N}(u):=P_{H}\left[\text{div}(\tau(u))\right];
\]
\begin{align*}
&F: \left( W^{2,p}(\Lambda;\R^{d})\cap V \right) \times \left( W^{2,p}(\Lambda;\R^{d})\cap V \right) \rightarrow H;\ \\
&F(u,v):=-P_{H}\left[(u\cdot\nabla)v\right],\ F(u):=F(u,u)
\end{align*}
can be extended to the well defined operators:
\[
\mathcal{N}:V\rightarrow V^{*};\ F:V\times V\rightarrow V^{*}.
\]
In particular, one can show that
\[
_{V^{*}}\<\mathcal{N}(u),v\>_{V}=-\int_{\Lambda}\sum_{i,j=1}^{d}\tau_{i,j}(u)e_{i,j}(v)dx,\ u,v\in V;
\]
\[
_{V^{*}}\<F(u,v),w\>_{V}=-{}_{V^{*}}\<F(u,w),v\>_{V},\ {}_{V^{*}}\<F(u,v),v\>_{V}=0,\ u,v,w\in V.
\]
Now (\ref{eqn:power law fluids}) with random forcing can be reformulated in the following abstract form:
\begin{equation}
dX_{t}=(\mathcal{N}(X_{t})+F(X_{t})+f)dt+dN_{t},\label{PLF}
\end{equation}
with $f\in H$ and $N_{t}$ being a trace-class L\'{e}vy process in $H$ satisfying the condition $(N)$.

\begin{example}(Stochastic power law fluids) Suppose that $2 \leq d \leq 4$ and $p \in \left[ \frac{d+2}{2}, 3 \right]$, then
there exists a continuous cocycle and a random attractor associated to $(\ref{PLF})$. \end{example}

\begin{proof}
From \cite[Example 3.5]{LR12} we know that $(A1)$ and $(A2)$ hold with $\rho(v)=C_{\varepsilon}\|v\|_{V}^{\frac{2p}{2p-d}}$ and
$\eta\equiv0$, and the operator $M := \mathcal{N}$ is in fact strongly monotone. $(A3)$ holds with $\alpha=p$ and $K = 0$. Furthermore, we have
\[
\|F(v)\|_{V^{*}}\le\|v\|_{L^{\frac{2p}{p-1}}}^{2},\ v\in V.
\]
An application of the Gagliardo-Nirenberg interpolation inequality \eqref{GN_inequality} yields
\[
\|v\|_{L^{\frac{2p}{p-1}}} \le C\|v\|_{V}^{\theta}\|v\|_{H}^{1-\theta},
\]
with $\theta=\frac{d}{(d+2)p-2d}$. Note that $2 \theta \le p-1$ if $p\ge\frac{d+2}{2}$, hence the embedding $V \subseteq H$ implies
\begin{align*}
\| F(v) \|_{V^{*}} &\leq C \|v\|_{V}^{2 \theta}\|v\|_{H}^{2(1-\theta)} \leq C \| v \|_{V}^{2 \theta} \|v\|_{H}^{(p-1) - 2\theta} \|v\|_{H}^{2(1-\theta) - ( (p-1) - 2\theta )} \\
&\leq \| v \|_{V}^{p-1} \| v \|_{H}^{3 - p} \quad \Rightarrow \quad \| F(v) \|_{V^{*}}^{\frac{p}{p-1}} \leq C \| v \|_{V}^{p} \| v \|_{H}^{\frac{(3-p)p}{p-1}},
\end{align*}
which implies $\alpha = p$, $\beta = \frac{(3-p)p}{p-1}$. Since $p \le 3$, we get $\beta \ge 0$. The condition $\beta(\alpha-1) \le 2$ is equivalent to $(3-p)p \leq 2$, which is satisfied for $p \geq 2$.

It is also easy to see that
\[
\|\mathcal{N}(v)\|_{V^{*}}\le C(1+\|v\|_{V}^{p-1}),\ v\in V.
\]
Hence the growth condition $(A4)$ holds with the above $\alpha$ and $\beta$. $(V)$ and $(A5)$ are clearly satisfied. The assertion now follows from Theorem \ref{thm:generation} and Theorem \ref{thm:superlinear_ra} .
\end{proof}

\subsection{Stochastic Ladyzhenskaya model}\label{sub:Ladyzhenskaya}
The Ladyzhenskaya model is a higher order variant of the power law fluid where the stress tensor has the form
$$
	\tau(u) \colon \Lambda \rightarrow \R^{d} \otimes \R^{d}, \quad \tau(u) = 2 \mu_{0}(1 + |e(u)|^{2})^{\frac{p-2}{2}} e(u) - 2 \mu_{1} \Delta e(u) = \tau^{\mathcal{N}}(u) + \tau^{\mathcal{L}}(u).
$$
The model was pioneered by Ladyzhenskaya \cite{L70} and further analyzed by various authors (see \cite{DZ10} and the references therein). Compared to the power law fluids considered above, there is an additional fourth order term $\nabla \cdot (- 2 \mu_{1} \Delta e(u) )$ present in the equation.

The existence of random attractors for this model has been studied for $p \in (1,2)$, i.e. shear-thinning fluids, by Duan and Zhao in \cite{DZ10} and for $p > 2$ by Guo and Guo \cite{GG10}.

In this section we will apply the general framework to this model in the case $p \in (1,3]$, recovering the results of \cite{DZ10} and parts of the results of \cite{GG10}. This restriction on $p$ allows us to understand the nonlinear term as a perturbation of the linear term. It is necessary again due to the restriction $\beta (\alpha - 1) \leq 2$ which restricts the homogeneity in $(A4)$. Furthermore, applying the Gagliardo-Nirenberg inequality \eqref{GN_inequality}, we find a "maximal" range $(1,p_{c}] \subset (1,3]$ of parameters $p$ to which the method presented in this article applies.

In what follows, the exact form of the powers in the stress tensor does not play any role, i.e. the results apply just as well to the case
$$
\tilde{\tau}^{\mathcal{N}}(u) = 2 \mu_{0}(1 + |e(u)|)^{p-2} e(u).
$$
Consider the Gelfand triple $V \subset H \subset V^{*}$, where
\begin{align*}
V & =\left\{ u\in W_{0}^{2,2}(\Lambda;\R^{d}):\ \nabla\cdot u=0\ \text{in}\ \Lambda\right\} ;\\
H & =\left\{ u\in L^{2}(\Lambda;\R^{d}):\ \nabla\cdot u=0\ \text{in}\ \Lambda,\ u\cdot n=0\ \text{on}\ \partial\Lambda\right\} .
\end{align*}

Let $P_{H}$ be the orthogonal (Helmholtz-Leray) projection from $L^{2}(\L,\R^{d})$ to $H$. Similar to Examples
\ref{sub:Stochastic-2D-Navier-Stokes} and \ref{sub:Power-Law-Fluids}, the operators
\begin{align*}
& \mathcal{N}: C^{\infty}_{c}(\Lambda;\R^{d})\cap V\rightarrow H,\ \mathcal{N}(u):=P_{H}\left[\text{div}(\tau^{\mathcal{N}}(u))\right]; \\
&\mathcal{L}: C^{\infty}_{c}(\Lambda;\R^{d})\cap V\rightarrow H,\ \mathcal{L}u:=P_{H}\left[\text{div}(\tau^{\mathcal{L}}(u))\right]; \\
&F: \left( C^{\infty}_{c}(\Lambda;\R^{d})\cap V \right) \times \left( C^{\infty}_{c}(\Lambda;\R^{d})\cap V \right) \rightarrow H; \\
&F(u,v):=-P_{H}\left[(u\cdot\nabla)v\right],\ F(u):=F(u,u);
\end{align*}
can be extended to the well defined operators:
\[
\mathcal{N}:V\rightarrow V^{*};\ \mathcal{L}:V\rightarrow V^{*} ;\  F:V\times V\rightarrow V^{*}.
\]

With these preparations, we can write the model in the abstract form
\begin{equation}
dX_{t}=(\mathcal{N}(X_{t})+\mathcal{L}X_{t} + F(X_{t})+f)dt+dN_{t},
\label{LZM}
\end{equation}
where $N_{t}$ is a two-sided L\'{e}vy-process satisfying the condition $(N)$.
We then have the following result:
\begin{example}(Ladyzhenskaya model) Let $d \leq 6$. Then there exists a $p_{c} = p_{c}(d) > 2$ such that for $p \in (1,p_{c}]$ there is a continuous cocycle and a random attractor associated to \eqref{LZM}.
\end{example}
\begin{proof}
	We note the following properties of $\tau^{\mathcal{N}}$ \cite[pp. 198, Lemma 1.19]{MNRR}:
	\begin{align}
		 (\tau^{\mathcal{N}}_{ij}(e(u)) - \tau^{\mathcal{N}}_{ij}(e(v)))(e_{ij}(u) - e_{ij}(v)) &\geq 0; \label{Lady_eq_LM} \\
		\tau^{\mathcal{N}}_{ij}(e(u)) e_{ij}(u) &\geq 0; \label{Lady_eq_coerc} \\
		|\tau^{\mathcal{N}}_{ij}(e(u))| &\leq C (1 + |e(u)|)^{p-1}. \label{Lady_eq_growth}
	\end{align}
	Furthermore, we need the following higher-order version of Korn's inequality (a proof can be found at the end of this section):
	\begin{equation}\label{Lady_eq_Korn}
			\| \nabla e(u) \|_{L^{2}} \geq C \| u \|_{H^{2,2}} \quad \forall u \in W_{0}^{2,2}(\Lambda; \mathbb{R}^{d}).
	\end{equation}
	The condition $(A1)$ is clear. For $(A2)$ we have to estimate three terms:
	\begin{enumerate}[(a)]
		\item $_{V^{*}} \langle \mathcal{N}(u) - \mathcal{N}(v), u - v \rangle_{V} = \langle \tau^{\mathcal{N}}(e(u)) - \tau^{\mathcal{N}}(e(v)) , e(u) - e(v) \rangle_{H} \leq 0$ by \eqref{Lady_eq_LM}.
		\item In this case we get by \eqref{Lady_eq_Korn}
		\begin{align*}
			_{V^{*}}\langle \mathcal{L}(u-v), u - v \rangle_{V} = - 2 \mu_{1} \| \nabla e(u-v) \|_{L^{2}}^{2} \leq - C \| u - v \|_{H^{2,2}}^{2}.
		\end{align*}
		\item We estimate
		\begin{align*}
			&_{V^{*}} \langle F(u) - F(v), u - v \rangle_{V} =\ _{V^{*}} \langle F(u - v, v) , u - v \rangle_{V} \\
			&\leq C \| \nabla v \|_{L^{q}} \| u - v \|_{L^{\frac{2q}{q-1}}}^{2} \leq C \| \nabla v \|_{L^{q}}  \| u - v \|_{V}^{2 \theta}  \| u - v \|_{H}^{2(1-\theta)} \\
			&\leq \varepsilon \| u - v \|_{V}^{2} + C_{\varepsilon} \| \nabla v \|_{L^{q}}^{\nu} \| u - v \|_{H}^{2},
		\end{align*}
		where we applied the Gagliardo-Nirenberg interpolation inequality \eqref{GN_inequality} as well as Young's inequality. Here the exponents $\theta$ and $\gamma$ are defined by
		\begin{align*}
			\theta = \frac{d}{4q}, \quad \nu = \frac{1}{1-\theta}, \quad \nu' = \frac{\nu}{\nu - 1} = \frac{1}{\theta}.
		\end{align*}
		For the above calculations to work, we need to have
		\begin{align*}
			\frac{d}{4q} = \theta \in (0,1) \quad \mathrm{and} \quad q > 1 \Leftrightarrow q \in \left(\frac{d}{4} \vee 1, \infty \right),
		\end{align*}
		On the other hand, for the term $\| \nabla v \|_{L^{q}}$ to be bounded, we need the Sobolev embedding $H^{2,2} \subset H^{1,q}$ which holds only if
		\begin{align*}
			2 - \frac{d}{2} \geq 1 - \frac{d}{q} \Leftrightarrow q \leq \frac{2d}{d-2}.
		\end{align*}
		Furthermore, to check \eqref{conditon on eta and rho}, we have to interpolate once more:
	\begin{align*}
		\| \nabla v \|_{L^{q}} \leq \| v \|_{H^{2,2}}^{\theta} \| v \|_{L^{2}}^{1-\theta},
	\end{align*}
	which implies
	$$
		\theta = \frac{qd + 2q - 2d}{4q}.
	$$
	The condition $\theta \in [\frac{1}{2} , 1)$ from \eqref{GN_inequality} implies $q \geq 2$ and the condition $\nu \theta = \frac{4q}{4q-d} \theta \leq 2$ implies $d \leq 6$.
	
		Thus, in total we have to have
		\begin{align*}
			q \in \left( \frac{d}{4} \vee 2, \frac{2d}{d-2} \right],
		\end{align*}
		which is nonempty for $1 < d < 10$.
	\end{enumerate}
	Putting the three estimates together we find
	\begin{align*}
		&_{V^{*}} \langle \mathcal{N}(u) + \mathcal{L}u + F(u) - \mathcal{N}(v) - \mathcal{L}v - F(v), u - v \rangle_{V} \\
		&\leq - (C - \varepsilon) \| u - v \|_{H^{2,2}}^{2} + C_{\varepsilon} \| \nabla v \|_{L^{q}}^{\frac{4q}{4q-d}} \| u - v \|_{H}^{2},
	\end{align*}
	i.e. $(A2)$ with $\rho(v) = C_{\varepsilon} \| \nabla v \|_{L^{q}}^{\frac{4q}{4q-d}}$ and $\eta = 0$. By the choice of $q$ and the Sobolev embedding theorem, $\rho$ is locally bounded.
	
	For assumption $(A3)$ we proceed in a similar fashion (by the incompressibility condition, the term involving $F$ is zero):
	\begin{enumerate}[(a)]
		\item $_{V^{*}} \langle \mathcal{N}(v),v \rangle_{V} = - \langle \tau^{\mathcal{N}}(e(v)) , e(v) \rangle_{H} \leq 0$ by \eqref{Lady_eq_coerc}.
		\item $_{V^{*}} \langle \mathcal{L}(v),v \rangle_{V} = - \| \nabla e(v) \|_{L^{2}}^{2} \leq - C_{1} \| v \|_{H^{2,2}}^{2} = - C_{1} \| v \|_{V}^{2} $,
	\end{enumerate}
	and thus $(A3)$ holds with $\alpha = 2$. Here we have again the case that the constant $K$ in $(A3)$ vanishes, thus the condition $K < \frac{\lambda C_{1}}{4}$ is trivially satisfied.
	
	Note that up to this point, the parameter $p$ did not appear in any of the calculations.
	
	Assumption $(A4)$ requires to calculate three terms again:
	\begin{enumerate}[(a)]
		\item For the term $\mathcal{N}$, we distinguish two cases:
		\newline
		(i) Let $1 < p \leq 2$. By \eqref{Lady_eq_growth} we find
		\begin{align*}
			| ~ _{V^{*}} \langle \mathcal{N}(v), u \rangle_{V} |
			&\leq \int_{\Lambda} | \tau^{\mathcal{N}}(e(v)) | |e(u)| dx
			\leq C \int_{\Lambda} (1 + |e(v)|)^{p-1}  |e(u)| dx \\			
			&\leq C (1 + \| e(v) \|_{L^{p}}^{p-1}) \| e(u) \|_{L^{p}} \leq C (1 + \| v \|_{V}^{p-1}) \| u \|_{V} \\
			&\leq C(1 + \| v \|_{V}) \| u \|_{V}.
		\end{align*}
		(ii) Now let $p > 2$. Again, applying \eqref{Lady_eq_growth} we get
		\begin{align*}
			| ~ _{V^{*}} \langle \mathcal{N}(v), u \rangle_{V} |
			&\leq \int_{\Lambda} | \tau^{\mathcal{N}}(e(v)) | |e(u)| dx
			\leq C \int_{\Lambda} (1 + |e(v)|)^{p-1}  |e(u)| dx \\			
			&\leq C (1 + \| e(v) \|_{L^{p}}^{p-1}) \| e(u) \|_{L^{p}} \leq C ( 1 + \| v \|_{H^{1,p}}^{p-1}) \| u \|_{H^{1,p}} \\
			&\leq C \left( 1 + \| v \|_{V}^{\theta (p-1)} \| v \|_{H}^{(1 - \theta) (p-1)} \right) \| u \|_{V}
		\end{align*}
		where we used the Sobolev embedding $V = H^{2,2} \subset H^{1,p}$ which holds for $p \leq \frac{2d}{d-2}$ and the Gagliardo-Nirenberg inequality \eqref{GN_inequality} with
		$$
			\theta = \frac{dp + 2p - 2d}{4p},
		$$
		which has to be in $\left[ \frac{1}{2},1 \right)$. However, since $\alpha = 2$, we need that $\theta(p-1) \leq 1$. As long as $p \leq 2$ this condition is always satisfied. For $p > 2$ this is more difficult. We want to have
		\begin{align*}
			1 \geq \theta (p-1) \Leftrightarrow 0 \geq p^{2} - 3p + \frac{2d}{d+2}.
		\end{align*}
		We see that the latter condition is always strictly satisfied for $p=2$ but never satisfied for $p=3$. The critical value of $p$ can be calculated as
		\begin{align*}
			p_{c} = \frac{3}{2} + \frac{1}{2}\sqrt{\frac{d+18}{d+2}}.
		\end{align*}
		As $d>1$ we find that $p_{c} < 2.618$. As $d \leq 6$ we find $p_{c} \geq \frac{3}{2} + \frac{1}{2} \sqrt{\frac{24}{8}} \approx 2.36$.
		
	This leaves us with two conditions for this range of $p$:
	$$
		2 < p \leq \frac{2d}{d-2} \wedge p_{c} = p_{c}.
	$$
		\item $| ~ _{V^{*}} \langle \mathcal{L}v, u \rangle_{V} | = | \langle \nabla e(v) ,\nabla e(u) \rangle_{L^{2}}| \leq \| \nabla e(v) \|_{L^{2}} \| \nabla e(u) \|_{L^{2}}  \leq \| v \|_{V} \| u \|_{V} $
		\item For the last term we find
		\begin{align*}
			| \langle F(v,v), u \rangle | = | \langle F(v,u),v \rangle | \leq C \| \nabla u \|_{L^{q}} \| v \|_{L^{\frac{2q}{q-1}}}^{2} \leq C \| u \|_{V} \| v \|_{V}^{2\theta} \| v \|_{H}^{2 (1-\theta)}
		\end{align*}
				where we have taken the biggest possible $q$, $q = \frac{2d}{d-2}$, and where $\theta = \frac{d-2}{8}$ and since $\alpha = 2$ we again need to have
		$$
			2 \theta \leq 1 \Leftrightarrow d \leq 6.
		$$
	\end{enumerate}
	The conditions $(V)$ and $(A5)$ are easily seen to be satisfied.
\end{proof}

\begin{proof}[Proof of \eqref{Lady_eq_Korn}]
	 The classical Korn inequality states that
	 \begin{equation}
	 	\int_{\Lambda} | e(u) |^{2} dx \geq C \| u \|_{H^{1,2}}^{2} \quad \forall u \in H_{0}^{1,2}(\Lambda; \mathbb{R}^{d}). \label{Lady_eq_classKorn}
	 \end{equation}
	 We would like to set $u = \nabla v$ for $v \in H_{0}^{2,2}(\Lambda; \mathbb{R}^{d})$. Note that
	 \begin{align*}
	 	\left( \nabla e(v) \right)_{k} = \left( \partial_{k} e_{ij}(v) \right)_{i,j=1}^{d} = \frac{1}{2} \left(\partial_{i} (\partial_{k}v)_{j} + \partial_{j} (\partial_{k}v)_{i}  \right) = e(\partial_{k} v).
	 \end{align*}
	 Now by applying \eqref{Lady_eq_classKorn} to the vector $\partial_{k}v$ for fixed $k$, we find
	 \begin{align*}
	 	\int_{\Lambda} | \nabla e(v) |^{2} dx &= \sum_{k} \int_{\Lambda} | \partial_{k} e(v) |^{2} dx = \sum_{k} \int_{\Lambda} | e(\partial_{k} v) |^{2} dx \\
	 	&\geq C \sum_{k} \| \partial_{k} v \|_{H^{1,2}}^{2} = C \sum_{k} \sum_{i,j} \| \partial_{i} \partial_{k} v_{j} \|_{L^{2}}^{2} \\
	 	&= C \sum_{k} \sum_{i} \| \partial_{i} \partial_{k} v \|_{L^{2}(\Lambda; \mathbb{R}^{d})}^{2}
	 	= C \| v \|_{H^{2,2}(\Lambda; \mathbb{R}^{d})}^{2}.
	 \end{align*}
\end{proof}

\subsection{Stochastic Cahn-Hilliard type equations}\label{sub:CH}

The Cahn-Hilliard equation is a classical model to describe phase separation in a binary alloy. The reader is referred to Novick-Cohen \cite{N98} for a survey of the classical Cahn-Hilliard equation (see also Da Prato, Debussche \cite{DD96} and Elezovi\'{c}, Mikeli\'{c} \cite{EM91} for the stochastic case) and to \cite{NC90} for Cahn-Hilliard type equations. Let $d\le3$. We want to study stochastic
Cahn-Hilliard type equations of the following form:
\begin{equation}
\begin{split} & dX= \left( -\Delta^{2}X+\Delta\varphi(X) \right) dt + dN_{t},\ X(0)=X_{0},\\
 & \nabla X\cdot n=\nabla(\Delta X)\cdot n=0\ \ \text{on}\ \ \partial\Lambda,
\end{split}
\label{Cahn-Hilliard}
\end{equation}
where $X$ is a scalar function, $N_{t}$ is an $L^{2}(\Lambda)$-valued, two-sided L\'{e}vy process satisfying condition $(N)$, and the nonlinearity $\varphi$ is a function that will be specified below. Let
\[
V_{0}:=\{u\in H^{4,2}(\Lambda):\ \nabla u\cdot n=\nabla(\Delta u)\cdot n=0\ \text{on}\ \partial\Lambda\},
\]
where $H^{4,2}(\Lambda)$ denotes the standard Sobolev space on $\Lambda$ (with values in $\R$).

We consider the following Gelfand triple
\[
V\subset H:=L^{2}(\Lambda)\subset V^{*},
\]
where
\[
V:=\text{completion of}\ \ V_{0}\ \ w.r.t.\ \|\cdot\|_{H^{2,2}}.
\]
Recall that we use the following (equivalent) Sobolev norm on $H^{2,2}$:
\[
\|u\|_{H^{2,2}}:=\left(\int_{\Lambda}|\Delta u|^{2}dx\right)^{1/2}.
\]
Then we get the following result for (\ref{Cahn-Hilliard}).
\begin{example}(Stochastic Cahn-Hilliard type equations)
Suppose that $\varphi\in C^{1}(\R)$ and there exist some positive constants $C$ and $p\le 2$ such that
\begin{equation}\label{eq:CH_vphi}
	\begin{split}
 & \varphi^{\prime}(x)\ge-C_{\varphi},\ |\varphi(x)|\le C(1+|x|^{p}),\ x\in\R;\\
 & |\varphi(x)-\varphi(y)|\le C(1+|x|^{p-1}+|y|^{p-1})|x-y|,\ x,y\in\R.
 	\end{split}
\end{equation}
Let $C_{GN}$ be the constant from the Gagliardo-Nirenberg interpolation inequality \eqref{GN_inequality} for $H^{1,2}(\Lambda) \subset H^{2,2}(\Lambda) \cap L^{2}(\Lambda)$ and $\lambda$ the constant from the embedding $V \subset H$. Assume that $C_{\varphi} < \frac{\sqrt{\lambda}}{2 C_{GN}^{2}}$.

Then there exists a continuous cocycle and a random attractor associated to $(\ref{Cahn-Hilliard})$. \end{example}
\begin{proof}
We denote
\[
A(u):=-\Delta^{2}u+\Delta\varphi(u),\ u\in H^{4,2}(\Lambda).
\]
Note that for $u\in V_{0}$ by Sobolev's inequality (the embedding $V \subset W^{d,1} \subset L^{\infty}$ holds by our assumption on the dimension $d$) we have
\[
\begin{split}\left|_{V^{*}}\<A(u),v\>_{V}\right| & =\left|\<-\Delta u+\varphi(u),\Delta v\>_{L^{2}}\right|  \le\|v\|_{V}\left(\|u\|_{V}+\|\varphi(u)\|_{L^{2}}\right)\\
 & \le C\|v\|_{V}\left(1+\|u\|_{V}+\|u\|_{L^{\infty}}^{p}\right) \le C\|v\|_{V}\left(1+\|u\|_{V}+\|u\|_{V}^{p}\right),\ v\in V.
\end{split}
\]
Therefore, by  continuity $A$ can be extended to a map from $V$ to $V^{*}$. Moreover, this also implies that $A$ is hemicontinuous, i.e. $(A1)$ holds.

The other conditions $(A2)$--$(A4)$ as well as \eqref{conditon on eta and rho} were shown in \cite[Example 3.3]{LR12} with $\alpha = 2$, $\beta = 2 (p-1)$. As we need the exact form of the coercivity condition $(A3)$ to check the condition $K < \frac{\gamma \lambda}{4}$, we will repeat its proof.
By the interpolation inequality (\ref{GN_inequality}) and Young's inequality we have for any $v\in V$,
\begin{align*}
_{V^{*}}\<\Delta\varphi(v),v\>_{V} &=-\int_{\Lambda}\varphi^{\prime}(v)|\nabla v|^{2}dx\le C_{\varphi} \|v\|_{H^{1,2}}^{2} \le C_{\varphi} C_{GN}^{2} \| v \|_{V} \| v \|_{H} \\
&\le\frac{1}{2}\|v\|_{V}^{2}+ \frac{1}{2} C_{\varphi}^{2} C_{GN}^{4} \|v\|_{H}^{2},
\end{align*}
i.e. $(A3)$ holds with $\alpha=2$ and $K = \frac{1}{2} C_{\varphi}^{2} C_{GN}^{4}$ and $\gamma = \frac{1}{2}$. Thus by our assumption on $C_{\varphi}$, the inequality $K < \frac{\gamma \lambda}{4} = \frac{\lambda}{8}$ holds. The condition $(V)$ is satisfied as the operator $M := - \Delta^{2}$ is strongly monotone. $(A5)$ and \eqref{conditon on eta and rho} are clearly satisfied as well.
\end{proof}

\begin{rem}
	(1) Note that the technical constraint $\beta (\alpha - 1) = 2 (p-1) \leq 2$ forces $p \leq 2$, so the method does not cover the "classical" Cahn-Hilliard equation for which $\varphi$ is a double-well potential, $\varphi(u) = u^{3} - u$, $i.e.$ $p=3$.
	
	(2) The results of this article on existence of a random attractor for stochastic Cahn-Hilliard type equations seem  not have been established in the literature before.
	
\end{rem}

\subsection{Stochastic Kuramoto-Sivashinsky equation}\label{sub:KS}
	The Kuramoto-Sivashinsky equation combines features of the Burgers equation with the Cahn-Hilliard type equations studied in the previous section. It was introduced in the works of Kuramoto \cite{Kuramoto78} and Michelson and Sivashinsky \cite{MS77, Sivashinsky80} as a model for flame propagation. The equation  in one spatial dimension  has the form
	\begin{equation}
		\partial_{t} u = -\partial_{x}^{4}u - \partial_{x}^{2} u - u \partial_{x} u.
	\end{equation}
	The first two terms on the right-hand side are of Cahn-Hilliard type (with $\varphi(x) = x$), the last term is of Burgers type. We will briefly show the existence of a continuous cocycle as well as a random attractor in the periodic case for a slightly generalized model.
	\begin{example}[Stochastic Kuramoto-Sivashinsky equation] Let $\Lambda = (-L,L)$, $L > 0$ and $p \le 2$. Let $\varphi \in C^{1}(\mathbb{R})$ satisfy the conditions \eqref{eq:CH_vphi} as well as $C_{\varphi} < \frac{\sqrt{\lambda}}{2C_{GN}^{2}}$, where $C_{GN}$ is as in Section \ref{sub:CH}. Furthermore, let $N_{t}$ be an $H$-valued two-sided L\'{e}vy process satisfying condition $(N)$. The space $H$ will be defined below.
	
	Then the equation
	\begin{equation}\label{eq:SKS}
		d u = \left( -\partial_{x}^{4}u - \partial_{x}^{2} \varphi(u) - u \partial_{x} u \right) dt + dN_{t}
	\end{equation}	
 with boundary conditions
	\begin{align*}
		\partial_{x}^{i} u(-L,t) = \partial_{x}^{i} u(L,t), \quad i = 0,\ldots,3
	\end{align*}
	and initial condition $u(x,0) = u_{0}(x), x \in \Lambda$ generates a continuous cocycle and has a random attractor.
	\end{example}
\begin{proof}
	 Let
	 $$
	 	H = \left\{ u \in L^{2}(\Lambda) :\ \int_{\Lambda}u(x) dx = 0 \right\}, \quad V = H^{2}_{per} \cap H.
	 $$
	 We write
	 \begin{align*}
	 	A(u) := \mathcal{L}u + \mathcal{N}(u) + \mathcal{B}(u) := - \partial_{x}^{4} u + \partial_{x}^{2} \varphi (u) - u \partial_{x}u, \quad u \in H^{4,2}(\Lambda).
	 \end{align*}
	 $\mathcal{L}$, $\mathcal{N}$ have been extended to operators from $V$ to $V^{*}$ in Section \ref{sub:CH}, where the conditions $(A1)$--$(A4)$ were checked for them as well. That $\mathcal{B}$ is well-defined can be seen from the following calculations: by \eqref{GN_inequality} we find
	 \begin{align*}
	 	\left|~_{V^{*}}\langle u \partial_{x} u, v \rangle_{V} \right| = \frac{1}{2} \left| \langle \partial_{x} (u^{2}), v \rangle_{L^{2}} \right| \leq \frac{1}{2} \| u \|_{L^{4}}^{2} \| \partial_{x} v \|_{L^{2}} \leq C \| u \|_{V} \| u \|_{H} \| v \|_{V}.
	 \end{align*}
This not only implies the extendability but also gives the remaining contribution to $(A1)$ as well as to $(A4)$ with $\alpha = 2$, $\beta = 2$. For the local monotonicity we note that by the embeddings $H^{2,2} \subseteq H^{1,2} \subseteq W^{1,1} \subseteq L^{\infty}$, we find
\begin{align*}
	2~_{V^{*}}\langle u \partial_{x} u - v \partial_{x} v, u - v \rangle_{V} =2 \int_{\Lambda} (u-v)^{2} \partial_{x} v d x \leq C \| \partial_{x} v \|_{L^{\infty}} \| u - v \|_{H}^{2}
\end{align*}
which gives $(A2)$ with another locally bounded contribution $\rho_{\mathcal{B}}(v) = \|\partial_{x} v \|_{L^{\infty}}$. For $(A3)$ we note that $_{V^{*}}\langle \mathcal{B}(v),v \rangle_{V} = 0$. Thus the conditions $(A1)$--$(A4)$ are satisfied with $\alpha = \beta = 2$. The conditions $(A5)$, $(V)$ and \eqref{conditon on eta and rho} are again clearly satisfied.
\end{proof}
\begin{rem}
	 Yang \cite{Yang06} has studied stochastic Kuramoto-Sivashinsky equation in the case $d=1$, $\varphi(x) = -x$ with periodic boundary conditions and proved the existence of a random attractor for $H$-valued trace-class Wiener noise. The above result extends this to a more general class of equations and also to the case of L\'{e}vy noise.
\end{rem}

\subsection{SPDE with monotone coefficients} \label{sub:monotone}

In \cite{G13}, the stochastic evolution equation
\begin{align*}
	dX_{t} = A(t, X_{t}) dt + dW_{t} + \mu X_{t} \circ d \beta_{t}
\end{align*}
is considered on a Gelfand triple $V \subseteq H \subseteq V^{*}$, where the Wiener process takes values in $H$, $\mu \in \R$, $\beta_{t}$ is a real-valued Brownian motion and $\circ$ denotes Stratonovich integration. The operator $A$ in this context satisfies $(A1)$, $(A2)$ with $\rho = \eta = 0$, $(A3)$, and $(A4)$ with $\beta = 0$ and coefficients $C, \gamma, K$ depending on $(t,\omega)$. This case of a ``globally" monotone operator (typically just called {\it monotone operator}) is covered by the theorems in this work, if $\mu = 0$ and the coefficients $C, \gamma, K$ are independent of $(t,\omega)$ and satisfy $K < \frac{\gamma \lambda}{4}$.
Note that $\beta (\alpha - 1) = 0 \leq 2$ is satisfied in this case, and so is \eqref{conditon on eta and rho}.

Accordingly, all examples considered in \cite{G13} under these assumptions are covered by the results of this paper. These examples include the stochastic generalized $p$-Laplace equations on a Riemannian manifold, stochastic reaction diffusion equations, the stochastic porous media equation as well as the stochastic $p$-Laplace type equations studied by Zhao and Li \cite{ZL11} and the degenerate semilinear parabolic equation considered by Yang and Kloeden in \cite{YK11}. For more details the reader is referred to \cite{G13} and the references therein.
\appendix

\section{Existence and uniqueness of solutions to locally monotone PDE}\label{app:var_spde}

In this section we recall an existence and uniqueness result for locally monotone PDE (cf.\ \cite{L11,LR12,LR15}).
As before, let $V\subseteq H\subseteq V^{*}$ be a Gelfand triple. We consider the following general nonlinear evolution equation
\begin{align}
u'(t) & =A(t,u(t)),\quad\forall0<t<T,\label{eq:app_e_e}\\
u(0) & =u_{0}\in H,\nonumber
\end{align}
where $T>0$, $u'$ is the generalized derivative of $u$ on $(0,T)$ and $A:[0,T]\times V\rightarrow V^{*}$ is restrictedly measurable, $i.e.$ for each
$dt$-version of $u\in L^{1}([0,T];V)$, $t\mapsto A(\cdot,u(\cdot))$ is $V^{*}$-measurable on $[0,T]$.

Suppose that for some $\alpha>1,\beta\ge0$ there exist constants $c>0$, $C \ge 0$ and functions $f,g\in L^{1}([0,T];\mathbb{R})$ such that the following
conditions hold for all $t\in[0,T]$ and $v,v_{1},v_{2}\in V$:
\begin{enumerate}
\item $(H1)$ (Hemicontinuity) The map $s\mapsto{}_{V^{*}}\<A(t,v_{1}+sv_{2}),v\>_{V}$ is continuous on $\mathbb{R}$.
\item $(H2)$ (Local monotonicity)
\[
2{}_{V^{*}}\<A(t,v_{1})-A(t,v_{2}),v_{1}-v_{2}\>_{V}\le\left(f(t)+\eta(v_{1})+\rho(v_{2})\right)\|v_{1}-v_{2}\|_{H}^{2},
\]
where $\eta,\rho:V\rightarrow[0,+\infty)$ are measurable and locally bounded functions.
\item $(H3)$ (Generalized coercivity)
\[
2{}_{V^{*}}\<A(t,v),v\>_{V}\le-c\|v\|_{V}^{\alpha}+g(t)\|v\|_{H}^{2}+f(t).
\]

\item $(H4)$ (Growth)
\[
\|A(t,v)\|_{V^{*}}^{\frac{\alpha}{\alpha-1}}\le\bigg(f(t)+C\|v\|_{V}^{\alpha}\bigg)\bigg(1+\|v\|_{H}^{\beta}\bigg).
\]
\end{enumerate}
\begin{thm}
\label{thm:var_ex} Suppose that $V\subseteq H$ is compact and $(H1)$--$(H4)$ hold. Then for any $u_{0}\in H$, \eqref{eq:app_e_e} has a solution $u$ on
$[0,T]$, i.e.
\[
u\in L^{\alpha}([0,T];V)\cap C([0,T];H),\ u'\in L^{\frac{\alpha}{\alpha-1}}([0,T];V^{*})
\]
and
\[
\<u(t),v\>_{H}=\<u_{0},v\>_{H}+\int_{0}^{t}{}_{V^{*}}\<A(s,u(s)),v\>_{V}ds,\quad\forall t\in[0,T],v\in V.
\]
Moreover, if there exist non-negative constants $C$, $\gamma$ such that
\begin{equation}
\eta(v)+\rho(v)\le C(1+\|v\|_{V}^{\alpha})(1+\|v\|_{H}^{\gamma}),\ v\in V,\label{c3}
\end{equation}
then the solution of \eqref{eq:app_e_e} is unique.\end{thm}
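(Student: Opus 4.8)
The plan is to construct a solution by a Galerkin approximation and to identify the limit by a local monotonicity (Minty--Browder) argument, with the compactness of $V\subseteq H$ entering decisively. First I would fix a system $(e_k)_{k\in\N}\subset V$ that is orthonormal in $H$ and whose span is dense in $V$, put $V_n:=\mathrm{span}\{e_1,\dots,e_n\}$ with $H$-orthogonal projection $P_n$, and consider the finite-dimensional problem $u_n'(t)=P_nA(t,u_n(t))$, $u_n(0)=P_nu_0$. The hemicontinuity $(H1)$ together with the growth bound $(H4)$ makes the right-hand side a Carath\'eodory vector field on $V_n$, so a local Carath\'eodory solution exists; the coercivity $(H3)$ then supplies the a priori bound that prevents blow-up and yields a solution on all of $[0,T]$.

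The a priori estimates come from testing with $u_n$ and using $(H3)$: applying the chain rule to $\|u_n(t)\|_H^2$ and Gronwall's lemma (to absorb $g(t)\|u_n\|_H^2$) gives a bound on $\sup_{t\in[0,T]}\|u_n(t)\|_H^2+\int_0^T\|u_n(t)\|_V^\alpha\,dt$ depending only on $\|u_0\|_H$, $f$, $g$. Because the $H$-norms are uniformly bounded, the factor $(1+\|u_n\|_H^\beta)$ in $(H4)$ is controlled, so $A(\cdot,u_n(\cdot))$ and hence $u_n'$ are bounded in $L^{\alpha/(\alpha-1)}([0,T];V^*)$. Along a subsequence I would then extract $u_n\rightharpoonup u$ weakly in $L^\alpha([0,T];V)$ and weak-$*$ in $L^\infty([0,T];H)$, and $A(\cdot,u_n)\rightharpoonup Y$ weakly in $L^{\alpha/(\alpha-1)}([0,T];V^*)$; a standard closedness argument identifies $u'=Y$, so that $u$ lies in the asserted space, admits a continuous $H$-representative and obeys the energy identity $\|u(t)\|_H^2=\|u_0\|_H^2+2\int_0^t{}_{V^*}\langle Y(s),u(s)\rangle_V\,ds$. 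Crucially, since $V\subseteq H$ is compact, the Aubin--Lions lemma (applied to the bounds on $u_n$ in $L^\alpha([0,T];V)$ and on $u_n'$ in $L^{\alpha/(\alpha-1)}([0,T];V^*)$) upgrades this to strong convergence $u_n\to u$ in $L^2([0,T];H)$, with $u_n(t)\to u(t)$ in $H$ for a.e.\ $t$ along a further subsequence.

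The heart of the argument, and the step I expect to be the main obstacle, is to identify $Y=A(\cdot,u)$ with only local monotonicity at hand. The coefficient $f(t)+\eta(v_1)+\rho(v_2)$ in $(H2)$ is not a fixed constant, and since $u$ lies in $L^\alpha([0,T];V)$ but not in $L^\infty([0,T];V)$ one cannot naively integrate the only locally bounded quantities $\eta(u(\cdot))$ or $\rho(u(\cdot))$ in time; this is exactly what obstructs a direct Minty argument. The remedy uses the strong $L^2([0,T];H)$-convergence just obtained: combining the two energy identities with $A(\cdot,u_n)\rightharpoonup Y$ and the weak lower semicontinuity of $\|\cdot\|_H$ at $t=T$ yields $\limsup_n\int_0^T{}_{V^*}\langle A(t,u_n),u_n-u\rangle_V\,dt\le0$. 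Testing $(H2)$ with the pair $(u_n,u)$ and integrating shows that the defect terms $\eta(u_n)\|u_n-u\|_H^2$ and $\rho(u)\|u_n-u\|_H^2$ vanish in the limit — this is where the strong $L^2(H)$-convergence together with a truncation on the sets $\{\|u_n\|_V>R\}$ (which carry negligible mass by the uniform $L^\alpha(V)$-bound) is needed to tame the locally bounded $\eta,\rho$. One thereby obtains $\int_0^T\langle A(t,u_n),u_n\rangle_V\,dt\to\int_0^T\langle Y,u\rangle_V\,dt$, and feeding this into $(H2)$ for a test path $w=u-\lambda z$ with $\lambda>0$, dividing by $\lambda$, letting $\lambda\downarrow0$ and invoking the hemicontinuity $(H1)$ gives ${}_{V^*}\langle Y(t)-A(t,u(t)),z(t)\rangle_V=0$ for a.e.\ $t$ and all admissible $z$, hence $Y=A(\cdot,u)$. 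This completes the existence part.

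For uniqueness under the additional hypothesis \eqref{c3}, let $u,v$ be two solutions with $u(0)=v(0)=u_0$. Applying the chain rule to $\|u(t)-v(t)\|_H^2$ and using $(H2)$ gives $\tfrac{d}{dt}\|u(t)-v(t)\|_H^2\le\big(f(t)+\eta(u(t))+\rho(v(t))\big)\|u(t)-v(t)\|_H^2$. The role of \eqref{c3} is precisely to make $t\mapsto f(t)+\eta(u(t))+\rho(v(t))$ integrable on $[0,T]$: from $\eta(w)+\rho(w)\le C(1+\|w\|_V^\alpha)(1+\|w\|_H^\gamma)$ together with $u,v\in L^\alpha([0,T];V)\cap L^\infty([0,T];H)$ one gets $\int_0^T\big(\eta(u)+\rho(v)\big)\,dt<\infty$. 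Gronwall's lemma then forces $\|u(t)-v(t)\|_H^2\le\|u(0)-v(0)\|_H^2\exp\!\big(\int_0^t(f+\eta(u)+\rho(v))\,ds\big)=0$ for all $t$, which is the claimed uniqueness.
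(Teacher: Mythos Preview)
The paper does not actually prove this theorem: its entire proof consists of the sentence ``The conclusions follow from a more general result in \cite{LR12} (see Theorem 1.1 and Remark 1.1(3)) or \cite[Theorem 5.2.2]{LR15}.'' Your proposal, by contrast, sketches the genuine argument behind those references: Galerkin approximation, a priori bounds from $(H3)$, weak compactness, Aubin--Lions (using the compactness of $V\subseteq H$) to upgrade to strong $L^2([0,T];H)$-convergence, and then a local Minty--Browder identification of the nonlinear limit, with Gronwall for uniqueness under \eqref{c3}. This is precisely the strategy of Liu--R\"ockner, so in substance your route and the paper's citation point to the same proof.

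One remark on the identification step, which you correctly flag as the main obstacle: your description of how the $\eta(u_n)\|u_n-u\|_H^2$ and $\rho(u)\|u_n-u\|_H^2$ terms are handled is a bit loose. Since $\rho$ is only locally bounded on $V$ and $u\in L^\alpha([0,T];V)$, the function $t\mapsto\rho(u(t))$ need not be integrable, so the ``truncation on $\{\|u_n\|_V>R\}$'' alone is not quite the right mechanism; one also needs to truncate in the $u$-variable (or, equivalently, work through the pseudo-monotonicity formulation as in \cite{LR12}). This is a point of detail rather than a genuine gap, and the overall structure of your argument is sound.
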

\begin{proof}
The conclusions follow from a more general result in \cite{LR12} (see Theorem 1.1 and Remark 1.1(3)) or \cite[Theorem 5.2.2]{LR15}.
\end{proof}

\section{Stochastic Flows and RDS}\label{app:rds}

We recall the framework of stochastic flows, random dynamical system (RDS) and random attractors. For more details we refer to \cite{A98,CDF97,CF94,S92}.
Let $(H,d)$ be a complete separable metric space and $(\O,\F,\P,\{\t_{t}\}_{t\in\R})$ be a metric dynamical system, i.e.\ $(t,\o)\mapsto\theta_{t}(\o)$
is ($\mcB(\R)\otimes\mcF,\mcF)$-measurable, $\theta_{0}=$ id, $\theta_{t+s}=\theta_{t}\circ\theta_{s}$ and $\theta_{t}$ is $\P$-preserving for all
$s,t\in\R$.
\begin{defn}
A family of maps $S(t,s;\o):H\to H$, $s\le t$ is said to be a stochastic flow, if for every $\o\in\O$
\begin{enumerate}
\item [i.] $S(s,s;\o)=id_{H}$, for all $s\in\R$.
\item [ii.] $S(t,s;\o)x=S(t,r;\o)S(r,s;\o)x$, for all $t\ge r\ge s$, $x\in H$.
\end{enumerate}
A stochastic flow $S(t,s;\o)$ is called
\begin{enumerate}
\item [iii.]measurable if $(t,s,\o,x)\to S(t,s;\o)x$ is measurable.
\item [iv.]continuous if $x\mapsto S(t,s;\o)x$ is continuous for all $s\le t$, $\o\in\O$.
\item [v.]a cocycle if $S(t,s;\o)x=S(t-s,0;\t_{s}\o)x,$ for all $x\in H$, $t\ge s$, $\o\in\O$.
\end{enumerate}
A measurable, cocycle stochastic flow is also called a random dynamical system (RDS).
\end{defn}
For a cocycle stochastic flow the notation of the initial time $s\in\R$ is redundant. Therefore, often the notation $\vp(t,\o):=S(t,0;\o)$ is chosen for
cocycles in the literature. Since all the results may be extended to a time-inhomogeneous setup (where $S(t,s;\o)$ is not a cocycle in general) we prefer
to use the notation $S(t,s;\o)$.%

\begin{defn}
A function $f : \R\to\R_{+}$ is said to be
\begin{enumerate}
\item tempered if $\lim\limits _{r\to-\infty}f_{r}e^{\eta r}=0$ for all $\eta > 0$;
\item exponentially integrable if $ f \in L_{loc}^{1}(\R;\R_{+})$ and $\int_{-\infty}^{t}f_{r}e^{\eta r}dr < \infty$ for all $t\in\R$, $\eta > 0$.
\end{enumerate}
\end{defn}
Let us note that the product of two tempered functions is tempered and that the product of a tempered and an exponentially integrable function is exponentially
integrable if it is locally integrable.

In the following, let $S(t,s;\o)$ be a cocycle.
\begin{defn}
A family $\{D(\o)\}_{\o\in\O}$ of subsets of $H$ is said to be
\begin{enumerate}
\item a random closed set if it is $\P$-a.s.\ closed and $\o\to d(x,D(\o))$ is measurable for each $x\in H$. In this case we also call $D$ measurable.
    % \item right lower-semicontinuous if for each $t \in \R$, $\o \in \O$, $y \in D(t,\o)$ and $t_n %\downarrow t$ there is a sequence $y_n \in     D(t_n,\o)$ such that $y_n \to y$ or equivalently % $d(y,D(t_n,\o))\to 0$.

\item tempered if $t\mapsto\|D(\theta_{t}\o)\|_{H}$ is a tempered function for all $\o\in\O$ (assuming $H$ to be a normed space).
\item strictly stationary if $D(t,\o) = D(0,\t_t\o)$ for all $\o \in \O$, $t \in \R$.
\end{enumerate}
\end{defn}
From now on let $\mcD$ be a system of families $\{D(\o)\}_{\o\in\O}$ of subsets of $H$. For two subsets $A,B\subseteq H$ we define
\[
d(A,B):=\begin{cases}
\sup\limits _{a\in A}\inf\limits _{b\in B}d(a,b), & \text{ if }A\ne\emptyset;\\
\infty, & \text{ otherwise.}
\end{cases}
\]

\begin{defn}
\label{def:abs} A family $\{K(\o)\}_{\o\in\O}$ of subsets of $H$ is said to be
\begin{enumerate}
\item $\mcD$-absorbing, if there exists an absorption time $s_{0}=s_{0}(\o,D)$ such that
\[
S(0,s;\o)D(\theta_{s}\o)\subseteq K(\o),\quad\forall s\le s_{0}
\]
for all $D\in\mcD$ and $\o\in\O_{0}$, where $\O_{0}\subseteq\O$ is a subset of full $\P$-measure.
\item $\mcD$-attracting, if
\[
d(S(0,s;\o)D(\theta_{s}\o),K(\o))\to0,\quad s\to-\infty
\]
for all $D\in\mcD$ and $\o\in\O_{0}$, where $\O_{0}\subseteq\O$ is a subset of full $\P$-measure.
\end{enumerate}
\end{defn}

\begin{defn}
\label{def:asympt_cmpt_flow} A cocycle $S(t,s;\o)$ is called
\begin{enumerate}
\item $\mcD$-asymptotically compact if there is a random, compact, $\mcD$-attracting set $\{K(\o)\}_{\o\in\O}$ .
\item compact if for all $t>s$, $\o\in\O$ and $B\subseteq H$ bounded, $S(t,s;\o)B$ is precompact in $H$.
\end{enumerate}
\end{defn}
We define the $\O$-limit set by
\[
\O(D;\o):=\bigcap_{r<0}\overline{\bigcup_{\tau<r}S(0,\tau;\o)D(\theta_{\tau}\o)},
\]
and one can show that (cf.\cite{CF94})
\[
\O(D;\o)=\{x\in H|\ \exists s_{n}\to-\infty,\ x_{n}\in D(\theta_{s_{n}}\o)\text{ such that }S(0,s_{n};\o)x_{n}\to x\}.
\]

\begin{defn}
Let $S(t,s;\o)$ be a cocycle. A random closed set $\{\mcA(\o)\}_{\o\in\O}$ is called a $\mcD$-random attractor for $S(t,s;\o)$ if it satisfies $\P$-a.s.
\begin{enumerate}
\item $\mcA(\o)$ is nonempty and compact.
\item $\mcA$ is $\mcD$-attracting.
\item $\mcA(\o)$ is invariant under $S(t,s;\o)$, i.e. for each $s\le t$
\[
S(t,s;\o)\mcA(\theta_{s}\o)=\mcA(\theta_{t}\o).
\]

\end{enumerate}
\end{defn}
The following theorem gives a sufficient condition for the existence of a random attractor (cf. e.g. \cite{CF94}). Let $o\in H$ be an arbitrary point in
$H$.
\begin{thm}
\label{thm:suff_cond_attr} Let $S(t,s;\o)$ be a continuous, $\mcD$-asymptotically compact cocycle and let $K$ be a corresponding random, compact,
$\mcD$-attracting set. Then
\[
\mcA(\o):=\begin{cases}
\overline{\bigcup_{D\in\mcD}\O(D;\o)} & ,\text{ if }\o\in\O_{0};\\
\{o\} & ,\text{ otherwise.}
\end{cases}
\]
defines a random $\mcD$-attractor for $S(t,s;\o)$ and $\mcA(\o)\subseteq K(\o)\cap\O(K;\o)$ for all $\o\in\O_{0}$ (where $\O_{0}$ is as in Definition
\ref{def:abs}).
\end{thm}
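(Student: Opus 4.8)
The plan is to verify the three defining properties of a random $\mcD$-attractor together with the stated inclusion, working throughout with the pointwise description $\O(D;\o)=\{x : \exists\, s_{n}\to-\infty,\ x_{n}\in D(\theta_{s_{n}}\o),\ S(0,s_{n};\o)x_{n}\to x\}$ recorded above and with the compactness of $K(\o)$. All assertions are only required to hold for $\o\in\O_{0}$, where $\O_{0}$ is the full-measure set of Definition \ref{def:abs}.

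First I would establish the inclusion $\mcA(\o)\subseteq K(\o)\cap\O(K;\o)$, as this simultaneously delivers compactness. For $x\in\O(D;\o)$ choose $s_{n}\to-\infty$ and $x_{n}\in D(\theta_{s_{n}}\o)$ with $S(0,s_{n};\o)x_{n}\to x$; the $\mcD$-attraction of $K$ gives $d(S(0,s_{n};\o)x_{n},K(\o))\to0$, and closedness of $K(\o)$ forces $x\in K(\o)$. To place $x$ in $\O(K;\o)$ as well, I would for each fixed $r>0$ split $S(0,s_{n};\o)x_{n}=S(0,-r;\o)\,S(-r,s_{n};\o)x_{n}$ and rewrite, using the cocycle property, $S(-r,s_{n};\o)x_{n}=S(0,s_{n}+r;\theta_{-r}\o)x_{n}$ with $x_{n}\in D(\theta_{s_{n}+r}(\theta_{-r}\o))$ and $s_{n}+r\to-\infty$; asymptotic compactness then yields a subsequential limit $v_{r}\in\O(D;\theta_{-r}\o)\subseteq K(\theta_{-r}\o)$, and continuity of $S(0,-r;\o)$ gives $x=S(0,-r;\o)v_{r}$. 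Letting $r\to\infty$ exhibits $x$ exactly in the form demanded by the characterization of $\O(K;\o)$, so $\O(D;\o)\subseteq K(\o)\cap\O(K;\o)$. Since $\O(K;\o)$ is closed and contained in the compact set $K(\o)$, taking closures yields $\mcA(\o)\subseteq K(\o)\cap\O(K;\o)$ and the compactness of $\mcA(\o)$.

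Next I would prove $\mcD$-attraction and nonemptiness by the same extraction scheme. For attraction, assuming the contrary produces $\epsilon>0$, $s_{n}\to-\infty$ and $x_{n}\in D(\theta_{s_{n}}\o)$ with $d(S(0,s_{n};\o)x_{n},\mcA(\o))\ge\epsilon$; since $K$ attracts and $K(\o)$ is compact, a subsequence of $S(0,s_{n};\o)x_{n}$ converges to some $k$, which by the characterization lies in $\O(D;\o)\subseteq\mcA(\o)$, and the $1$-Lipschitz continuity of $d(\cdot,\mcA(\o))$ forces $d(S(0,s_{n_{k}};\o)x_{n_{k}},\mcA(\o))\to0$, a contradiction. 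Nonemptiness of $\O(D;\o)$ for any nonempty $D\in\mcD$ follows from the same convergent-subsequence argument, whence $\mcA(\o)\neq\emptyset$ on $\O_{0}$.

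The main work is the invariance $S(t,s;\o)\mcA(\theta_{s}\o)=\mcA(\theta_{t}\o)$, which by the cocycle property reduces to $S(t,0;\o)\mcA(\o)=\mcA(\theta_{t}\o)$. For ``$\subseteq$'' I would take $x\in\O(D;\o)$ with $S(0,s_{n};\o)x_{n}\to x$, apply continuity of $S(t,0;\o)$, and rewrite $S(t,0;\o)S(0,s_{n};\o)x_{n}=S(t,s_{n};\o)x_{n}=S(0,s_{n}-t;\theta_{t}\o)x_{n}$ with $x_{n}\in D(\theta_{s_{n}-t}(\theta_{t}\o))$, identifying the limit as an element of $\O(D;\theta_{t}\o)\subseteq\mcA(\theta_{t}\o)$; the general case follows by continuity and closedness of $\mcA(\theta_{t}\o)$. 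The reverse inclusion ``$\supseteq$'' is the crux: given $y\in\O(D;\theta_{t}\o)$ with $S(0,s_{n};\theta_{t}\o)x_{n}\to y$, I would write $S(0,s_{n};\theta_{t}\o)x_{n}=S(t,s_{n}+t;\o)x_{n}=S(t,0;\o)z_{n}$ where $z_{n}:=S(0,s_{n}+t;\o)x_{n}$, $x_{n}\in D(\theta_{s_{n}+t}\o)$ and $s_{n}+t\to-\infty$, then use asymptotic compactness to extract $z_{n_{k}}\to z\in\O(D;\o)\subseteq\mcA(\o)$ and continuity of $S(t,0;\o)$ to conclude $y=S(t,0;\o)z$. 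Extracting the convergent subsequence of the \emph{preimages} $z_{n}$ and matching it through continuity with the limit $y$ of the forward orbit is where the asymptotic-compactness hypothesis is genuinely needed; together with the compactness of $S(t,0;\o)\mcA(\o)$ this yields both inclusions.

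Finally, for the random-closed-set (measurability) requirement I would observe that whenever $K\in\mcD$ — which is the case in the applications of this paper, where $K$ is tempered — the first step upgrades $\mcA(\o)\subseteq\O(K;\o)$ to the equality $\mcA(\o)=\O(K;\o)$, since then $\O(K;\o)\subseteq\bigcup_{D\in\mcD}\O(D;\o)$. This reduces measurability of $\mcA$ to that of the single $\O$-limit set $\O(K;\cdot)$, which is built from countably many operations on the measurable maps $S$ and $K$ and is standard (cf.\ \cite{CF94,CDF97}); the extension to $\{o\}$ off $\O_{0}$ is measurable trivially. I expect the invariance ``$\supseteq$'' step to be the principal obstacle, with the measurability bookkeeping a secondary one.
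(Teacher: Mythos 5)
The paper does not actually prove Theorem \ref{thm:suff_cond_attr}: it recalls it as a known sufficient criterion and points to \cite{CF94}, so the only meaningful comparison is with the classical Crauel--Flandoli argument. Your proof is correct and is essentially that argument: the inclusion $\O(D;\omega)\subseteq K(\omega)$ via attraction plus closedness, the factorization $S(0,s_{n};\omega)x_{n}=S(0,-r;\omega)S(0,s_{n}+r;\theta_{-r}\omega)x_{n}$ to land in $\O(K;\omega)$, the subsequence-extraction proofs of nonemptiness and $\mcD$-attraction, and in particular the invariance inclusion ``$\supseteq$'' by extracting a convergent subsequence of the preimages $z_{n}=S(0,s_{n}+t;\omega)x_{n}$ and pushing the limit through the continuous map $S(t,0;\omega)$, with compactness of $S(t,0;\omega)\mcA(\omega)$ supplying closedness --- all checks of the cocycle identities you use (e.g.\ $S(-r,s_{n};\omega)=S(0,s_{n}+r;\theta_{-r}\omega)$ and $S(0,s_{n};\theta_{t}\omega)=S(t,0;\omega)S(0,s_{n}+t;\omega)$) are valid under the paper's Definition B.1.

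Two caveats, neither fatal. First, your appeals to attraction at the shifted fibers $\theta_{-r}\omega$ (for the $\O(K;\omega)$ inclusion) and $\theta_{t}\omega$ (for invariance) tacitly require $\theta_{-r}\omega,\theta_{t}\omega\in\O_{0}$, whereas Definition \ref{def:abs} only provides a full-measure $\O_{0}$ with no invariance; for the $\P$-a.s.\ statements in the definition of an attractor this is repaired by restricting $r$ to integers and intersecting countably many shifts of $\O_{0}$ (or by noting that in this paper $\O_{0}$ is produced $\theta$-invariant, cf.\ Theorem \ref{thm:nonlinear_OU}(v),(vi)), but the claim ``for all $\o\in\O_{0}$'' as literally stated needs this remark. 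Second, your measurability argument is conditional on $K\in\mcD$ (to upgrade $\mcA(\omega)\subseteq\O(K;\omega)$ to equality), which is not a hypothesis of the theorem; in the paper's applications $K$ is indeed tempered, but the unconditional statement requires the measurability machinery of \cite{CF94} for $\O$-limit sets of random compact sets, which you invoke only by citation. Since the paper itself delegates the entire proof to the literature, these are gaps of bookkeeping rather than of substance.
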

Now we introduce the notion of (stationary) conjugation mappings and conjugated stochastic flows (cf.\ \cite{K01,IL01}).
\begin{defn}
Let $(H,d)$ and $(\td H,\td d)$ be two metric spaces.
\begin{enumerate}
\item A family of homeomorphisms $\mcT=\{T(\o):H\to\td H\}_{\o\in\O}$ such that the maps $\o\mapsto T(\o)x$ and $\o\mapsto T^{-1}(\o)y$ are measurable
    for all $x\in H,y\in\td H$, is called a stationary conjugation mapping. We set $T(t,\o):=T(\t_{t}\o)$.
\item Let $Z(t,s;\o),S(t,s;\o)$ be cocycles. $Z(t,s;\o)$ and $S(t,s;\o)$ are said to be stationary conjugated, if there is a stationary conjugation
    mapping $\mcT$ such that
\[
S(t,s;\o)=T(t,\o)\circ Z(t,s;\o)\circ T^{-1}(s,\o).
\]

\end{enumerate}
\end{defn}
It is easy to show that stationary conjugation mappings preserve the stochastic flow and cocycle property.
\begin{prop}
\label{prop:def_conj_flow} Let $\mcT$ be a stationary conjugation mapping and $Z(t,s;\o)$ be a continuous cocycle. Then
\[
S(t,s;\o):=T(t,\o)\circ Z(t,s;\o)\circ T^{-1}(s,\o)
\]
defines a conjugated continuous cocycle.
\end{prop}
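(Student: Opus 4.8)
The plan is to verify, for the map $S(t,s;\o):=T(t,\o)\circ Z(t,s;\o)\circ T^{-1}(s,\o)$, each defining property of a continuous cocycle: the two stochastic-flow axioms, continuity in the initial datum, joint measurability, and the cocycle identity. Once $S$ is known to be a cocycle, the assertion that it is \emph{conjugated} to $Z$ is immediate, since the defining formula is precisely the relation in the definition of stationary conjugation with the given $\mcT$.

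First I would dispose of the purely algebraic identities. Using $Z(s,s;\o)=\mathrm{id}_{H}$ together with $T^{-1}(s,\o)=T(s,\o)^{-1}$ gives $S(s,s;\o)=\mathrm{id}_{H}$. For the flow property, for $t\ge r\ge s$ the inner pair $T^{-1}(r,\o)\circ T(r,\o)$ cancels, whence
\[
S(t,r;\o)\circ S(r,s;\o)=T(t,\o)\circ Z(t,r;\o)\circ Z(r,s;\o)\circ T^{-1}(s,\o)=T(t,\o)\circ Z(t,s;\o)\circ T^{-1}(s,\o)=S(t,s;\o),
\]
using the flow property of $Z$. For the cocycle identity I would invoke $T(t,\o)=T(\t_t\o)$: since $\t_{t-s}\circ\t_s=\t_t$ and $\t_0=\mathrm{id}$, we have $T(t-s,\t_s\o)=T(t,\o)$ and $T^{-1}(0,\t_s\o)=T^{-1}(s,\o)$, while $Z(t-s,0;\t_s\o)=Z(t,s;\o)$ by the cocycle property of $Z$; composing yields $S(t-s,0;\t_s\o)=S(t,s;\o)$. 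Continuity of $x\mapsto S(t,s;\o)x$ is clear, being a composition of the continuous maps $T^{-1}(s,\o)$ and $Z(t,s;\o)$ with the homeomorphism $T(t,\o)$.

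The only point requiring genuine care, and the main obstacle, is joint measurability of $(t,s,\o,x)\mapsto S(t,s;\o)x$, since the factors $T$, $Z$, $T^{-1}$ are only assumed separately measurable in $\o$ and continuous (respectively measurable/continuous) in the spatial variable. I would argue via Carath\'eodory maps: because $(t,\o)\mapsto\t_t\o$ is $(\mcB(\R)\otimes\mcF,\mcF)$-measurable and $\o\mapsto T(\o)x$ is measurable for each fixed $x$, the map $(t,\o)\mapsto T(t,\o)x$ is measurable for each $x$; combined with continuity of $x\mapsto T(\o)x$ this makes $(t,\o,x)\mapsto T(t,\o)x$ jointly measurable, and similarly for $(s,\o,y)\mapsto T^{-1}(s,\o)y$ and for $(t,s,\o,y)\mapsto Z(t,s;\o)y$ (measurable by hypothesis, continuous in $y$). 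Substituting the measurable map $(s,\o,x)\mapsto T^{-1}(s,\o)x$ as the spatial argument of the Carath\'eodory map $Z$ produces a measurable map, and a further substitution into $T$ gives measurability of $S$; the substitution step is justified by the standard fact that composing a Carath\'eodory function with a measurable function is measurable (approximate the inner argument by countably-valued maps and pass to the limit using continuity in the spatial variable). With joint measurability in hand, $S$ is a measurable stochastic flow obeying the cocycle identity, hence a continuous cocycle, and it is stationary conjugated to $Z$ via $\mcT$, which completes the proof.
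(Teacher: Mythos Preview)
Your proof is correct and follows the same direct-verification approach the paper has in mind; in fact the paper gives no proof at all for this proposition, merely remarking beforehand that ``it is easy to show that stationary conjugation mappings preserve the stochastic flow and cocycle property.'' Your algebraic verifications of the flow identity, the cocycle identity, and continuity are exactly what is intended.

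One small remark: the measurability argument you give, while correct, is more than the proposition requires. In the paper's terminology a \emph{continuous cocycle} is a stochastic flow satisfying properties (iv) and (v) of the definition; measurability (property (iii)) is a separate notion, and indeed in Theorem~\ref{thm:generation} the paper establishes measurability of $S$ by a separate argument after invoking this proposition. So your Carath\'eodory discussion could be omitted without loss, though it does no harm.
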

The existence of a random attractor is preserved under conjugation.
\begin{thm}
\label{thm:conj_attractor} Let $S(t,s;\o)$ and $Z(t,s;\o)$ be cocycles conjugated by a stationary conjugation mapping $\mcT$ consisting of uniformly
continuous mappings $T(\o):H\to H$. Assume that there is a $\tilde{\mcD}$-attractor $\tilde{\mcA}$ for $Z(t,s;\o)$ and let
\[
\mcD:=\big\{\{T(\o)\td D(\o)\}_{\o\in\O}|\ \td D\in\td\mcD\big\}.
\]
Then $\mcA(\o):=T(\o)\tilde{\mcA}(\o)$ is a random $\mcD$-attractor for $S(t,s;\o)$.
\end{thm}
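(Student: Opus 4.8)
The plan is to verify directly that $\mcA(\o):=T(\o)\tilde{\mcA}(\o)$ satisfies the four requirements of a random $\mcD$-attractor: that it is a random closed set, that $\mcA(\o)$ is nonempty and compact, that it is invariant under $S(t,s;\o)$, and that it is $\mcD$-attracting. Everything is driven by the single algebraic identity coming from the conjugation $S(t,s;\o)=T(t,\o)\circ Z(t,s;\o)\circ T^{-1}(s,\o)$ together with the stationarity $T(t,\o)=T(\t_t\o)$: for any family $\td D$ one has $T^{-1}(s,\o)\big(T(\t_s\o)\td D(\t_s\o)\big)=\td D(\t_s\o)$, so the conjugation maps cancel precisely when the initial datum already lies in the range of $T(s,\o)$, which is exactly the case for families in $\mcD$.

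First I would dispose of the elementary properties. Since $T(\o)\colon H\to H$ is a homeomorphism and $\tilde{\mcA}(\o)$ is $\P$-a.s.\ nonempty and compact, its continuous image $\mcA(\o)=T(\o)\tilde{\mcA}(\o)$ is again nonempty and compact, hence closed. For invariance I would use $\mcA(\t_s\o)=T(\t_s\o)\tilde{\mcA}(\t_s\o)=T(s,\o)\tilde{\mcA}(\t_s\o)$ and the cancellation identity to compute
\[
S(t,s;\o)\mcA(\t_s\o) = T(t,\o)\,Z(t,s;\o)\,T^{-1}(s,\o)T(s,\o)\tilde{\mcA}(\t_s\o) = T(t,\o)\,Z(t,s;\o)\tilde{\mcA}(\t_s\o),
\]
and then invoke the invariance $Z(t,s;\o)\tilde{\mcA}(\t_s\o)=\tilde{\mcA}(\t_t\o)$ of the $Z$-attractor to obtain $T(t,\o)\tilde{\mcA}(\t_t\o)=\mcA(\t_t\o)$, as required.

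The main step is the attraction property. Let $D\in\mcD$, so $D(\o)=T(\o)\td D(\o)$ for some $\td D\in\td\mcD$. Using $T(0,\o)=T(\o)$ and the cancellation identity I would rewrite
\[
S(0,s;\o)D(\t_s\o) = T(\o)\,Z(0,s;\o)\,T^{-1}(s,\o)T(s,\o)\td D(\t_s\o) = T(\o)\,Z(0,s;\o)\td D(\t_s\o),
\]
while $\mcA(\o)=T(\o)\tilde{\mcA}(\o)$. Since $\tilde{\mcA}$ is a $\td\mcD$-attractor, $d\big(Z(0,s;\o)\td D(\t_s\o),\tilde{\mcA}(\o)\big)\to 0$ as $s\to-\infty$ on a set of full measure. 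It then remains to show that uniform continuity of $T(\o)$ transports convergence of the Hausdorff semidistance through $T(\o)$: given $\varepsilon>0$, uniform continuity provides $\delta>0$ with $d(x,y)<\delta\Rightarrow d(T(\o)x,T(\o)y)<\varepsilon$; for $s$ so small that $d(Z(0,s;\o)\td D(\t_s\o),\tilde{\mcA}(\o))<\delta$, every point $a$ of $Z(0,s;\o)\td D(\t_s\o)$ admits some $\tilde a\in\tilde{\mcA}(\o)$ with $d(a,\tilde a)<\delta$, whence $d(T(\o)a,T(\o)\tilde a)<\varepsilon$ and, taking the supremum over $a$, $d\big(T(\o)Z(0,s;\o)\td D(\t_s\o),\mcA(\o)\big)\le\varepsilon$. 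This yields $d\big(S(0,s;\o)D(\t_s\o),\mcA(\o)\big)\to 0$, i.e.\ $\mcD$-attraction.

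Finally I would address measurability of $\{\mcA(\o)\}_{\o\in\O}$. Representing the random compact set $\tilde{\mcA}$ through a countable family of measurable selections $v_n$ with $\tilde{\mcA}(\o)=\overline{\{v_n(\o):n\in\N\}}$, the continuity of $T(\o)$ gives $\mcA(\o)=\overline{\{T(\o)v_n(\o):n\in\N\}}$, so that $d(x,\mcA(\o))=\inf_n d(x,T(\o)v_n(\o))$ is measurable in $\o$; here each $\o\mapsto T(\o)v_n(\o)$ is measurable because the defining measurability of the stationary conjugation mapping, together with continuity of $T(\o)$ in its spatial argument, makes $(\o,x)\mapsto T(\o)x$ jointly measurable. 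I expect the only genuinely delicate point to be the transfer of Hausdorff-semidistance convergence in the attraction step: it is the \emph{uniform} (rather than merely pointwise) continuity of the maps $T(\o)$ that makes the bound hold simultaneously over the whole absorbed family, and this is exactly where the hypothesis on $\mcT$ is used.
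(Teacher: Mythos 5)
Your proof is correct. A point of comparison: the paper itself does not prove Theorem \ref{thm:conj_attractor} at all --- it states the result and refers to the literature on conjugated cocycles (\cite{K01,IL01}) --- so your direct verification supplies exactly the argument the paper leaves implicit, and it is the standard one. All four ingredients check out: the cancellation identity $T^{-1}(s,\o)T(\t_s\o)=\mathrm{id}$ correctly reduces both the invariance and the attraction computations to the corresponding properties of $Z$ and $\tilde{\mcA}$; the transfer of the semidistance bound through $T(\o)$ is sound, and you correctly identify that \emph{uniform} continuity is indispensable there, since the sets $Z(0,s;\o)\td D(\t_s\o)$ need not be compact or bounded, so pointwise continuity of $T(\o)$ would not give a uniform estimate over all points of the attracted set (this is precisely why the theorem's hypothesis is stated as it is, and note you only ever need uniform continuity of $T(\o)$, not of $T^{-1}(\o)$, consistent with the hypothesis). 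The measurability step via a Castaing representation $\tilde{\mcA}(\o)=\overline{\{v_n(\o)\}}$ also works: your inclusion $\overline{\{T(\o)v_n(\o)\}}=T(\o)\tilde{\mcA}(\o)$ uses compactness of $T(\o)\tilde{\mcA}(\o)$ for one direction and continuity for the other, and joint measurability of $(\o,x)\mapsto T(\o)x$ follows by the Carath\'eodory argument (measurable in $\o$ by the definition of a stationary conjugation mapping, continuous in $x$, $H$ separable). The only cosmetic remark is that all properties need only hold on a $\t$-invariant full-measure set inherited from $\tilde{\mcA}$, which your argument respects since every step is performed $\o$-wise.
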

We will require the following strong notion of stationarity:
\begin{defn}
\label{def:strict_stat} A map $X:\R\times\O\to H$ is said to satisfy (crude) strict stationarity, if
\[
X(t,\o)=X(0,\t_{t}\o)
\]
for all $\o\in\O$ and $t\in\R$ (for all $t\in\R$, $\P$-a.s., where the zero-set may depend on $t$ resp.).
\end{defn}
As $\P$ is $\t$-invariant, crude strict stationarity implies stationarity of the law. Objects obtained as limits in $L^{2}(\O)$ or limits in
probability usually only satisfy crude strict stationarity. Thus one needs the existence of selections of indistinguishable strictly stationary
versions. The following Proposition provides these and is an easy adaption of \cite[Proposition 2.8]{L01}.
\begin{prop}
\label{prop:stat_perfect} Let $V\subseteq H$ and $X:\R\times\O\to H$ be a process satisfying crude stationarity. Assume that $X_{\cdot}\in
D(\R;H)\cap L_{loc}^{\a}(\R;V)$ for some $\a\ge1$, $\P$-a.s. Then there exists a process $\tilde{X}:\R\times\O\to H$ such that
\begin{enumerate}
\item $\tilde{X}_{\cdot}(\o)\in D(\R;H)\cap L_{loc}^{\a}(\R;V)$ for all $\o\in\O$.
\item $X$, $\tilde{X}$ are indistinguishable, i.e.
\[
\P[X_{t}\ne\tilde{X}_{t}\text{ for some }t\in\R]=0,
\]
with a $\theta$-invariant exceptional set.
\item $\tilde{X}$ is strictly stationary.
\end{enumerate}
\end{prop}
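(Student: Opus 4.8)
The plan is to carry out the standard \emph{perfection} procedure: crude stationarity furnishes, for each fixed $t$, an identity $X_{t}=X_{0}(\t_{t}\cdot)$ holding $\P$-a.s.\ but with a $t$-dependent exceptional set, and the goal is to produce an indistinguishable version for which this identity holds simultaneously for all $t$ and all $\o$ in a $\t$-invariant set of full measure, while preserving the path regularity. Following the adaptation of \cite[Proposition 2.8]{L01}, I would construct $\tilde{X}$ from a time-zero snapshot transported along the flow. Set
\[
Y(\o):=\lim_{q\downarrow0,\,q\in\mathbb{Q}}X_{q}(\o)
\]
whenever this limit exists in $H$, and $Y(\o):=0$ otherwise; then $Y$ is $\mcF$-measurable, and on the full-measure set $G$ where $X_{\cdot}(\o)\in D(\R;H)\cap L_{loc}^{\a}(\R;V)$ the right-continuity of the path gives $Y=X_{0}$. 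Finally define $\tilde{X}_{t}(\o):=Y(\t_{t}\o)$.

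With this definition strict stationarity is immediate and holds for every $\o$ and every $t$, since $\tilde{X}_{t}(\t_{r}\o)=Y(\t_{t+r}\o)=\tilde{X}_{t+r}(\o)$, so in particular $\tilde{X}_{t}(\o)=\tilde{X}_{0}(\t_{t}\o)$. To obtain that $\tilde{X}$ is a modification of $X$ I would fix $t$ and chain crude stationarity at the rational times $q$ and at the times $q+t$: outside a null set one has $X_{q}(\t_{t}\o)=X_{0}(\t_{q+t}\o)=X_{q+t}(\o)$ simultaneously for all $q\in\mathbb{Q}$, and letting $q\downarrow0$ together with right-continuity of $X_{\cdot}(\o)$ yields $\tilde{X}_{t}(\o)=X_{t}(\o)$ $\P$-a.s. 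Once the \cadlag\ property of $\tilde{X}$ is available, two \cadlag\ modifications that agree at each fixed time are indistinguishable (compare on $\mathbb{Q}$ and extend by right-continuity), and the exceptional set may be chosen $\t$-invariant.

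The main obstacle, and the technical core of the argument, is to upgrade the path regularity of $\tilde{X}$ from ``almost surely'' to ``for every $\o$ in a $\t$-invariant full-measure set $\O_{0}$'', as this is the only setting in which the pointwise strict stationarity above is meaningful. I would produce $\O_{0}$ by a Fubini argument: writing $F:=\{(s,\o):\t_{s}\o\notin G\}$, each $s$-section of $F$ is $\P$-null by the $\t$-invariance of $\P$, so for $\P$-a.e.\ $\o$ the section $\{s:\t_{s}\o\notin G\}$ is Lebesgue-null; the collection $\O_{0}$ of such $\o$ has full measure and, crucially, is $\t$-invariant because Lebesgue measure is translation invariant. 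On $\O_{0}$ the orbit $s\mapsto\t_{s}\o$ meets $G$ for almost every shift, and combining this with the dense-time identity $\tilde{X}_{q}=X_{q}$ and the right-limit definition of $Y$ forces $t\mapsto\tilde{X}_{t}(\o)$ to equal the \cadlag\ representative pinned down by the rationals and to lie in $L_{loc}^{\a}(\R;V)$; off $\O_{0}$ I would set $\tilde{X}\equiv0$, which is admissible and leaves $\t$-invariance of the construction intact. I expect the $L_{loc}^{\a}(\R;V)$ statement to be the softer half, since that space identifies paths differing on a Lebesgue-null set of times, which the Fubini set already controls; the genuinely delicate point is the pathwise \cadlag\ perfection in $H$, where the failure of $\mathbb{Q}$ to be invariant under irrational shifts must be absorbed through the right-continuity structure rather than through the group action directly, and keeping track of the various (a priori non-invariant) exceptional sets and re-invariantizing them is precisely the bookkeeping that the adaptation of \cite{L01} supplies.
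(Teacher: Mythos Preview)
The paper does not give its own proof of this proposition; it simply states it as ``an easy adaption of \cite[Proposition 2.8]{L01}'' and leaves it at that. Your outline is precisely the standard perfection argument underlying that reference, and you correctly isolate the two essential ingredients: defining the candidate via $\tilde X_{t}(\o)=Y(\t_{t}\o)$ so that strict stationarity is automatic, and then producing a $\t$-invariant full-measure set $\O_{0}$ by the Fubini construction on which the path regularity survives.

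One minor remark on the construction: the route you sketch (take a right-limit snapshot $Y$ and transport it) and the route implicit in the reference (first form the set $\O_{0}=\{\o: X_{t}(\o)=X_{0}(\t_{t}\o)\text{ for a.e.\ }t\}$, then invariantize to $\O_{1}=\{\o:\t_{t}\o\in\O_{0}\text{ for a.e.\ }t\}$, and on $\O_{1}$ define $\tilde X_{t}(\o):=X_{t-s}(\t_{s}\o)$ for any $s$ with $\t_{s}\o\in\O_{0}\cap G$) are equivalent, but the second one makes the \cadlag\ claim slightly cleaner: one shows directly that the definition is independent of the choice of $s$ because two \cadlag\ functions agreeing almost everywhere agree everywhere, and then $\tilde X_{\cdot}(\o)$ inherits the full path regularity from the single representative $X_{\cdot}(\t_{s}\o)$. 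In your version you have to argue that the right limit defining $Y(\t_{t}\o)$ actually exists for \emph{every} $t$ (not just a.e.\ $t$) once $\o\in\O_{0}$, which amounts to the same observation but is a bit more oblique. Either way, you have identified the correct mechanism and the correct difficulty.
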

\begin{comment}
\begin{proof}
\com{need to be adapted from $C(\R;H)$ to the case $D(\R;H)$ }

We first note that we may change $X$ on a set of measure zero, so that $X_{\cdot}(\o)\in D(\R;H)\cap L_{loc}^{\a}(\R;V)$ for all $\o\in\O$. Let
\begin{align*}
\O_{0} & =\{\o\in\O|X_{t}(\o)=X_{0}(\t_{t}\o)\text{ for a.a. }t\},\\
\O_{1} & =\{\o\in\O|\t_{t}\o\in\O_{0}\text{ for a.a }t\}.
\end{align*}
\textbf{Step 1: }Show $\O_{0}\in\mcF$ and $\P(\O_{0})=1$

It is enough to show this for $\td\O_{0}:=\{\o\in\O|X_{t}(\o)=X_{0}(\t_{t}\o)\text{ for a.a. }t\}.$ ...\end{proof}
\end{comment}

\section*{Acknowledgements}
Supported in part by NSFC (11571147,11822106,11831014), NSF of Jiangsu Province
(BK20160004), and the PAPD of Jiangsu Higher Education Institutions, the Max-Planck Society through MPI MIS Leipzig, as well as the DFG through SFB-701, SFB-1283, IRTG 2235 and the BiBoS-Research Center.

The authors would like to thank Michael R\"ockner for valuable discussions and comments.

 \bibliographystyle{plain}
\bibliography{refs}
 % expects file "refs.bib"

\end{document}